\documentclass[12pt]{amsart}

\usepackage{amssymb, amscd, txfonts}
\usepackage{graphicx}

 
\numberwithin{equation}{section}

\sloppy

\newtheorem{theorem}{Theorem}[section]
\newtheorem{proposition}[theorem]{Proposition}
\newtheorem{lemma}[theorem]{Lemma}
\newtheorem{corollary}[theorem]{Corollary}

\theoremstyle{definition}
\newtheorem{definition}[theorem]{Definition}
\newtheorem{example}[theorem]{Example}

\theoremstyle{remark}
\newtheorem{remark}[theorem]{Remark}


\newcommand{\N}{\mathbb{N}}
\newcommand{\Z}{\mathbb{Z}}
\newcommand{\Zp}{\mathbb{Z}_{p}}

\newcommand{\Q}{\mathbb{Q}}
\newcommand{\R}{\mathbb{R}}
\newcommand{\C}{\mathbb{C}}

\newcommand{\proj}{{\mathbb P}}

\newcommand{\SL}{{\rm SL}_2(\mathbb{Z})}
\newcommand{\Mp}{{\rm Mp}_2(\mathbb{Z})}
\newcommand{\Or}{{\rm O}^+}
\newcommand{\Ost}{\widetilde{{\rm O}}^+}
\newcommand{\SOL}{{\rm SO}^+(L)}

\newcommand{\HM}{{\rm vol}_{HM}}
\newcommand{\HMOL}{{\rm vol}_{HM}({\rm O}(L))}
\newcommand{\HMOK}{{\rm vol}_{HM}({\rm O}(K))}
\newcommand{\HMp}{{\rm vol}_{HM}^{+}}
\newcommand{\HMOLp}{{\rm vol}_{HM}({\rm O}^{+}(L))}
\newcommand{\HMOKp}{{\rm vol}_{HM}({\rm O}^{+}(K))}
\newcommand{\DL}{\mathcal{D}_{L}}
\newcommand{\FL}{\mathcal{F}_{L}}
\newcommand{\FLcpt}{\bar{\mathcal{F}_{L}}}
\newcommand{\OAL}{{\rm O}(A_{L})}

\newcommand{\RI}{\mathcal{R}_{\textrm{I}}}
\newcommand{\RII}{\mathcal{R}_{\textrm{II}}}
\newcommand{\eL}{\varepsilon(L)}
\newcommand{\epL}{\varepsilon_{p}(L)}
\newcommand{\epjL}{\varepsilon_{p,j}(L)}
\newcommand{\epnuL}{\varepsilon_{p,\nu(p)}(L)}

\begin{document}

\title[]{On the Kodaira dimension of orthogonal modular varieties}
\author[]{Shouhei Ma}
\thanks{Supported by Grant-in-Aid for Scientific Research (S) 15H05738.} 
\address{Department~of~Mathematics, Tokyo~Institute~of~Technology, Tokyo 152-8551, Japan}
\email{ma@math.titech.ac.jp}
\keywords{} 
\maketitle 

\begin{abstract}
We prove that up to scaling there are only finitely many integral lattices $L$ of signature $(2, n)$ with $n\geq21$ or $n=17$ 
such that the modular variety defined by the orthogonal group of $L$ is not of general type. 
In particular, when $n\geq108$, 
every modular variety defined by an arithmetic group for a rational quadratic form of signature $(2, n)$ is of general type. 
We also obtain similar finiteness in $n\geq9$ for the stable orthogonal groups.  
As a byproduct we derive finiteness of lattices admitting reflective modular form of bounded vanishing order, 
which proves a conjecture of Gritsenko and Nikulin.  
\end{abstract}

\setcounter{tocdepth}{1}
\tableofcontents


\section{Main results}\label{sec:intro}

It is one of classical problems in the theory of modular forms of several variables 
to determine the birational type of arithmetic quotients of Hermitian symmetric domains. 
Tai \cite{Ta}, Freitag \cite{Fr} and Mumford \cite{Mu} proved that the Siegel modular variety $\mathcal{A}_g$ is of general type in $g\geq7$, 
which first revealed the phenomenon that in higher dimension, 
modular varieties would be often of general type even for basic class of arithmetic groups, 
hence unirational case should be rare. 
Our purpose is to address this problem for modular varieties of orthogonal type. 

Let $L$ be an integral lattice of signature $(2, n)$ and ${\rm O}(L)$ be its orthogonal group. 
The Hermitian symmetric domain ${\DL}$ of type IV attached to $L$ is defined as one of the two components of the space 
\begin{equation*}\label{eqn:def type IV domain}
\{  {\C}\omega \in {\proj}(L\otimes{\C}) \; | \; (\omega, \omega)=0, (\omega, \bar{\omega})>0  \} . 
\end{equation*}
Let ${\Or}(L)$ be the subgroup of ${\rm O}(L)$ preserving ${\DL}$. 
The quotient space 
\begin{equation*}\label{eqn:def FL}
{\FL} = {\Or}(L)\backslash{\DL} 
\end{equation*}
has the structure of a quasi-projective variety of dimension $n$. 
It is invariant under scaling of $L$. 

\begin{theorem}\label{main}
Up to scaling there are only finitely many integral lattices $L$ of signature $(2, n)$ with $n\geq21$ or $n=17$ 
such that ${\FL}$ is not of general type. 
In particular, when $n\geq108$, ${\FL}$ is always of general type. 
\end{theorem}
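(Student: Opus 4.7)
The plan is to apply the low-weight cusp form method of Gritsenko--Hulek--Sankaran in combination with Hirzebruch--Mumford volume estimates. Since $n\geq 17$, Tai-type criteria ensure that on a smooth toroidal compactification $\widetilde{\FL}$ of $\FL$ the singularities are canonical, so pluricanonical sections extend. The canonical divisor then has the form $K_{\widetilde{\FL}}\equiv n\lambda - B - R$, where $\lambda$ is the Hodge line bundle, $B$ the boundary divisor, and $R$ the ramification divisor supported on reflection hyperplanes. Hence to prove that $\FL$ is of general type it suffices to exhibit a single cusp form $F$ of weight $k<n$ that vanishes along $R$ with appropriate multiplicity: multiplying sections of $m\lambda^{\otimes n}$ vanishing on $B$ by powers of $F$ produces enough pluricanonical sections to give $h^0(mK)\sim m^n$.

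Existence of such an $F$ is forced once the Hirzebruch--Mumford volume $\HMOLp$ is sufficiently large. By Hirzebruch--Mumford proportionality,
\[
\dim S_k(\Or(L)) = \frac{2\,\HMOLp}{n!}\,k^n + O(k^{n-1}),
\]
whereas the vanishing conditions along $B$ and $R$ impose only $O(k^{n-1})$ linear constraints, controlled by the number of cusps of $\FL$ and by the count of short reflective vectors in $L$. Effective control of the latter is the role of the reflective modular form estimates developed in the paper, which also yield the Gritsenko--Nikulin finiteness for lattices admitting reflective forms of bounded vanishing order; these supply an explicit function $c(n)$ such that $\HMOLp \geq c(n)$ implies that the required cusp form exists and hence that $\FL$ is of general type.

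The theorem then splits into two parts. For fixed $n$ in the allowed range, only finitely many scaling-classes of integral lattices of signature $(2,n)$ can satisfy $\HMOLp < c(n)$: the HM volume factors as a product over primes of local densities $\epL$ times an archimedean $\Gamma$-factor, and boundedness of this product forces $|A_L|$ to be bounded, leaving only finitely many genera and hence finitely many scaling-classes. For the unconditional statement in $n\geq 108$, one verifies that the universal lower bound on $\HMOLp$ --- arising because the archimedean $\Gamma$-factor grows super-exponentially in $n$ while each $\epL$ is universally bounded below --- already exceeds $c(n)$ for every $L$ of signature $(2,n)$.

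The main obstacle I expect is the effective control of the $R$-contribution. The HM estimate is purely asymptotic in $k$, so to extract a cusp form of weight strictly less than $n$ one needs genuinely effective, not merely asymptotic, bounds on the vanishing conditions imposed by $R$. This is also why the intermediate range $n\in\{18,19,20\}$ is excluded from the finiteness statement: in that range the available bounds on short reflective vectors are not yet sharp enough to beat the $R$-contribution uniformly. Once the numerical inequality $\HMOLp \geq c(n)$ is established in the stated range of $n$, the finiteness and unconditional statements follow mechanically from the explicit growth of $c(n)$ against classical lattice-theoretic bounds.
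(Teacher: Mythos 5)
There is a genuine gap, and it lies exactly where you flag your "main obstacle": your strategy asks for a single cusp form $F$ of fixed weight $k<n$ vanishing along the ramification divisor $R$, and proposes to produce it from the Hirzebruch--Mumford asymptotic $\dim S_k \sim \frac{2}{n!}{\HMOLp}k^n$. That asymptotic holds as $k\to\infty$ with an uncontrolled $O(k^{n-1})$ error, so it says nothing about any fixed weight below $n$; no amount of effective bounding of the error term is known to rescue this, and the paper does not attempt it. Moreover, your count of the constraints is wrong in an essential way: to use $F^m$ in the pluricanonical construction one needs vanishing along $R$ to an order that grows linearly in the weight, so the conditions imposed by $R$ on forms of weight $ka$ number $\sum_{j<k/2}\dim M_{ka+2j}(\Gamma_i)\sim C'k^{n}$, not $O(k^{n-1})$. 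The entire content of \S\ref{sec:bigness setup}--\S\ref{sec:vol estimate II} is to show that the leading coefficient $C'$ (a sum of volume ratios ${\HMp}(L,K)$ over branch components, split and non-split) is strictly smaller than the leading coefficient of $\dim M_{ka}({\Or}(L))$ once the exponent $D(L)$ of $A_L$ is large; this is a genuine $k^n$-versus-$k^n$ comparison, not a codimension-one afterthought.

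The paper's resolution is to decouple the two obstructions rather than ask one form to do both jobs. The boundary obstruction $\Delta$ is killed by an \emph{explicitly constructed} nonzero cusp form of weight $n'<n$ for ${\Or}(L)$ --- obtained by lifting the ${\OAL}$-invariant combination $E_l^L E_6 - E_{l+6}^L$ of vector-valued Eisenstein series through the Gritsenko--Borcherds map, after reducing $L$ to an even lattice containing $2U$ --- with no vanishing along $R$ required. The reflective obstruction is then absorbed into the remaining weight: one shows $n''\mathcal{L}-B/2$ is \emph{big}, which is an asymptotic statement in the auxiliary multiple $k$ and hence is exactly where HM volumes legitimately apply (Proposition \ref{prop:big via HM volume} and the estimates culminating in \eqref{eqn:vol final}). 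Finiteness then follows since bigness fails only when $D(L)$ is below an explicit bound $g(n)(1+a^{-1})^{n-1}(n/2a)$, which bounds $|A_L|$ and leaves finitely many primitive lattices per $n$, and which tends to $0$ as $n\to\infty$. Finally, your explanation of the excluded range $18\leq n\leq 20$ is not correct: that range is excluded because the congruence $l+n/2\equiv 3 \bmod 4$ forces the explicit Eisenstein construction to produce a cusp form of weight $n/2+l+5$ with $l$ as in Table \ref{table l_0}, and this weight drops below $n$ precisely when $n\geq 21$ or $n=17$; it has nothing to do with counting short reflective vectors.
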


The proof is effective: 
we will derive an explicit bound $D(n)$ determined by $n$ such that 
for primitive lattices $L$ of signature $(2, n)$, 
${\FL}$ is of general type whenever the exponent $D(L)$ of its discriminant group $A_L$ satisfies $\sqrt{D(L)} \geq D(n)$. 
(Recall that the exponent of a finite abelian group is the maximal order of its elements.) 
Asymptotically, 
\begin{equation}\label{eqn:estimate intro}
D(n) \sim \frac{3^2 \cdot 2^{2n+11} \cdot \pi^{n/2+1} \cdot e^2 }{ \Gamma(n/2+1)}. 
\end{equation}
The absence of non-general type case in large $n$ is a consequence of the convergence $D(n)\to 0$. 
The bound $n\geq108$ is obtained by computing a variant of this estimate, rather than itself (\S \ref{ssec:bound of n}). 
In this way, the logic to deduce finiteness is to show, in a quantitative manner, 
that ${\FL}$ must be of general type if the primitive lattice $L$ is ``large'',  
measuring the size of $L$ by $n$ and $D(L)$. 

As for the non-existence in higher dimension, 
the case of full orthogonal group covers that of general arithmetic group. 

\begin{corollary}
Let $V$ be a rational quadratic space of signature $(2, n)$ with $n\geq108$ and 
$\Gamma$ be an arithmetic subgroup of ${\Or}(V)$. 
The quotient space $\Gamma\backslash\mathcal{D}_{V}$ is always of general type. 
\end{corollary}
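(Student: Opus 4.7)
The strategy is to reduce the corollary to Theorem~\ref{main} by comparing $\Gamma$ with the full orthogonal group of a suitable integral lattice in $V$. First I would fix any integral lattice $L \subset V$ of signature $(2,n)$; then $\DL = \mathcal{D}_V$, and by the very definition of an arithmetic subgroup of $\Or(V)$, $\Gamma$ is commensurable with $\Or(L)$. Setting $\Gamma' := \Gamma \cap \Or(L)$ gives a subgroup of finite index in both, and after shrinking if necessary I may assume $\Gamma'$ is normal in $\Gamma$. This produces two finite surjective maps
\[
\pi_L : \Gamma'\backslash\mathcal{D}_V \longrightarrow \FL,
\qquad
\pi_\Gamma : \Gamma'\backslash\mathcal{D}_V \longrightarrow \Gamma\backslash\mathcal{D}_V,
\]
with $\pi_\Gamma$ a finite Galois cover with deck group $\Gamma/\Gamma'$. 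Since $n\geq 108$, Theorem~\ref{main} guarantees that $\FL$ is of general type.

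The next step is to transport ``general type'' across the two maps. Passing to smooth projective models, $\pi_L$ is a finite surjective morphism, so its ramification divisor is effective and the pullback $\pi_L^{*}$ injects pluricanonical sections of $\FL$ into those of $\Gamma'\backslash\mathcal{D}_V$; hence the cover is of general type as well. For $\pi_\Gamma$, the pluricanonical sections on $\Gamma\backslash\mathcal{D}_V$ are identified with the $\Gamma/\Gamma'$-invariant pluricanonical sections on $\Gamma'\backslash\mathcal{D}_V$, and a standard averaging argument over the finite group $\Gamma/\Gamma'$ preserves the maximal growth rate $\asymp m^{n}$ of $h^0(mK)$. Combining both yields that $\Gamma\backslash\mathcal{D}_V$ is of general type.

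The step that I expect to require the most care is not conceptual but technical: for a modular variety, pluricanonical forms on the smooth open part correspond to modular forms of weight a multiple of $n$, but only those that extend across the Baily--Borel boundary and vanish to prescribed orders at the elliptic and reflection loci contribute. Matching these extension conditions under $\pi_L$ and $\pi_\Gamma$ is precisely the Freitag--Mumford--Tai bookkeeping that already underlies the proof of Theorem~\ref{main}; one absorbs every contribution from cusps, fixed points and ramification by working with a sufficiently divisible pluricanonical power, so no new input beyond Theorem~\ref{main} is needed.
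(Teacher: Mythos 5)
Your reduction has a genuine gap in the second map $\pi_\Gamma$. Passing from $\Gamma'\backslash\mathcal{D}_V$ to its finite quotient $\Gamma\backslash\mathcal{D}_V$ goes in the \emph{wrong} direction: being of general type is inherited by finite covers (pullback of pluricanonical forms is injective), but it does \emph{not} descend to finite quotients. The classical counterexample is a high-genus curve with $C/G\simeq\proj^1$, and the phenomenon is exactly what this paper spends \S4--\S6 fighting: the quotient map ramifies along the divisor $B$, and on a quotient one only gets $K_{X/G}\sim_{\Q} \pi_*$-image of $K_X$ \emph{minus} a ramification term. Concretely, your identification of pluricanonical sections on $\Gamma\backslash\mathcal{D}_V$ with $\Gamma/\Gamma'$-invariant pluricanonical sections on $\Gamma'\backslash\mathcal{D}_V$ fails whenever $\pi_\Gamma$ ramifies in codimension one: an invariant section of $mK$ upstairs descends only to a section of $m(K+\Delta)$ for an effective $\Delta$ supported on the branch locus, so the invariant subspace can have maximal growth while $h^0(mK_{\Gamma\backslash\mathcal{D}_V})$ does not. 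Averaging produces invariant sections upstairs, not sections downstairs, so it cannot close this gap.

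The fix is to avoid the descent altogether by choosing the lattice adapted to $\Gamma$ from the start, which is what the paper does: since $\Gamma$ is commensurable with $\Or(L_0)$ for any integral lattice $L_0\subset V$, the lattice $L$ generated by the (finite union of) $\Gamma$-translates of $L_0$ is again an integral lattice of signature $(2,n)$ and is stable under $\Gamma$, so $\Gamma\subset\Or(L)$. Then the single map $\Gamma\backslash\mathcal{D}_V\to\FL$ is finite surjective with the general-type variety ($\FL$, by Theorem \ref{main}) \emph{downstairs}, and only the easy direction --- pulling back pluricanonical forms along a generically finite dominant map --- is needed. Your first map $\pi_L$ and the surrounding bookkeeping are fine; it is only the quotient step that must be eliminated.
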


This holds because we can find a lattice $L\subset V$ that is stable under the action of $\Gamma$ 
and hence $\Gamma\backslash\mathcal{D}_{V}$ dominates $\mathcal{F}_{L}$, the latter being of general type. 

Another class of arithmetic groups that are often studied is the stable orthogonal groups ${\Ost}(L)$ for $L$ even, 
which is the kernel of ${\Or}(L)\to{\rm O}(A_L)$. 
The quotient ${\Ost}(L)\backslash{\DL}$ is a covering of ${\FL}$ (and changes under scaling). 
For them we obtain finiteness result in $n\geq9$. 

\begin{theorem}\label{stable}
There are only finitely many even lattices $L$ of signature $(2, n)$ with $n\geq9$ such that 
${\Ost}(L)\backslash{\DL}$ is not of general type. 
\end{theorem}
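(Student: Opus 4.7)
The plan is to apply the same machinery developed for Theorem \ref{main}, but with $\Ost(L)$ in place of $\Or(L)$. Two quantitative gains appear in the stable setting. First, the Hirzebruch-Mumford volume satisfies $\HM(\Ost(L))=|\OAL|\cdot\HMOLp$, which can be very large as a function of $L$. Second, the branch divisor on $\Ost(L)\backslash\DL$ is substantially smaller than the one on $\FL$: only $(-1)$-reflections $\sigma_r$ whose reflection vector $r$ has trivial image in the discriminant group $A_L$ belong to $\Ost(L)$, which forces severe restrictions on $r^2$. Both effects push the threshold at which the volume term dominates the obstructions down from the $n\geq 21$ of Theorem \ref{main} to $n\geq 9$.

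Concretely, I would follow the scheme of the main theorem step by step. First, produce a low-weight cusp form for $\Ost(L)$ vanishing along both the boundary and the branch divisor, for instance via the quasi-pullback of Borcherds' form $\Phi_{12}$ from $II_{2,26}$, which adapts well to stable orthogonal groups because quasi-pullbacks respect the discriminant-kernel structure. Second, using the Hirzebruch--Mumford proportionality principle, bound the dimension of a suitable space of $\Ost(L)$-cusp forms of weight $kn$ from below by a quantity proportional to $\HM(\Ost(L))\cdot k^n$. Third, multiply by the fixed low-weight form and verify that the resulting pluricanonical forms extend across the boundary and the branch locus; by the Freitag--Koecher type extension arguments already used in Theorem \ref{main}, this supplies enough global sections of $K^{\otimes k}$ on a smooth model to force general type once the volume sufficiently dominates the obstructions.

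The main obstacle will be the explicit asymptotic analysis: one must show that the analogue of the bound in \eqref{eqn:estimate intro}, rewritten with the amplifying factor $|\OAL|$ and the reduced branch contribution, yields a threshold $D'(n)<1$ for every $n\geq 9$. This requires a lower bound on $|\OAL|$ in terms of the discriminant invariants of $L$ that compensates for the drop in dimension, as well as a careful accounting of the restricted set of reflections contributing to the branch divisor. Once this inequality is established, finiteness follows exactly as in Theorem \ref{main}: the set of even lattices of fixed signature whose relevant discriminant invariants are bounded by a given constant is finite up to isomorphism, so only finitely many exceptions can occur in each dimension $n\geq 9$, and none occur in sufficiently large $n$.
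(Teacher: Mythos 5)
Your overall scheme (low-weight cusp form to absorb the boundary, a volume comparison to absorb the branch divisor, finiteness from bounded discriminant invariants) is the right skeleton, but two of your concrete choices fail, and you are missing the step that makes the theorem apply to \emph{all} even lattices. First, the quasi-pullback of $\Phi_{12}$ cannot supply the required low-weight cusp form in the range $9\leq n\leq 13$: a quasi-pullback that is a cusp form has weight at least $12+1$ (one needs orthogonal roots, which come in pairs $\pm r$, so the weight is at least $13$), hence it can never have weight $<n$ there; moreover it requires a primitive embedding $L\hookrightarrow II_{2,26}$, which by Nikulin's criterion fails whenever $l(A_L)$ is large relative to $24-n$, i.e.\ for infinitely many of the lattices you must treat. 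The paper instead gets the cusp form from the Gritsenko--Borcherds lifting of a vector-valued cusp form in $S_l(\rho_L)$ with $l\leq 3$, whose existence for all but finitely many discriminant forms is the content of the Bruinier--Ehlen--Freitag estimate (Theorem \ref{cusp obstruction II}); the real reason the threshold drops from $n\geq 21$ to $n\geq 9$ is that for ${\Ost}(L)$ one no longer needs the source form to be ${\OAL}$-invariant, not the volume amplification ${\rm vol}_{HM}({\Ost}(L))\approx|{\OAL}|\cdot{\HMOLp}$ that you invoke. Indeed, for the branch obstruction the paper does not re-do any volume estimate for ${\Ost}(L)$ at all: since the ramification divisor of ${\DL}\to{\Ost}(L)\backslash{\DL}$ is contained in that of ${\DL}\to{\FL}$, Theorem \ref{branch obstruction} for ${\Or}(L)$ is reused verbatim, and the finiteness for $n\geq 21$ is imported wholesale from Theorem \ref{main}, so only $9\leq n\leq 20$ needs new input.

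Second, both the Gritsenko lifting and your $\Phi_{12}$ route need $L$ to contain $2U$ (respectively, to embed into $II_{2,26}$), and your proposal has no mechanism for even lattices that do not. The paper closes this gap with an overlattice construction: every even lattice $L$ of signature $(2,n)$ with $n\geq 8$ admits an even overlattice $L'$ containing $2U$ with $D(L')=D(L)$, and ${\Ost}(L)\subset{\Ost}(L')$ forces ${\Ost}(L)\backslash{\DL}$ to dominate ${\Ost}(L')\backslash\mathcal{D}_{L'}$, so general type for the latter suffices. Without this (or some substitute), your argument proves finiteness only within the subclass of lattices containing $2U$, which is strictly weaker than the statement. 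Finally, your target inequality ``$D'(n)<1$ for every $n\geq 9$'' would assert that there are \emph{no} exceptions, which is both stronger than the theorem and false in low $n$; what is actually needed, and what the paper proves, is that at each fixed $n$ the exceptions have $D(L)$ (hence $|A_L|\leq D(L)^{n+2}$) bounded, so that finiteness follows from the finiteness of the class number.
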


The study of Kodaira dimension of orthogonal modular varieties has been pioneered in the nineties by 
Kond\=o \cite{Ko1}, \cite{Ko2} and Gritsenko \cite{Gr}, whose main object was the moduli spaces of polarized $K3$ surfaces. 
They created several techniques for constructing pluricanonical forms, which were subsequently developed 
by Gritsenko-Hulek-Sankaran in the series of fundamental work \cite{G-H-S1}, \cite{G-H-S2}, \cite{G-H-S3}. 
In particular, in \cite{G-H-S1} they almost completed the $K3$ case by using quasi-pullback of the Borcherds $\Phi_{12}$ function \cite{B-K-P-SB}. 
This method gives a fairly nice bound (see also \cite{G-H-S4}, \cite{G-H-S5}, \cite{T-VA}), but can be applied only in dimension $n<26$. 
On the other hand, their second paper \cite{G-H-S3}  (originally designed for the $K3$ case before \cite{G-H-S1})
used the Gritsenko lifting \cite{Gr} and estimate of Hirzebruch-Mumford volume \cite{G-H-S2}, 
and studied for the first time a series of higher dimensional orthogonal modular varieties. 
In contrast to the quasi-pullback of $\Phi_{12}$,  
the method of \cite{G-H-S3} gives coarser bound in lower dimension but instead can be applied in any dimension. 
The proof of Theorem \ref{main} is based on a generalization of the method of \cite{G-H-S3}.

In algebraic geometry, orthogonal modular varieties also appear as the period spaces of 
(lattice-)polarized 
holomorphic symplectic manifolds. 
Theorem \ref{main} says that the moduli spaces of polarized symplectic manifolds must be of general type 
when the second Betti number is sufficiently large.  
Informally, one cannot have explicit parametrization of \textit{generic} such varieties. 
For known examples, Theorems \ref{main} and \ref{stable} cover the O'Grady's $10$-dimensional case and the $K3^{[N]}$-type case, 
proving finiteness of polarization types with non-general type moduli space. 
In particular, when $N>>0$, moduli space for $K3^{[N]}$-type is of general type for any polarization type. 
This extends the results of \cite{G-H-S4}, \cite{G-H-S5}. 
A natural question is whether there are only finitely many deformation types of polarized symplectic manifolds 
with non-general type moduli space. 
In view of Huybrechts' theorem \cite{Huy}, the gap between this problem and results as above rests on the possibility of Fujiki constant.

It is my pleasure to thank 
Valery Gritsenko, Klaus Hulek, Shigeyuki Kond\=o and Gregory Sankaran
for their valuable comments at various stages of this project.


\subsection{Structure of the proof}\label{ssec:proof outline}

We now give a coherent account of the proof. 
Let $L$ be an integral lattice of signature $(2, n)$. 
A standard approach for proving that ${\FL}$ is of general type is 
to produce pluricanonical forms on a toroidal compactification of ${\FL}$ via modular forms. 
When $n\geq9$, Gritsenko-Hulek-Sankaran \cite{G-H-S1} showed that 
there exists a projective toroidal compactification ${\FLcpt}$ of ${\FL}$ 
that has only canonical quotient singularity and has no brach divisor in the boundary. 
(In the Appendix we supplement their proof for the $0$-dimensional cusp case.) 
Furthermore, they showed that when $n\geq3$, 
every component of the ramification divisor of the projection ${\DL}\to{\FL}$ is defined by a reflection of $L$, 
in particular has ramification index $2$. 
The canonical divisor of ${\FLcpt}$ is then ${\Q}$-linearly equivalent to 
\begin{equation*}
K_{{\FLcpt}} \sim_{{\Q}} n\mathcal{L} - \Delta - B/2, 
\end{equation*}
where $\mathcal{L}$ is the ${\Q}$-line bundle of modular forms of weight $1$ (the Hodge bundle), 
$\Delta\subset{\FLcpt}$ the boundary divisor, 
and $B\subset{\FLcpt}$ the branch divisor of ${\DL}\to{\FL}$. 
The bundle $\mathcal{L}$ is big, and this is the source for proving that $K_{{\FLcpt}}$ is big. 
We view $\Delta$ and $B/2$ as obstruction for $K_{{\FLcpt}}$ to be big, 
and deal with them separately by dividing the canonical weight $n$.  

\begin{theorem}\label{cusp obstruction}
$(1)$ Let $n\geq21$ or $n=17$. 
For every lattice $L$ of signature $(2, n)$ there exists a nonzero cusp form of weight $<n$ with respect to ${\Or}(L)$. 

$(2)$ Let $4|n$ with $n\geq16$. 
For every lattice $L$ of signature $(2, n)$ there exists a nonzero cusp form of weight $n$ with respect to ${\Or}(L)$. 
\end{theorem}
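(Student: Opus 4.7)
The plan is to apply Hirzebruch--Mumford (HM) proportionality to bound dimensions of cusp form spaces from below, following the strategy of Gritsenko--Hulek--Sankaran \cite{G-H-S3} but refined so that the resulting bounds are uniform in the lattice $L$. By HM proportionality, for any lattice $L$ of signature $(2, n)$,
\[
\dim M_{k}({\Or}(L)) \;=\; 2 \cdot \HMOL \cdot \frac{k^{n}}{n!} \;+\; O(k^{n-1})
\]
as $k \to \infty$, and the leading coefficient $\HMOL$ factors as an Euler product of local densities $\epL$ times a universal archimedean factor. A nonzero cusp form of weight $k$ exists whenever the restriction to the boundary of $\FLcpt$ fails to be surjective, so it suffices to beat a quantitative upper bound on the dimension of the boundary target by the main term of $\dim M_{k}$.

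The boundary target decomposes over the finitely many orbits of cusps. At each $1$-dimensional cusp the restriction lands in a space of modular forms for a sublattice of signature $(1, n-1)$, of dimension $O(k^{n-1})$ with leading coefficient itself a HM volume; at each $0$-dimensional cusp the obstruction is given by Jacobi-form type data of dimension $O(k^{n-2})$. Summing over cusp orbits yields an upper bound of degree $\leq n-1$ in $k$, whose leading constant is again expressible as an Euler product of local densities, now attached to rank-$(n-2)$ sublattices derived from $L$.

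For the comparison to apply uniformly in $L$, we need universal bounds in $n$ alone: a lower bound on $\HMOL$ and an upper bound on the boundary dimension. The former follows from positivity estimates $\epL \geq c_{p, n}$ for each local density, of the type provided by the Minkowski--Siegel mass formula; it yields an effective $\HMOL \geq c_{n}$ whose asymptotic shape involves $\Gamma(n/2 + 1)^{-1}$, matching the estimate~\eqref{eqn:estimate intro}. The latter is obtained by bounding the local densities of the boundary lattices in terms of those of $L$, times universal factors depending only on $p$ and $n$.

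Combining these, the theorem reduces to an explicit inequality of the shape $c_{n} \, k^{n}/n! > C_{n} \, k^{n-1}$, which must be solvable at some $k < n$ for part (1) and at $k = n$ for part (2). The thresholds $n \geq 21$ or $n = 17$ come from optimizing at $k = n - 1$; the divisibility $4 \mid n$ in (2) is a character/parity compatibility ensuring that nontrivial modular forms of weight $n$ for the full group ${\Or}(L)$ are not forced to vanish by a sign obstruction coming from $-\mathrm{id} \in {\Or}(L)$. The main difficulty I anticipate is the sharp control of the boundary constant $C_{n}$: to beat the main term already at $k = n - 1$ (rather than at $k$ of order $n \log n$ or worse), one must identify and exploit the ``loss'' in passing from $L$ to its boundary sublattices, which amounts to uniform control of a ratio of Euler products of local densities as $L$ varies.
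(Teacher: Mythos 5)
Your approach has a genuine gap: Hirzebruch--Mumford proportionality is an asymptotic statement in the weight $k$, with an error term $O(k^{n-1})$ whose implied constant depends on $L$, and it therefore gives no information at the \emph{fixed} weights the theorem requires, namely $k<n$ in part (1) and $k=n$ in part (2). The comparison of leading coefficients that you describe is exactly what the paper uses for Theorem \ref{branch obstruction}, where one is free to let $k\to\infty$; but for Theorem \ref{cusp obstruction} one must exhibit a cusp form at a single small weight (the paper in fact produces one of weight $\leq n/2+11$, barely above the singular weight $n/2-1$, where spaces of cusp forms are tiny and a leading-term count is meaningless). Moreover the statement is for \emph{every} lattice $L$, whereas a volume comparison of the kind you propose can at best yield a statement for all but finitely many $L$, since for lattices of small discriminant the main term ${\HMOL}\cdot k^n/n!$ at $k\approx n$ need not dominate any boundary contribution. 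Your explanation of the thresholds is also not what drives them: $n\geq21$ or $n=17$ and the condition $4\mid n$ do not arise from optimizing an inequality at $k=n-1$ or from a parity obstruction via $-\mathrm{id}$.

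The paper's actual route is constructive. One first reduces (via the Vinberg-type Lemma \ref{Vinberg reduction} and its corollary) to the case where $L$ is even and contains $2U$. Then the Gritsenko--Borcherds additive lifting sends cusp forms of type $\rho_L$ and weight $l$ to cusp forms of weight $l+n/2-1$ for $\Ost(L)$, equivariantly for ${\OAL}$, so an ${\OAL}$-invariant source form lifts to a cusp form for the full group ${\Or}(L)$. The source form is the explicit combination $f=E_{l}^{L}\cdot E_6 - E_{l+6}^{L}$ of vector-valued Eisenstein series, whose nonvanishing is forced by the Bruinier--Kuss sign result on Fourier coefficients once $l$ is chosen with $l+n/2\equiv 3 \pmod 4$; Table \ref{table l_0} gives $l\leq 6$, and the inequality $l+6<n/2+1$ holds exactly for $n\geq21$ or $n=17$, while $l=n/2-5$ with $4\mid n$, $n\geq16$ gives weight exactly $n$ in part (2). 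If you want to salvage a dimension-formula argument, you would need the full Riemann--Roch expression (or the exact dimension formulas of Borcherds/Skoruppa for vector-valued forms, as the paper does for Theorem \ref{cusp obstruction II}), not the leading term alone.
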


\begin{theorem}\label{branch obstruction}
Fix a rational number $a>0$.  
Up to scaling there are only finitely many lattices $L$ 
of signature $(2, n)$ with $n\geq4$ such that the ${\Q}$-divisor $a\mathcal{L}-B/2$ of ${\FL}$ is not big.  
\end{theorem}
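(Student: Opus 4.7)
My plan is to (i) convert bigness of $a\mathcal{L}-B/2$ on $\FLcpt$ into a numerical inequality between Hirzebruch-Mumford volumes of $L$ and of its reflective hyperplane sections, and (ii) verify that this inequality fails only for finitely many primitive $L$ up to scaling.

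For step (i), I would use the criterion of Gritsenko-Hulek-Sankaran \cite{G-H-S3}: for even $k\gg 0$, a modular form of weight $ak$ for $\Or(L)$ that vanishes to order at least $k/2$ along each irreducible component $B_{r}$ of $B$ descends to a section of $\mathcal{O}(k(a\mathcal{L}-B/2))$ on a resolution of $\FLcpt$. The dimension of such forms is bounded below, to leading order in $k$, by
\[ \dim M_{ak}(\Or(L)) \; - \; \sum_{[r]} \tfrac{k}{2}\cdot h^{0}\bigl(B_{r},\, (ak\mathcal{L})|_{B_{r}}\bigr) \; + \; O(k^{n-1}), \]
where $[r]$ runs over $\Or(L)$-orbits of primitive reflective vectors $r \in L$. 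By the Hirzebruch-Mumford proportionality principle as in \cite{G-H-S2}, the $k^{n}$-coefficient is a positive multiple of $a\cdot\HMOLp - \gamma_{n,a}\sum_{[r]}\HM(\Or(K_{r}))$, where $K_{r}=r^{\perp}\cap L$ is of signature $(2,n-1)$ and $\gamma_{n,a}$ is explicit in $n,a$. Positivity of this coefficient yields the sufficient condition
\begin{equation}\label{eqn:plan volineq}
\HMOLp \;>\; \tfrac{\gamma_{n,a}}{a}\sum_{[r]} \HM(\Or(K_{r})).
\end{equation}

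For step (ii), I would expand both sides of \eqref{eqn:plan volineq} as Euler products of local densities via Siegel's mass formula. Up to scaling $L$ may be taken primitive. A primitive reflective $r\in L$ satisfies $r^{2}\in\{-2,-2d\}$ with $d=\mathrm{div}(r)\mid D(L)$, so there are only $O(D(L))$ reflective types. For each type and each prime $p$ I would enumerate the $\Or(L_{p})$-orbits of local reflective vectors and compare the local density $\alpha_{p}(\Or(K_{r}))$ to $\alpha_{p}(\Or(L))$; this yields a uniform ratio estimate
\[ \sum_{[r]}\prod_{p}\alpha_{p}(\Or(K_{r})) \;\leq\; M(n,a)\cdot D(L)^{-\delta}\cdot \prod_{p}\alpha_{p}(\Or(L)) \]
for some $\delta>0$ depending only on $n$, provided $D(L)$ is large enough. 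Since primitive lattices of signature $(2,n)$ with bounded $D(L)$ have bounded $|A_{L}|\leq D(L)^{n+2}$ and hence form a finite set by reduction theory, \eqref{eqn:plan volineq} holds for $L$ outside a finite list of scaling classes.

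The main obstacle will be the uniform local density estimate in step (ii), since both the number of reflective orbits at a ramified prime $p$ and the local density of the orthogonal complement $K_{r}$ can grow with the $p$-adic ramification of $L$. One must balance these growths against the growth of $\alpha_{p}(\Or(L))$, which requires a careful $p$-adic classification of reflective vectors and their perpendiculars together with a mass-type bound on each class. This refines and extends the volume analysis of \cite{G-H-S2} and the bigness criterion of \cite{G-H-S3}.
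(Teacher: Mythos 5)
Your plan follows essentially the same route as the paper: bigness of $a\mathcal{L}-B/2$ is reduced via quasi-pullback and Hirzebruch--Mumford proportionality to the volume inequality of Proposition \ref{prop:big via HM volume}, and finiteness is then extracted from an estimate of the volume ratios in terms of $D(L)$ for primitive $L$, exactly as in \S\ref{sec:vol estimate I}--\S\ref{sec:vol estimate II}. The obstacle you flag in step (ii) is indeed where all the work lies, and the paper resolves it by two ingredients your sketch does not yet pin down: the number of isometry classes of orthogonal complements $K$ within each genus (there are at most $9$ genera) is cancelled against the factor $g_{sp}^{+}(K)^{-1}$ in the volume formula, since for indefinite $K$ of rank $\geq3$ proper spinor genus equals proper class; and the product $\varepsilon(L)$ of local density ratios is shown to be uniformly bounded for primitive $L$ with $n\geq4$ (Proposition \ref{e(L) bdd split}), giving the explicit bound \eqref{eqn:bigness in terms of D(L)} with your $\delta$ equal to $1/2$ (note also that a reflective $r$ has $r^{2}\in\{-d,-2d\}$ with $d=\mathrm{div}(r)$, not $\{-2,-2d\}$).
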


Theorem \ref{cusp obstruction} (2) is not used here. 
In Theorem \ref{branch obstruction}, sections of $m\mathcal{L}$ over ${\FL}$ always extend over ${\FLcpt}$ by the Koecher principle, 
so we may replace ${\FL}$ by ${\FLcpt}$. 

It is straightforward to derive Theorem \ref{main} from these two sub-theorems. 
Let $n'<n$ be the weight of cusp form in Theorem \ref{cusp obstruction} (1), and we apply Theorem \ref{branch obstruction} with $a=1$. 
This tells that in the range $n\geq21$ or $n=17$, for all but finitely many lattices (up to scaling),  
we can find a division 
\begin{equation*} 
K_{{\FLcpt}} \sim_{{\Q}} (n'\mathcal{L} - \Delta) +  (n''\mathcal{L}- B/2)  
\end{equation*}
such that $n'\mathcal{L} - \Delta$ is effective and $n''\mathcal{L}- B/2$ is big. 
Therefore $K_{{\FLcpt}}$ is big for those lattices $L$. 
Since ${\FLcpt}$ has canonical singularity, its desingularization is of general type. 
This proves Theorem \ref{main}. 

Theorems \ref{cusp obstruction} and \ref{branch obstruction} are independent, and both effective. 
In Theorem \ref{cusp obstruction} (1), the weight of cusp form can be taken to be $n/2+l+5$ 
where $l\leq6$ is as defined in Table \ref{table l_0}. 
In particular, it does not exceed $n/2+11$. 
In Theorem \ref{branch obstruction}, finiteness up to scaling for integral lattices is equivalent to finiteness for primitive lattices.  
Then, for primitive $L$, we show that 
$a\mathcal{L}-B/2$ is big if the exponent $D(L)$ of $A_L$ exceeds the explicit bound \eqref{eqn:bigness in terms of D(L)}: 
\begin{equation*}\label{eqn:estimate bigness intro}
\begin{split}
\sqrt{D(L)} \; & \; \geq \; g(n) \cdot (1+a^{-1})^{n-1} \cdot (n/2a) \\
& \; \sim \; \frac{3^2 \cdot 2^{2n+11} \cdot \pi^{n/2+1}}{\Gamma(n/2+1)} \cdot (1+a^{-1})^{n-1} \cdot (n/2a).  
\end{split}
\end{equation*}
The asymptotic \eqref{eqn:estimate intro} is obtained by putting $a=n/2-11$ in this bound.

For Theorem \ref{stable}, it suffices to prove finiteness for fixed $n$, in view of Theorem \ref{main}.  
We use in place of Theorem \ref{cusp obstruction} (1) the following. 

\begin{theorem}\label{cusp obstruction II}
For all but finitely many even lattices $L$ of signature $(2, n)$ with $n\geq5$ and containing $2U$,
we can find a nonzero cusp form of weight $<n$ with respect to ${\Ost}(L)$.  
\end{theorem}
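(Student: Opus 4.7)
The plan is to exploit the two hyperbolic planes by invoking Gritsenko's arithmetic lifting. Fix a decomposition $L = 2U \oplus L_0$ with $L_0$ an even negative definite lattice of rank $n-2$. For each integer $k$ the Gritsenko lift provides an injective linear map
\[
\Lift \colon J^{\textrm{cusp}}_{k, L_0} \hookrightarrow S_k(\Ost(L))
\]
from Jacobi cusp forms of weight $k$ and index $L_0$ to cusp forms of weight $k$ for the stable orthogonal group. Hence it suffices to produce, for all but finitely many $L_0$, a nonzero Jacobi cusp form of some weight $k<n$ and index $L_0$.

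Via the theta decomposition, $J^{\textrm{cusp}}_{k, L_0}$ is canonically isomorphic to the space of $\C[A_{L_0}]$-valued modular cusp forms of weight $k-(n-2)/2$ with respect to the Weil representation $\rho_{L_0}$ attached to the discriminant form of $L_0$. Thus the task reduces to exhibiting a nonzero vector-valued cusp form of weight strictly less than $(n+2)/2$ for $\rho_{L_0}$, for all but finitely many $L_0$ of rank $n-2$. A dimension formula (Riemann--Roch on the modular curve, or equivalently the Selberg trace formula) shows that the main term in $\dim S_{k'}(\rho_{L_0})$ is proportional to $(k'-1)\dim\rho_{L_0}/12 = (k'-1)|A_{L_0}|/12$, while the corrections arising from elliptic fixed points, cusps, and the Eisenstein subspace are of lower order in $|A_{L_0}|$. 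Hence for each fixed $n \geq 5$ and any admissible weight $k' \geq 2$, the space of vector-valued cusp forms is non-zero once $|A_{L_0}|$ exceeds an explicit bound depending only on $n$.

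For fixed $n$, even negative definite lattices of rank $n-2$ with bounded discriminant form a finite set, so finiteness of exceptional $L$ follows, and combining with Theorem \ref{main} for large $n$ yields Theorem \ref{stable}. The main obstacle is obtaining the dimension estimate in a form sufficiently uniform in $L_0$: the elliptic and parabolic contributions involve traces of specific elements of $\rho_{L_0}$ whose magnitudes depend on the fine structure of $A_{L_0}$, and one must verify that these grow more slowly than the main term $|A_{L_0}|$. Explicit uniform bounds of the required form are available in the literature (e.g.\ following Bruinier--Kuss), and these suffice throughout the weight range $k < n$ that is relevant here.
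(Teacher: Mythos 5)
Your overall strategy coincides with the paper's: split off $2U$, use the Gritsenko lift $S_{k-(n-2)/2}(\rho_{L_0})\simeq J^{\rm cusp}_{k,L_0}\hookrightarrow S_k({\Ost}(L))$ to reduce the problem to producing a nonzero vector-valued cusp form of weight $<n/2+1$ for the Weil representation, and then argue that such a form exists once the discriminant group is large, so that only finitely many lattices (equivalently, by Nikulin, finitely many discriminant forms of bounded length) can escape. The weight bookkeeping also matches: for $n\geq5$ a source weight $\leq3$ lifts to weight $<n$.

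The step that does not hold up as written is the dimension estimate, which is the actual crux. It is not true that the elliptic, parabolic and Eisenstein corrections are ``of lower order in $|A_{L_0}|$'': in the Riemann--Roch/Eichler--Selberg formula for $\dim S_{k'}(\rho_{L_0})$, the parabolic term $\sum_{\gamma}\{q(\gamma)\}$, the elliptic terms attached to $\rho(S)$ and $\rho(ST)$, and the count of isotropic elements governing the Eisenstein subspace are all of size comparable to $|A_{L_0}|$ --- the same order as the main term $(k'-1)|A_{L_0}|/12$. The genuinely lower-order quantities are only the Gauss-sum deviations, of size $O(\sqrt{|A_{L_0}|})$ once the length of $A_{L_0}$ is bounded. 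Whether the net coefficient of $|A_{L_0}|$ is positive in the small weights $k'\leq3$ you need is exactly the nontrivial content of Bruinier--Ehlen--Freitag \cite{B-E-F} (Corollary 4.7: only finitely many finite quadratic forms of bounded length have $S_l(\rho_A)=0$ for all $l\leq3$); that, and not Bruinier--Kuss \cite{B-K} (which concerns Fourier coefficients of Eisenstein series), is the reference your argument must rest on, and it is the one the paper quotes. A smaller point: since your threshold on $|A_{L_0}|$ depends on $n$, the argument as written yields finitely many exceptions only for each fixed $n$, whereas the theorem asserts finiteness over all $n\geq5$ simultaneously; the paper closes this by first invoking Theorem \ref{cusp obstruction}(1) to handle $n\geq21$ and $n=17$, leaving only the finitely many values $5\leq n\leq20$ to treat one at a time. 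Your remark about combining with Theorem \ref{main} plays this role for Theorem \ref{stable}, but it should be built into the proof of the present statement itself.
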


Combined with Theorem \ref{branch obstruction} 
(note that $U$ is primitive and that the ramification divisor of ${\Ost}(L)$ is contained in that of ${\Or}(L)$),  
this proves finiteness of even lattices $L$ with $n\geq 9$ and containing $2U$ such that ${\Ost}(L)\backslash{\DL}$ is not of general type. 
In order to extend this to general even lattices, we use overlattice construction. 
If $L'$ is a (finite-index) overlattice of a lattice $L$, we have ${\Ost}(L)\subset {\Ost}(L')$ inside ${\Or}(L_{{\Q}})={\Or}(L'_{{\Q}})$, 
hence ${\Ost}(L)\backslash{\DL}$ dominates ${\Ost}(L')\backslash\mathcal{D}_{L'}$. 

\begin{lemma}
Let $L$ be an even lattice of signature $(2, n)$ with $n\geq8$. 
There exists an even overlattice $L'$ of $L$ containing $2U$ such that $D(L')=D(L)$. 
\end{lemma}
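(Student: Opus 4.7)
The plan is to reduce, via Nikulin's splitting criterion for even indefinite lattices, to producing an even overlattice $L'$ of $L$ with $D(L') = D(L)$ and $\ell(A_{L'}) \leq n - 2$. Recall that if $M$ is an even indefinite lattice satisfying $\ell(A_M) + 2 \leq \rk M$, then Nikulin's theorem gives $M \cong U \oplus M_1$ with $A_{M_1} = A_M$; iterating once more, the sharper bound $\ell(A_M) + 4 \leq \rk M$ forces $M \cong 2U \oplus M_2$. For signature $(2, n)$ this second inequality reads $\ell(A_M) \leq n - 2$, so under the hypothesis $n \geq 8$ it suffices to achieve $\ell(A_{L'}) \leq 6$.

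Even overlattices of $L$ correspond bijectively to isotropic subgroups $H \subset A_L$ with respect to the discriminant form $q_L$, via $L' = \pi^{-1}(H)$ for $\pi \colon L^{\ast} \to A_L$, with $A_{L'} \cong H^{\perp}/H$ inheriting the induced form. Decomposing $A_L = \bigoplus_p (A_L)_p$ orthogonally into $p$-primary parts reduces the task prime-by-prime: it is enough to find, for each prime $p$ dividing $|A_L|$, an isotropic subgroup $H_p \subset (A_L)_p$ such that $H_p^{\perp}/H_p$ preserves the exponent of $(A_L)_p$ and has length bounded by a uniform constant $C$. Taking $H = \bigoplus_p H_p$ then yields an overlattice $L'$ with $D(L') = \prod_p \operatorname{exp}((A_{L'})_p) = D(L)$ and $\ell(A_{L'}) = \max_p \ell((A_{L'})_p) \leq C$.

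For each $p$ the local reduction is standard Witt theory for finite quadratic forms. Use Jordan decomposition to write $(A_L)_p$ as an orthogonal sum of cyclic blocks, together with the exceptional even binary blocks $u_k^{(2)}$, $v_k^{(2)}$ when $p = 2$. For odd $p$, any non-degenerate ternary form over $\Fp$ is isotropic, so whenever three cyclic summands of a common exponent $p^k$ occur one extracts a mod-$p$ isotropic element and lifts it by Hensel to an isotropic cyclic subgroup of order $p^k$, reducing the multiplicity at level $k$ by two. Iterating, and keeping one summand at the top exponent $p^{k_1}$ so as to preserve the exponent, one obtains $\ell((A_{L'})_p) \leq 2$ at odd primes. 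The analysis at $p = 2$ is more delicate because of the binary blocks and the dependence of isotropy on the value of the discriminant modulo $8$, but an analogous bookkeeping yields the bound $\ell((A_{L'})_2) \leq 6$. This last step is the main obstacle, and the bound $6$ at the prime $2$ is precisely what forces the hypothesis $n \geq 8$; the odd primes cause no difficulty.
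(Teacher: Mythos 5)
Your overall skeleton is the same as the paper's: identify even overlattices with isotropic subgroups $G\subset A_L$ via $A_{L'}\simeq G^{\perp}/G$, work prime by prime to cut the length of the discriminant group while protecting the exponent, and finish with Nikulin's splitting criterion to get $2U$. However, the local reduction step --- the actual content of the lemma --- has a genuine gap at the odd primes and is only asserted at $p=2$.

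At odd $p$, your procedure cancels summands only when \emph{three cyclic summands of a common exponent $p^{k}$} occur, using isotropy of ternary forms over $\Fp$. This never fires on a form such as $\bigoplus_{k=1}^{N}\Z/p^{k}$ in which every Jordan level has multiplicity one: there are no three summands of a common exponent, so your iteration terminates immediately, yet the length is $N$, which can be as large as $n+2$. The claimed conclusion $\ell((A_{L'})_p)\leq 2$ therefore does not follow from the procedure you describe; reducing such forms requires isotropic subgroups that mix different Jordan levels. The $p=2$ case, which you correctly identify as the main obstacle, is not carried out at all --- the bound $6$ is asserted, not proved.

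The paper sidesteps both problems with a different local argument. By Wall's classification one can choose a \emph{nondegenerate} subgroup $A_p'\subset A_p$ of the \emph{same exponent} as $A_p$ and of length $\leq 2$; nondegeneracy gives an orthogonal splitting $A_p=A_p'\oplus (A_p')^{\perp}$. One then takes $G_p$ to be a \emph{maximal} isotropic subgroup of $(A_p')^{\perp}$, and uses the general fact that the quotient $G_p^{\perp}\cap (A_p')^{\perp}/G_p$ is anisotropic; anisotropic finite quadratic forms on $p$-groups have uniformly bounded length. This gives $\ell((A_{L'})_p)\leq 2+3$ at every prime, with no case analysis on Jordan levels and with the exponent preserved because it is carried by the protected summand $A_p'$. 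If you want to rescue your approach, you should replace the ``three summands at a common level'' cancellation by this maximal-isotropic/anisotropic-quotient argument, or otherwise prove a reduction that handles arbitrary Jordan level patterns, and you must actually supply the $p=2$ analysis rather than defer it.
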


\begin{proof}
Recall that even overlattice $L'$ of $L$ corresponds to isotropic subgroup $G=L'/L$ of $A_L$ and $A_{L'}\simeq G^{\perp}/G$. 
By Nikulin \cite{Ni}, $L'$ contains $2U$ if $G^{\perp}/G$ has length $\leq n-3$. 
Let $A_L=\oplus_{p}A_p$ be the decomposition into $p$-parts. 
By Wall's classification \cite{Wa}, there exists a nondegenerate subgroup $A_p'$ of $A_p$ of the same exponent as $A_p$ and length $\leq 2$. 
We have $A_p=A_p'\oplus(A_p')^{\perp}$. 
If $G_p$ is a maximal isotropic subgroup of $(A_p')^{\perp}$, 
$G_p^{\perp}\cap (A_p')^{\perp}/G_p$ is anisotropic and so has length $\leq3$. 
We then put $G=\oplus_{p}G_p$. 
\end{proof}

By this lemma, we see that for even lattices $L$ at each $n\geq9$, 
${\Ost}(L)\backslash{\DL}$ must be of general type if $D(L)$ exceeds some bound. 
Since $|A_L|\leq D(L)^{n+2}$, Theorem \ref{stable} follows from finiteness of class number.  
(For ${\Ost}(L)$ the bound of $|A_L|$ and $n$ can be improved: see \cite{Ma} for detail.) 

Theorems \ref{main} and \ref{stable} are thus reduced to Theorems \ref{cusp obstruction}, \ref{branch obstruction} and \ref{cusp obstruction II}. 
Theorems \ref{cusp obstruction} and \ref{cusp obstruction II} are proven in \S \ref{sec:cusp obstruction} via 
the Gritsenko-Borcherds additive lifting \cite{Gr}, \cite{Bo}. 
For Theorem \ref{cusp obstruction} we use an explicit combination of Eisenstein series, 
and for Theorem \ref{cusp obstruction II} we apply a recent result of Bruinier-Ehlen-Freitag \cite{B-E-F}. 
The proof of Theorem \ref{branch obstruction} occupies \S \ref{sec:bigness setup} -- \S \ref{sec:vol estimate II}. 
In \S \ref{sec:bigness setup}  
we relate the problem to the comparison of Hirzebruch-Mumford volume between ${\FL}$ and its branch divisors, 
generalizing an argument of \cite{G-H-S3}. 
This volume ratio will be estimated in \S \ref{sec:vol estimate I} and \S \ref{sec:vol estimate II} for primitive $L$. 
In \S \ref{sec:vol estimate I} we give an estimate for each component of the branch divisor, 
and in \S \ref{sec:vol estimate II} we take their sum over all components. 
The proof of Theorems \ref{main} and \ref{stable} will be thus completed at the end of \S \ref{sec:vol estimate II} 
except the bound $n\geq108$. 

\S \ref{sec:explicit computation} is devoted to some explicit calculation.   
In \S \ref{ssec:bound of n} we derive the bound $n\geq108$ by refining the bound \eqref{eqn:estimate intro} for a particular class of lattices. 
In \S \ref{ssec:odd unimodular} we work out the odd unimodular lattices as a typical example of transition of Kodaira dimension. 
In the Appendix we prove that toroidal compactification has canonical singularity over the $0$-dimensional cusps when the fans are chosen regular. 
This result was first found by Gritsenko-Hulek-Sankaran \cite{G-H-S1} and is one of the basis of the present article, 
but their proof needs to be modified. 

In the rest of the introduction, we explain another direct consequences of 
Theorems \ref{cusp obstruction} and \ref{branch obstruction}.

\subsection{Special orthogonal group}\label{ssec:SO(L)}

Let ${\SOL}$ be the subgroup of ${\Or}(L)$ consisting of isometries of determinant $1$. 
When $n$ is odd, ${\Or}(L)$ is generated by ${\SOL}$ and $-1$, so the quotient ${\SOL}\backslash{\DL}$ is the same as ${\FL}$. 
On the other hand, when $n$ is even, ${\SOL}$ contains no reflection nor its composition with $-1$, 
so the projection ${\DL}\to {\SOL}\backslash{\DL}$ is unramified in codimension $1$. 
Furthermore, canonical forms on smooth projective models of ${\SOL}\backslash{\DL}$ correspond to cusp forms of weight $n$ 
with respect to ${\SOL}$ (cf.~\cite{G-H-S1}, \cite{Fr}). 
Theorem \ref{cusp obstruction} implies the following. 

\begin{corollary}
(1) Let $n\geq22$ be even. 
Then ${\SOL}\backslash{\DL}$ is of general type for every lattice $L$ of signature $(2, n)$. 

(2) Let $4|n$ with $n\geq16$. 
For every lattice $L$ of signature $(2, n)$, smooth projective models of ${\SOL}\backslash{\DL}$ have positive geometric genus. 
In particular, ${\SOL}\backslash{\DL}$ has nonnegative Kodaira dimension for $n=16, 20$. 
\end{corollary}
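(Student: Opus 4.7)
The plan is to mirror the proof of Theorem~\ref{main} in the much simpler setting where the branch divisor $B$ is absent, so that Theorem~\ref{branch obstruction} is not needed and only Theorem~\ref{cusp obstruction} enters. The key observation, already recorded in the paragraph preceding the corollary, is that for even $n$ the group ${\SOL}$ contains no element whose fixed locus on ${\DL}$ is a divisor: every reflection has determinant $-1$, and since $\det(-I_{L})=(-1)^{n+2}=+1$ when $n$ is even, the composition of $-I_{L}$ with a reflection also has determinant $-1$ and hence lies outside ${\SOL}$ as well. Therefore ${\DL}\to{\SOL}\backslash{\DL}$ is unramified in codimension $1$. Choosing a projective toroidal compactification $X$ of ${\SOL}\backslash{\DL}$ with only canonical singularities and no branch divisor in the boundary (the construction of \cite{G-H-S1}, together with the paper's Appendix, adapts verbatim: being reflection-free is even more immediate for ${\SOL}$ than for ${\Or}(L)$), we obtain
\[
K_{X}\sim_{{\Q}} n\mathcal{L}-\Delta,
\]
with $\mathcal{L}$ the Hodge bundle and $\Delta$ the boundary divisor.

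For part (1), let $n\geq 22$ be even, so in particular $n\geq 21$, and Theorem~\ref{cusp obstruction}(1) produces a nonzero cusp form $F$ of some weight $n'<n$ with respect to ${\Or}(L)$. Since ${\SOL}\subset{\Or}(L)$, the form $F$ is a fortiori a cusp form for ${\SOL}$, and its divisor on $X$ gives an effective representative of $n'\mathcal{L}-\Delta$. The Hodge bundle is big, so $(n-n')\mathcal{L}$ is big, and the splitting
\[
K_{X}\sim_{{\Q}} (n'\mathcal{L}-\Delta) + (n-n')\mathcal{L}
\]
writes $K_{X}$ as the sum of an effective and a big divisor, hence big. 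Canonical singularities of $X$ then transfer bigness to a desingularization, so ${\SOL}\backslash{\DL}$ is of general type.

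For part (2), let $4\mid n$ with $n\geq 16$. Theorem~\ref{cusp obstruction}(2) yields a nonzero cusp form of weight exactly $n$ with respect to ${\Or}(L)$, which restricts to a nonzero ${\SOL}$-cusp form. Under the correspondence recalled at the start of \S\ref{ssec:SO(L)} it is precisely a global holomorphic $n$-form on any smooth projective model of ${\SOL}\backslash{\DL}$, so the geometric genus is positive. In particular $\kappa({\SOL}\backslash{\DL})\geq 0$, which is the new content of part (2) in the exceptional dimensions $n=16, 20$ not already covered by part (1).

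The only conceptual subtlety I expect is the canonical-bundle formula for $X$: both the absence of branch divisor (the determinant calculation above) and the canonical-singularity property of the toroidal compactification over all cusp types are used in the identification $K_{X}\sim_{{\Q}} n\mathcal{L}-\Delta$. Once that is in place, nothing in the argument is new — it is exactly the big-plus-effective splitting used for ${\FL}$, with the cusp-form input supplied directly by Theorem~\ref{cusp obstruction} and with no need to invoke Theorem~\ref{branch obstruction} at all.
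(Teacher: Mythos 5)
Your argument is correct and is exactly the route the paper intends: the determinant computation showing ${\DL}\to{\SOL}\backslash{\DL}$ is unramified in codimension $1$, the resulting formula $K_X\sim_{\Q}n\mathcal{L}-\Delta$ with the effective-plus-big splitting via the weight $n'<n$ cusp form of Theorem \ref{cusp obstruction}(1) for part (1), and the identification of weight-$n$ cusp forms with canonical forms via Theorem \ref{cusp obstruction}(2) for part (2). No gaps.
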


\subsection{Reflective modular forms}\label{ssec:GN conj}

Let $n\geq3$. 
A modular form $F$ on ${\DL}$ with respect to some $\Gamma < {\Or}(L)$ and a character 
is said to be \textit{reflective}  if ${\rm div}(F)$ is set-theoretically contained in the ramification divisor of ${\DL}\to{\FL}$. 
If $F$ has weight $\alpha$ and every component of ${\rm div}(F)$ has multiplicity $\leq \beta$, 
we say (temporarily) that $F$ has slope $\leq\beta/\alpha$. 
In that case, taking the average product of $F$ over $\Gamma\backslash{\Or}(L)$, 
we see that the ${\Q}$-divisor $\beta(B/2)-\alpha\mathcal{L}$ of ${\FL}$ is ${\Q}$-effective. 
Hence $(\alpha/\beta)\mathcal{L}-B/2$ cannot be big by the Koecher principle. 
For every $r\geq \beta/\alpha$, $r^{-1}\mathcal{L}-B/2$ is not big too. 
Theorem \ref{branch obstruction} implies the following. 

\begin{corollary}\label{bounded slope}
Let $r>0$ be a fixed rational number.  
Then up to scaling there are only finitely many lattices $L$ of signature $(2, n)$ with $n\geq4$ 
which carries a reflective modular form of slope $\leq r$.  
In particular, for a fixed natural number $\beta$, there are up to scaling only finitely many lattices $L$ with $n\geq4$ 
which carries a reflective modular form of vanishing order $\leq \beta$.  
\end{corollary}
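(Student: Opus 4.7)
The plan is to deduce the corollary directly from Theorem \ref{branch obstruction} by translating the existence of a reflective modular form on $L$ into non-bigness of an explicit $\mathbb{Q}$-divisor on $\mathcal{F}_L$, along the lines already sketched in the paragraph preceding the statement.

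First I would reduce to the ${\Or}(L)$-invariant setting. Given a reflective modular form $F$ of weight $\alpha$ with respect to some finite-index $\Gamma < {\Or}(L)$, with every component of $\mathrm{div}(F)$ of multiplicity $\leq \beta$, I take the norm product $\tilde{F} = \prod_{g \in \Gamma \backslash {\Or}(L)} g^{\ast}F$. This is ${\Or}(L)$-invariant up to a finite character, has weight $\alpha' = \alpha\cdot [{\Or}(L):\Gamma]$, and has vanishing order $\leq \beta' = \beta\cdot [{\Or}(L):\Gamma]$ along each component of its divisor; crucially the slope $\beta/\alpha = \beta'/\alpha'$ is preserved, and reflectivity is retained. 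Via the Koecher principle, $\tilde{F}$ descends to a section of $\alpha'\mathcal{L}$ on $\FLcpt$ whose divisor is supported on the branch locus $B$ of $\DL\to\FL$; the factor-$2$ ramification of the reflection hyperplanes gives $\mathrm{div}(\tilde F) \leq (\beta'/2) B$ on $\FLcpt$. Dividing the resulting $\mathbb{Q}$-effective class $(\beta'/2)B - \alpha'\mathcal{L}$ by $\beta'$ yields that $-((\alpha/\beta)\mathcal{L} - B/2)$ is $\mathbb{Q}$-effective and, for nonconstant $F$, nonzero.

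Next I would observe that a $\mathbb{Q}$-divisor whose negative is a nonzero effective $\mathbb{Q}$-divisor cannot be big, since its pluri-sections vanish on any projective model. Moreover, for any $a\leq \alpha/\beta$, the decomposition
\begin{equation*}
a\mathcal{L} - B/2 \;=\; \bigl((\alpha/\beta)\mathcal{L} - B/2\bigr) - (\alpha/\beta - a)\mathcal{L}
\end{equation*}
together with bigness of $\mathcal{L}$ and the fact that bigness plus pseudo-effective is big shows that if $a\mathcal{L} - B/2$ were big then so would $(\alpha/\beta)\mathcal{L} - B/2$, which we just excluded. Hence $a\mathcal{L} - B/2$ is not big for any $a\leq \alpha/\beta$. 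Since slope $\leq r$ means $\beta/\alpha \leq r$, taking $a = 1/r$ is admissible and shows that $r^{-1}\mathcal{L} - B/2$ fails to be big on $\FLcpt$. Applying Theorem \ref{branch obstruction} with this value of $a$ now gives finiteness, up to scaling, of the lattices $L$ of signature $(2,n)$, $n\geq 4$, that carry a reflective modular form of slope $\leq r$. The second assertion follows from the first: a nonconstant reflective form of vanishing order $\leq \beta$ has integral (or, at worst, half-integral, via the metaplectic cover) weight $\alpha\geq 1/2$, hence slope $\leq 2\beta$, and we apply the first assertion with $r = 2\beta$.

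The conceptual content is short, so I do not anticipate a real obstacle; the only care required is the bookkeeping around (i) the norm construction, to check that slope is preserved and that characters do not interfere, (ii) the factor $1/2$ coming from the ramification index of reflection hyperplanes in the quotient $\DL\to\FL$, and (iii) the mild ambiguity in the lower bound on $\alpha$ caused by half-integral weights — none of which obstructs the reduction to Theorem \ref{branch obstruction}.
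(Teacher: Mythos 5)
Your proposal is correct and follows essentially the same route as the paper: symmetrize $F$ over $\Gamma\backslash{\Or}(L)$ to get $\beta(B/2)-\alpha\mathcal{L}$ $\Q$-effective, conclude that $(\alpha/\beta)\mathcal{L}-B/2$ (and hence $a\mathcal{L}-B/2$ for all smaller $a$) is not big, and invoke Theorem \ref{branch obstruction} with $a=r^{-1}$. The extra bookkeeping you supply (norm product, ramification index $2$, lower bound on the weight for the second assertion) matches what the paper leaves implicit.
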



Gritsenko and Nikulin \cite{G-N} defined \textit{Lie reflective modular forms} 
as reflective modular forms of vanishing order $\leq1$ with some conditions on the Fourier coefficients. 
Their motivation comes from the theory of generalized Kac-Moody algebras. 
They conjectured that the set of lattices possessing such a modular form is finite up to scaling (\cite{G-N} Conjecture 2.5.5). 
Corollary \ref{bounded slope} gives a positive answer in $n\geq4$: 

\begin{corollary}\label{Lie reflective}
Up to scaling there are only finitely many lattices $L$ of signature $(2, n)$ with $n\geq4$ which carries a Lie reflective modular form.  
\end{corollary}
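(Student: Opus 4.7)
The plan is to deduce the statement directly from Corollary \ref{bounded slope} by inspecting the definition of a Lie reflective modular form. First I would observe that, by the Gritsenko--Nikulin definition recalled above, a Lie reflective modular form is in particular a reflective modular form whose divisor has multiplicity at most $1$ along every irreducible component, that is, a reflective modular form of vanishing order $\leq 1$ in the terminology of \S \ref{ssec:GN conj}. The additional constraints on Fourier coefficients from \cite{G-N} can only further restrict the class, so the set of lattices $L$ of signature $(2, n)$ admitting a Lie reflective modular form is contained in the set of lattices admitting a reflective modular form of vanishing order $\leq 1$.

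Having made this containment explicit, I would then apply Corollary \ref{bounded slope} with the natural number $\beta = 1$. That corollary asserts precisely that, up to scaling, only finitely many lattices of signature $(2, n)$ with $n \geq 4$ can carry a reflective modular form of vanishing order $\leq \beta$; specializing $\beta = 1$ yields the desired finiteness for Lie reflective modular forms, which is the content of the corollary.

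There is essentially no obstacle to overcome in this last step: the entire analytic and geometric content has already been packaged into Theorem \ref{branch obstruction} via the chain Theorem \ref{branch obstruction} $\Rightarrow$ Corollary \ref{bounded slope}, the nontrivial translation being the averaging argument that turns a reflective modular form of slope $\leq r$ into a $\mathbb{Q}$-effective representative of $\beta(B/2) - \alpha \mathcal{L}$, thereby obstructing the bigness of $r^{-1}\mathcal{L} - B/2$. The only small remark worth recording is that the hypothesis $n \geq 4$ is inherited from Theorem \ref{branch obstruction} rather than being an artifact of this deduction, so the Gritsenko--Nikulin conjecture in the remaining low-rank case $n = 3$ falls outside the scope of the present method and would require a separate argument.
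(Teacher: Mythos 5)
Your proposal is correct and coincides with the paper's own (implicit) argument: the paper likewise treats Corollary \ref{Lie reflective} as an immediate specialization of Corollary \ref{bounded slope} with $\beta=1$, after recalling that a Lie reflective modular form is by definition a reflective modular form of vanishing order $\leq 1$ satisfying extra Fourier-coefficient conditions. Your closing remark that $n=3$ remains open for this method also matches the paper's stated scope.
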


In the singular weight case, reflective modular forms are classified in \cite{Sch1}, \cite{D-H-S}, \cite{Sch2} 
for a certain class of simple lattices.  


\section{Convention}\label{sec:convention}

We summarize basic definitions. 
By an (integral) \textit{lattice} $L$ we mean a free ${\Z}$-module of finite rank equipped with a nondegenerate symmetric bilinear form 
$(\: , ): L\times L\to {\Z}$. 
The lattice $L$ is said to be \textit{even} if $(l ,l)\in2{\Z}$ for every $l\in L$. 
The scaling $L(a)$ of a lattice $L$ by a natural number $a\geq1$ has the same underlying ${\Z}$-module as $L$, with the pairing multiplied by $a$. 
A lattice $L$ is said to be \textit{primitive} if it is not isometric to a scaling of any other lattice. 
A vector $l\in L$ is said to be \textit{primitive} if $L/{\Z}l$ is free. 
For such $l$, the positive generator of the ideal $(l, L)$ of ${\Z}$ is denoted by ${\rm div}(l)$. 
When $(l, l)\ne0$, the orthogonal splitting $L={\Z}l\oplus (l^{\perp}\cap L)$ holds if and only if ${\rm div}(l)=|(l, l)|$. 
The rank $2$ hyperbolic even unimodular lattice is called the \textit{hyperbolic plane} and will be denoted by $U$. 

The dual lattice of a lattice $L$ is written as $L^{\vee}$. 
The quotient group $A_L=L^{\vee}/L$ is called the \textit{discriminant group}. 
Its length is denoted by $l(A_L)$.  
$A_L$ is equipped with a natural ${\Q}/{\Z}$-valued symmetric bilinear form. 
When $L$ is even, this symmetric form comes from the ${\Q}/2{\Z}$-valued quadratic form $A_L\to{\Q}/2{\Z}$, 
$l+L \mapsto (l, l)+2{\Z}$,  
which we call the \textit{discriminant form} of $L$. 
In some literatures, scaling of this form by $1/2$ is called the discriminant form. 
The kernel of the natural map ${\Or}(L)\to{\OAL}$ is denoted by ${\Ost}(L)$ and called the \textit{stable orthogonal group}. 

The \textit{genus} of a lattice $L$ is the set of lattices $L'$ of the same signature as $L$ 
such that $L\otimes{\Zp}\simeq L'\otimes{\Zp}$ for every $p$. 
By the Hasse-Minkowski theorem, there is no loss of generality in assuming that $L'$ is contained in $L_{{\Q}}$. 
By Nikulin \cite{Ni}, two even lattices of the same signature are in the same genus if and only if their discriminant forms are isometric. 
Two lattices $L'$, $L''$ on $L_{{\Q}}$ are said to be \textit{properly equivalent} 
if $\gamma(L')=L''$ for some $\gamma\in{\rm SO}(L_{{\Q}})$. 
If we require only $\gamma\in{\rm O}(L_{{\Q}})$, this is equivalent to $L'\simeq L''$ (abstractly isometric).

Let $L$ be a lattice of signature $(2, n)$ with $n\geq3$. 
Let $\mathcal{O}(-1)\to {\DL}$ be the restriction of the tautological bundle over ${\proj}(L_{\C})$. 
The complement of the zero section in $\mathcal{O}(-1)$ is identified with 
the affine cone $\mathcal{D}_{L}^{\bullet}$ over ${\DL}$ (the vertex removed). 
A modular form of weight $k$ with respect to a finite-index subgroup $\Gamma$ of ${\Or}(L)$ is a 
$\Gamma$-invariant holomorphic section of $\mathcal{O}(-k)$. 
It corresponds to a $\Gamma$-invariant holomorphic function on $\mathcal{D}_{L}^{\bullet}$ 
that is homogeneous of degree $-k$ on each fiber of $\mathcal{D}_{L}^{\bullet}\to{\DL}$. 
We write $M_k(\Gamma)$ for the space of modular forms of weight $k$ with respect to $\Gamma$. 
When $\Gamma$ contains $-1$, we will consider only even weight $k$ because in that case modular forms of odd weight must be identically zero. 

Let $l\in L$ be a primitive isotropic vector, which corresponds to the $0$-dimensional rational boundary component ${\C}l$ of ${\DL}$. 
Let $M=l^{\perp}\cap L/{\Z}l$. 
Choose a vector $l'\in L_{{\Q}}$ with $(l, l')=1$, and identify $M_{{\Q}}$ with $\langle l, l'\rangle^{\perp}\cap L_{{\Q}}$. 
Let $M_{{\R}}^+$ be the positive cone in $M_{{\R}}$, i.e., 
one of the two components of $\{ m\in M_{{\R}}|(m, m)>0\}$,  
and $\mathcal{D}_l=M_{{\R}}+iM_{{\R}}^{+}$ be the associated tube domain. 
We have an embedding depending on $l'$ 
\begin{equation*}\label{eqn:tube domain realization}
\mathcal{D}_l \hookrightarrow \mathcal{D}_{L}^{\bullet}, \qquad v\mapsto l'+v-\frac{1}{2}((v, v)+(l', l'))l, 
\end{equation*}
whose image is $\{ \omega\in \mathcal{D}_{L}^{\bullet} | (\omega, l)=1 \}$ which gives a nowhere vanishing section of $\mathcal{O}(-1)$. 
This also induces an isomorphism $\mathcal{D}_l\simeq \mathcal{D}_{L}$ (tube domain realization). 
In this way, depending on the choice of $l'$, 
modular forms on ${\DL}$ are translated to holomorphic functions $F(Z)$ on $\mathcal{D}_l$. 
It is invariant under translation by a lattice $U(l)_{{\Z}}$ on $M_{{\Q}}$ (see the Appendix), 
hence admits a Fourier expansion of the form 
\begin{equation*}
F(Z) = \sum_{m\in U(l)_{{\Z}}^{\vee}} c(m) \chi^m, \qquad \chi^m=e^{2\pi i(m, Z)}.  
\end{equation*}
(This is expansion by characters on the torus $M_{{\C}}/U(l)_{{\Z}}$.) 
By the Koecher principle, we have $c(m)=0$ when $m\not\in \overline{M_{{\R}}^+}$. 
If $c(m)=0$ for all $m$ with $(m, m)=0$ at all primitive isotropic $l\in L$, this modular form is called a cusp form. 
The space of cusp forms is denoted by $S_k(\Gamma)\subset M_k(\Gamma)$.


\section{Construction of cusp form}\label{sec:cusp obstruction}

In this section we prove Theorems \ref{cusp obstruction} and \ref{cusp obstruction II} . 
We construct a desired cusp form via the Gritsenko-Borcherds lifting \cite{Gr}, \cite{Bo}. 
For Theorem \ref{cusp obstruction} we first make a reduction of lattice, 
and then construct the source cusp form explicitly using Eisenstein series. 
For Theorem \ref{cusp obstruction II}  we resort to Bruinier-Ehlen-Freitag's result \cite{B-E-F}.


\subsection{Reduction of lattice}\label{ssec:reduction cusp obstruction}

For the proof of Theorem \ref{cusp obstruction} we first simplify the given lattice using a classical reduction trick (cf.~\cite{Ge}, \cite{Vi}). 

\begin{lemma}\label{Vinberg reduction}
Let $L$ be a lattice of signature $(2, n)$. 
There exists a lattice $L'$ on $L_{{\Q}}$ such that 

(1) ${\Or}(L)\subset{\Or}(L')$ inside ${\Or}(L_{\Q})$ and 

(2) $L'$ is a scaling of a lattice $L''$ for which the $p$-component of $A_{L''}$ is $p$-elementary of length $\leq n/2+1$ for every $p$. 
\end{lemma}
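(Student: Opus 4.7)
The plan is a Vinberg-style prime-by-prime modification of $L$. For each prime $p$ I shall replace $L\otimes\Zp$ by an ${\rm O}(L)$-invariant overlattice in $L_{\Q}\otimes\Q_{p}$ whose Jordan decomposition is supported in only two consecutive $p$-adic scales. Gluing these modifications over primes via the local-to-global correspondence will produce an overlattice $L'\supset L$ in $L_{\Q}$, and rescaling $L'$ globally by an appropriate integer $N$ will yield the lattice $L''$ with $p$-elementary discriminant group of controlled length.

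The key local construction uses the canonical scale filtration
\[
L_{p}^{(\geq t)} := p^{t} L_{p}^{\vee}\cap L_{p},
\]
which is intrinsically defined and is therefore preserved by every $\sigma\in {\rm O}(L)$. I consider the two candidate overlattices inside $L_{\Q}\otimes\Q_{p}$:
\[
L_{p}^{(A)} := \sum_{t\geq 0} p^{-t}\,L_{p}^{(\geq 2t)}, \qquad L_{p}^{(B)} := \sum_{t\geq 0} p^{-t}\,L_{p}^{(\geq 2t+1)}.
\]
Using a $p$-adic Jordan decomposition $L_{p}=\bigoplus_{s} M_{s}(p^{s})$ with $M_{s}$ unimodular of rank $r_{s}$, a direct calculation gives $L_{p}^{(A)} = \bigoplus_{s} p^{-\lfloor s/2\rfloor} M_{s}$, which is integral with Jordan scales only in $\{p^{0},p^{1}\}$. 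Hence $L_{p}^{(A)}$ is $p$-elementary of length $\sum_{s\ \textrm{odd}} r_{s}$, while an analogous computation shows $L_{p}^{(B)}$ has scales in $\{p^{1},p^{2}\}$ and after rescaling by $p^{-1}$ is $p$-elementary of length $\sum_{s\ \textrm{even}} r_{s}$. Since these two sums total $\rk(L)=n+2$, the smaller is at most $(n+2)/2=n/2+1$.

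For the assembly, I choose at each prime $p$ the variant $L'_{p}\in\{L_{p}^{(A)},L_{p}^{(B)}\}$ yielding the smaller length and set $a_{p}\in\{0,1\}$ accordingly (taking $a_{p}=0$ and $L'_{p}=L_{p}$ at primes $p\nmid \det L$, which contribute nothing). Let $N:=\prod_{p} p^{a_{p}}$, a finite product, and let $L'\subset L_{\Q}$ be the unique overlattice of $L$ with $L'\otimes\Zp=L'_{p}$ for all $p$. The intrinsic character of the scale filtration gives ${\rm O}(L)\subset{\rm O}(L')$, which is condition (1). At each prime $p$ the form on $L'_{p}$ is divisible by $p^{a_{p}}$, hence the form on $L'$ is divisible by $N$, so $L'=L''(N)$ for an integral lattice $L''$; by the local Jordan analysis the $p$-component of $A_{L''}$ is $p$-elementary of length at most $n/2+1$, giving condition (2).

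The main technical point will be the $p=2$ case, where Jordan decompositions involve rank-two summands of types $U(2^{s})$ and $V(2^{s})$ and the orthogonal Jordan splitting is non-unique as an orthogonal splitting of submodules. The formulas for $L_{p}^{(A)}$ and $L_{p}^{(B)}$ remain canonical, since they invoke only the intrinsic filtration $L_{p}^{(\geq t)}$, so ${\rm O}(L)$-invariance is unaffected; but verifying that the outputs still have scales in $\{2^{0},2^{1}\}$ or $\{2^{1},2^{2}\}$ and that the parity pigeonhole still delivers the bound $n/2+1$ requires a case-by-case check through the classification of $2$-adic Jordan summand types.
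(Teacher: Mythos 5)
Your construction is essentially the paper's own proof, which simply cites Vinberg \S 8.5 (and Gerstein) and records the same canonical operations in iterated form ($L_{i+1}=L_i+p_i^{-1}L_i\cap p_iL_i^{\vee}$ from $L_1=L$, then $L'=L_N\cap aL_N^{\vee}$); your $L_p^{(A)}$ and $L_p^{(B)}$ are exactly the closed-form limits of that iteration at a prime $p$, and the ${\rm O}(L)$-invariance via the intrinsic filtration $p^tL_p^{\vee}\cap L_p$ is the intended argument. Two small corrections that do not affect the proof: $L_p^{(B)}$ contains $pM_0$ rather than $M_0$, so the glued lattice need not be an \emph{over}lattice of $L$ (harmless, since the local--global correspondence only needs $L'_p=L_p$ for almost all $p$, not containment), and the case-by-case check you defer at $p=2$ is not actually needed --- your computation uses only the existence of some orthogonal Jordan splitting into $2^s$-modular unimodular blocks, which holds at $p=2$, and the intrinsic definition of $L_p^{(A)}$, $L_p^{(B)}$ makes the non-uniqueness of that splitting irrelevant.
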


\begin{proof}
This is described in  \cite{Vi} \S 8.5 (see also \cite{Ge} p.198--199). 
It is useful to observe that $L'$ is obtained by inductively taking 
$L_{i+1}=L_i+p_{i}^{-1}L_i\cap p_iL_i^{\vee}$ from $L_1=L$, 
and finally taking $L'=L_N\cap a L_N^{\vee}$. 
\end{proof}

\begin{corollary}
Let $L$ be a lattice of signature $(2, n)$ with $n\geq11$. 
There exists a lattice $L_1$ on $L_{{\Q}}$ such that ${\Or}(L)\subset{\Or}(L_1)$ 
and that $L_1$ is a scaling of an even lattice $L_2$ containing $2U$. 
\end{corollary}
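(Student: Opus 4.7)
The plan is to apply Lemma 3.2 (Vinberg reduction) to $L$, make an adjustment at the prime $p=2$ to enforce evenness, and then invoke the same Nikulin splitting theorem used in the proof of the preceding lemma to split off $2U$.

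First I would apply Lemma 3.2 to $L$ to obtain a lattice $L'$ on $L_{\mathbb{Q}}$ with ${\rm O}(L) \subset {\rm O}(L')$ and $L' = L_2(a)$ for some lattice $L_2$ whose $p$-component of $A_{L_2}$ is $p$-elementary of length at most $n/2+1$ for every prime $p$. Since the length of a finite abelian group is the maximum of the lengths of its $p$-parts, this gives $\ell(A_{L_2}) \leq n/2+1$. If $L_2$ is already even, we are immediately reduced to the Nikulin step below.

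If $L_2$ is odd, the obstruction is entirely $2$-adic. I would replace $L_2$ by its canonical maximal even sublattice $(L_2)_{\mathrm{ev}} = \{\, l \in L_2 \mid (l,l) \in 2\mathbb{Z}\,\}$, which has index $2$ in $L_2$ and is preserved by every isometry of $L_2$, so that ${\rm O}(L) \subset {\rm O}(L_2) \subset {\rm O}((L_2)_{\mathrm{ev}})$. A direct $2$-adic check (the passage to an index-$2$ sublattice changes $A$ by an extension of a $2$-elementary group of rank at most $2$) shows $\ell(A_{(L_2)_{\mathrm{ev}}}) \leq n/2 + 3$. For $n \geq 11$ one verifies $\lfloor n/2 \rfloor + 3 \leq n-3$, which matches exactly the Nikulin length bound recalled in the previous lemma. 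Applying that theorem to the even lattice $(L_2)_{\mathrm{ev}}$ of signature $(2,n)$ then produces an orthogonal splitting $(L_2)_{\mathrm{ev}} \cong 2U \oplus L_3$. Taking $L_1$ to be the corresponding rescaling of $(L_2)_{\mathrm{ev}}$ on $L_{\mathbb{Q}}$ yields the conclusion.

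The main obstacle is controlling the growth of $\ell(A)$ during the evenness adjustment in Step 2: naive rescalings such as $L_2 \mapsto L_2(2)$ introduce an $(\mathbb{F}_2)^{n+2}$ contribution and destroy the length bound, so the maximal even sublattice is essentially forced on us. The numerical hypothesis $n \geq 11$ is exactly what is needed to absorb the worst-case $+2$ in Step 2 together with the Vinberg bound $n/2+1$ into the Nikulin bound $n-3$; smaller $n$ would require a sharper analysis of the interaction between the Vinberg process and the parity at $2$.
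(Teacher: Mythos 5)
Your proposal is correct and follows essentially the same route as the paper: apply the Vinberg reduction (Lemma 3.2), pass to the maximal even sublattice of the primitive model (which is canonical, hence preserves the inclusion of orthogonal groups), observe that this raises the length of the discriminant group by at most $2$ so that $l(A)\leq n/2+3$, and conclude by Nikulin's splitting criterion, with $n\geq11$ giving exactly the required margin. The only cosmetic difference is that you phrase Nikulin's condition as $l(A)\leq n-3$ where the paper writes the equivalent $\operatorname{rk}-l(A)\geq5$.
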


\begin{proof}
Let $L'$ and $L''$ be as in the lemma. 
Let $L_2\subset L''$ be the maximal even sublattice of $L''$ and 
$L_1\subset L'$ be the corresponding sublattice of $L'$. 
Since ${\Or}(L'')\subset {\Or}(L_2)$, we have ${\Or}(L')\subset {\Or}(L_1)$ and hence ${\Or}(L)\subset {\Or}(L_1)$. 
When $L''$ is even, we have $L_2=L''$; when $L''$ is odd, $A_{L''}$ is an index $2$ quotient of an index $2$ subgroup of $A_{L_2}$. 
Hence $l(A_{L_2})\leq l(A_{L''})+2\leq n/2+3$. 
Then ${\rm rk}(L_2)-l(A_{L_2})\geq5$ by our assumption $n\geq11$. 
By Nikulin's theory (\cite{Ni} Corollary 1.10.2), $L_2$ contains $2U$. 
\end{proof}

Note that we did not make full use of the property (2) in Lemma \ref{Vinberg reduction}. 
This will be used in \S \ref{ssec:bound of n}. 

We have a natural isomorphism 
\begin{equation}\label{eqn:identify domain}
\mathcal{D}_{L}^{\bullet} = \mathcal{D}_{L_1}^{\bullet} \simeq \mathcal{D}_{L_2}^{\bullet}, 
\end{equation}
where the first comes from the equality $L_{\Q}=(L_1)_{\Q}$ 
and the second from the identification $L_1=L_2$ as ${\Z}$-modules. 
The inclusion ${\Or}(L)\subset {\Or}(L_1)\simeq {\Or}(L_2)$ is compatible with this isomorphism.  
Note that the induced isomorphism $\mathcal{D}_{L}\simeq\mathcal{D}_{L_2}$ preserves the rational boundary components.

\begin{lemma}
Let $F$ be a cusp form on $\mathcal{D}_{L_2}$ with respect to ${\Or}(L_2)$. 
Via \eqref{eqn:identify domain}, $F$ gives a cusp form on ${\DL}$ of the same weight with respect to ${\Or}(L)$. 
\end{lemma}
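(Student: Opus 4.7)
The plan is to verify separately the two defining properties: the modular transformation law of weight $k$ with respect to $\Or(L)$, and the cuspidality condition at every $0$-dimensional rational boundary component of $\mathcal{D}_L$.

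For the transformation law, I would observe that the identification $\mathcal{D}_L^\bullet = \mathcal{D}_{L_1}^\bullet \simeq \mathcal{D}_{L_2}^\bullet$ arises from the equality of the ambient complex vector spaces $L_\C = (L_1)_\C = (L_2)_\C$, and hence carries the restricted tautological bundle $\mathcal{O}(-1)$ to itself. Scaling of a lattice does not affect its isometry group, so $\Or(L_1) = \Or(L_2)$ as subgroups of $\Or(L_\Q)$, and the inclusion $\Or(L) \subset \Or(L_1)$ is compatible with the identification of domains. Consequently the $\Or(L_2)$-invariant, degree $-k$ homogeneous function $F$ on $\mathcal{D}_{L_2}^\bullet$ pulls back to a $\Or(L)$-invariant, degree $-k$ homogeneous function on $\mathcal{D}_L^\bullet$, i.e.\ a modular form of weight $k$ for $\Or(L)$.

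For the cuspidality condition, I would exploit the remark just preceding the lemma that the isomorphism preserves rational boundary components. Given a primitive isotropic $l \in L$, let $l_2 \in L_2$ be the primitive generator of the rational line $\Q l = \Q l_2 \subset L_\Q$; both vectors determine the same $0$-dimensional cusp. A choice of $l' \in L_\Q$ dual to $l$ with respect to the $L$-pairing can be rescaled to $l_2' \in (L_2)_\Q$ dual to $l_2$ with respect to the $L_2$-pairing, and the two resulting tube domain embeddings $\mathcal{D}_l \hookrightarrow \mathcal{D}_L^\bullet$ and $\mathcal{D}_{l_2} \hookrightarrow \mathcal{D}_{L_2}^\bullet$ then differ on $M_\R \simeq (M_2)_\R$ only by a positive rational rescaling of affine coordinates, reflecting the scaling relation between the two pairings on $(L_1)_\Q = (L_2)_\Q$ and the ratio $l/l_2$. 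Since this rescaling carries the positive cone into itself and isotropic vectors to isotropic vectors, the Fourier expansions of $F$ at the two cusps are related by a corresponding positive rational rescaling of the index lattice, and the vanishing of Fourier coefficients at isotropic indices transfers term by term. Letting $l$ range over all primitive isotropic vectors of $L$ yields the cusp condition for $F$ with respect to $\Or(L)$.

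The main obstacle, though bookkeeping rather than conceptual, is to confirm that under the identification $M_\Q \simeq (M_2)_\Q$ the translation lattices $U(l)_\Z$ and $U(l_2)_\Z$ are indeed related by a positive rational scalar, so that the two Fourier expansions can be compared index by index; for this one must combine the intrinsic description of $U(l)_\Z$ recalled in the Appendix with the explicit scaling factors between $l$, $l_2$ and between the two pairings. Once this compatibility is in place, the argument above finishes the proof without further input.
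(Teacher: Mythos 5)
Your proposal is correct and follows essentially the same route as the paper: the weight-$k$ transformation law is immediate from the identification of affine cones, and the cusp condition is transferred by comparing the two tube domain realizations, which differ only by a positive rational rescaling, so that Fourier coefficients at isotropic indices correspond up to a nonzero constant. The one caveat is in your final paragraph: the translation lattices need not be related by a scalar, but this is not actually required --- what holds, and what suffices for the term-by-term comparison, is the inclusion $U(l)_{\Z}\subseteq U(l_2)_{\Z}$ coming from ${\Or}(L)\subseteq{\Or}(L_1)={\Or}(L_2)$, which guarantees that the expansion with respect to the finer translation lattice is supported on the coarser dual, where the coefficients agree up to the rescaling factor.
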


\begin{proof}
We check that $F$ is still a cusp form for ${\Or}(L_1)$. 
Let $l, l', M$ be as in the last paragraph of \S \ref{sec:convention} for $L_2$. 
For $L_1=L_2(a)$ we use $l'/a\in (L_1)_{{\Q}}$ in place of $l'\in (L_2)_{{\Q}}$. 
Then the tube domain realization of $\mathcal{D}_{L_1}$ differs from that of $\mathcal{D}_{L_2}$ by scalar multiplication by $a$, 
both on $M_{{\C}}$ and  $\mathcal{D}_{L_2}^{\bullet}$. 
Hence if we view $U(l)_{{\Z}}^{\vee}\subset M(a)_{{\Q}}$ naturally, the Fourier expansion of $F$ for $l, l'/a, L_1$ 
is multiplication by $a^k$ of the one for $l, l', L_2$. 
\end{proof}

In this way, for the proof of Theorem \ref{cusp obstruction}, we may (and do) assume in the rest of this section that 
$L$ is even and contains $2U$.


\subsection{Lifting}\label{ssec:lift}

Gritsenko-Borcherds additive lifting \cite{Gr}, \cite{Bo}, 
essentially equivalent to that of Oda \cite{Od} and Rallis-Schiffmann \cite{R-S} in a common situation, 
is a lifting from modular forms of one variable to orthogonal modular forms. 
We assume throughout that $L$ is an even lattice of signature $(2, n)$ with $n\geq3$ and contains $2U$. 
We fix an embedding $2U\hookrightarrow L$ and write $L$ in the form $L=2U\oplus K$ with $K$ negative-definite of rank $n-2$. 
We put $M=U\oplus K$. 
As explained in \S \ref{sec:convention}, via the splitting $L=U\oplus M$ 
we can identify ${\Ost}(L)$-modular forms with holomorphic functions $F$ on the tube domain $M_{{\R}}+iM_{{\R}}^+$. 
The lattice of parallel translation coincides to $M$, so the Fourier expansion has the form 
$F(Z) = \sum_{m} c(m) \chi^m$ 
where $m\in M^{\vee}\cap \overline{M_{{\R}}^+}$ (see \cite{Gr} \S 2). 

Let ${\Mp}$ be the metaplectic double cover of ${\SL}$. 
It is well-known that ${\Mp}$ is generated by the two elements 
\begin{equation*}
S=\left( \begin{pmatrix}0&-1\\ 1&0\end{pmatrix}, \sqrt{\tau} \right), \quad 
T=\left( \begin{pmatrix}1&1\\ 0&1\end{pmatrix}, 1 \right).  
\end{equation*}
Let ${\C}[A_L]$ be the group ring over $A_L$. 
If $\lambda\in A_L$, we write $\mathbf{e}_{\lambda}\in{\C}[A_L]$ for the corresponding basis vector. 
The Weil representation is a unitary representation 
\begin{equation*}
\rho_L : {\Mp}\to{\rm GL}({\C}[A_L]) 
\end{equation*} 
defined by 
\begin{eqnarray*}
\rho_L(T)(\mathbf{e}_{\lambda}) & = & e((\lambda, \lambda)/2)\mathbf{e}_{\lambda}, \\ 
\rho_L(S)(\mathbf{e}_{\lambda}) & = & \frac{\sqrt{-1}^{n/2-1}}{\sqrt{|A_L|}}\sum_{\mu\in A_L}e(-(\lambda, \mu))\mathbf{e}_{\mu}.
\end{eqnarray*}
Here $e(x)={\exp}(2\pi ix)$ for $x\in{\Q}/{\Z}$. 
The orthogonal group ${\OAL}$ of $A_L$ acts on ${\C}[A_L]$ by permuting the standard basis vectors $\mathbf{e}_{\lambda}$. 

\begin{lemma}\label{lem:Weil rep and O(A)action}
The permutation representation of ${\OAL}$ on ${\C}[A_L]$ commutes with the Weil representation. 
\end{lemma}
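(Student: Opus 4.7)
The plan is to verify the commutation on the two generators $S$ and $T$ of $\mathrm{Mp}_2(\mathbb{Z})$, since both $\rho_L$ and the permutation action of $\mathrm{O}(A_L)$ are linear on $\mathbb{C}[A_L]$ and the equality of two operators on all of $\mathrm{Mp}_2(\mathbb{Z})$ follows once it holds on generators. Fix $g \in \mathrm{O}(A_L)$, acting by $\mathbf{e}_{\lambda} \mapsto \mathbf{e}_{g\lambda}$.

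For $T$, I would directly compute both compositions on a basis vector $\mathbf{e}_{\lambda}$: applying $g \circ \rho_L(T)$ produces $e((\lambda,\lambda)/2)\,\mathbf{e}_{g\lambda}$, while $\rho_L(T)\circ g$ produces $e((g\lambda,g\lambda)/2)\,\mathbf{e}_{g\lambda}$. These agree because $g$ is an isometry of the discriminant form, hence preserves the $\mathbb{Q}/2\mathbb{Z}$-valued quadratic form that gives the $T$-eigenvalues.

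For $S$, I would write out $g \circ \rho_L(S)(\mathbf{e}_{\lambda})$ as $\frac{\sqrt{-1}^{n/2-1}}{\sqrt{|A_L|}}\sum_{\mu} e(-(\lambda,\mu))\,\mathbf{e}_{g\mu}$ and then reindex the summation by $\mu \mapsto g^{-1}\mu$; comparison with $\rho_L(S)\circ g(\mathbf{e}_{\lambda}) = \frac{\sqrt{-1}^{n/2-1}}{\sqrt{|A_L|}}\sum_{\mu} e(-(g\lambda,\mu))\,\mathbf{e}_{\mu}$ reduces to the identity $(\lambda, g^{-1}\mu) = (g\lambda, \mu)$ in $\mathbb{Q}/\mathbb{Z}$, which is immediate from the fact that $g$ is an isometry of the associated bilinear form.

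There is essentially no obstacle here; the only point requiring a moment of care is distinguishing the discriminant form from its polarization, so that both the quadratic values (used for $T$) and the bilinear values (used for $S$) are invariant under $\mathrm{O}(A_L)$. Because $\mathrm{O}(A_L)$ by definition preserves the former (and \emph{a fortiori} the latter), both checks go through, completing the proof.
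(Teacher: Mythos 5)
Your proposal is correct and follows essentially the same route as the paper: verify commutation on the generators $S$ and $T$, using invariance of the quadratic form for the $T$-eigenvalues and the reindexing $\mu\mapsto\gamma^{-1}\mu$ together with $(\lambda,\gamma^{-1}\mu)=(\gamma\lambda,\mu)$ for $S$. No gaps.
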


\begin{proof}
It suffices to check that  
\begin{equation*}\label{eqn:Weil rep and O(A)action}
\rho_L(T)\circ\gamma = \gamma\circ\rho_L(T), \qquad \rho_L(S)\circ\gamma = \gamma\circ\rho_L(S) 
\end{equation*}
for every $\gamma\in{\OAL}$. 
The first equality follows from 
\begin{equation*}
\rho_L(T)(\mathbf{e}_{\gamma\lambda}) = e((\gamma\lambda, \gamma\lambda)/2)\mathbf{e}_{\gamma\lambda} 
 = e((\lambda, \lambda)/2)\mathbf{e}_{\gamma\lambda} = \gamma(\rho_L(T)(\mathbf{e}_{\lambda})). 
\end{equation*}
The second follows from 
\begin{equation*}
\begin{split}
\sqrt{|A_L|}\sqrt{-1}^{1-n/2} \rho_L(S)(\mathbf{e}_{\gamma\lambda}) 
& = \sum_{\mu\in A_L}e(-(\gamma\lambda, \mu))\mathbf{e}_{\mu} 
= \sum_{\mu\in A_L}e(-(\lambda, \gamma^{-1}\mu))\mathbf{e}_{\mu} \\ 
& =  \sum_{\mu'\in A_L}e(-(\lambda, \mu'))\mathbf{e}_{\gamma\mu'} 
= \sqrt{|A_L|}\sqrt{-1}^{1-n/2} \gamma ( \rho_L(S)(\mathbf{e}_{\lambda})) 
\end{split}
\end{equation*}
where we put $\mu'=\gamma^{-1}\mu$. 
\end{proof}

Modular forms of type $\rho_L$ with respect to ${\Mp}$ have Fourier expansion of the form 
\begin{equation*}
f(\tau) = 
\sum_{\lambda\in A_L} \sum_{\begin{subarray}{c} n\geq0 \\ n\in(\lambda, \lambda)/2+{\Z}  \end{subarray}} 
c_{\lambda}(n)q^n\mathbf{e}_{\lambda}, \qquad  q=e^{2\pi i\tau}. 
\end{equation*}
If $l$ is an integral or half-integral weight such that $l\equiv n/2$ mod ${\Z}$, 
we write $M_l(\rho_L)$ for the space of modular forms of weight $l$ and type $\rho_L$, 
and $S_l(\rho_L)$ the subspace of cusp forms. 
By Lemma \ref{lem:Weil rep and O(A)action}, the group ${\OAL}$ acts on $M_l(\rho_L)$. 
Explicitly, if $f$ has Fourier expansion as above, then 
\begin{equation}\label{eqn:Fourier expansion under O(A)action 1}
(\gamma \cdot f)(\tau) = 
\sum_{\lambda, n} c_{\lambda}(n)q^n\mathbf{e}_{\gamma\lambda} = 
\sum_{\lambda, n} c_{\gamma^{-1}\lambda}(n)q^n\mathbf{e}_{\lambda}. 
\end{equation}
It is clear that this action preserves $S_l(\rho_L)$. 

We have a natural isomorphism ${\OAL}\simeq{\Or}(L)/{\Ost}(L)$ by Nikulin \cite{Ni}. 
Via this ${\OAL}$ also acts on $S_k({\Ost}(L))$ by the Petersson slash operator. 
Basic properties of the Gritsenko-Borcherds lifting, in a form we need, are summarized as follows. 

\begin{theorem}[Gritsenko \cite{Gr}, Borcherds \cite{Bo}]\label{thm:lifting}
Let $L$ be an even lattice of signature $(2, n)$ with $n\geq3$ containing $2U$. 
Write $L=2U\oplus K=U\oplus M$. 
Let $l$ be an integral or half-integral weight with $l\equiv n/2$ mod ${\Z}$. 
Then there exists an injective, ${\OAL}$-equivariant linear map 
\begin{equation}\label{eqn:Gritsenko lift}
S_l(\rho_L) \to S_k({\Ost}(L)), \qquad k=l+n/2-1. 
\end{equation}
If $F=\sum c(m)\chi^{m}$ is the lifting of $f=\sum c_{\lambda}(n)q^n\mathbf{e}_{\lambda}$, 
its Fourier coefficients are given by $c(0)=0$ and for $m\ne0 \in M^{\vee}\cap \overline{M_{{\R}}^+}$   
\begin{equation}\label{eqn:lift Fourier coeff}
c(m) = \sum_{\begin{subarray}{c} a\in{\N} \\ m/a\in M^{\vee} \end{subarray}} a^{k-1} c_{[m/a]}((m/a, m/a)/2), 
\end{equation}
where $[m/a]$ denotes the class in $A_M\simeq A_L$. 
\end{theorem}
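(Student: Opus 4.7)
The plan is to define the lift by the Fourier formula (\ref{eqn:lift Fourier coeff}) with $c(0)=0$, and then verify orthogonal modularity, cuspidality, $\OAL$-equivariance, and injectivity separately. Existence and modularity are the content of Gritsenko's and Borcherds' original construction; the remaining structural properties come essentially for free from the explicit formula.

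\emph{Construction and modularity.} Writing $L = 2U \oplus K = U \oplus M$ with $M = U \oplus K$, I would route through Jacobi forms of lattice $K$-index. The theta decomposition along the hyperbolic plane sitting inside $M$ identifies $S_l(\rho_L)$ with a space of Jacobi cusp forms $\phi_f$ of weight $k=l+n/2-1$ and index $1$ on $\HH \times (K \otimes {\C})$; Gritsenko's generalization of the Maass/Saito--Kurokawa lift then assembles $F = \sum_{a\geq 1} V_a \phi_f$ out of Hecke-type operators $V_a$ that raise the Jacobi index. Expanding term by term produces precisely the Fourier coefficients (\ref{eqn:lift Fourier coeff}). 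The main obstacle is invariance under the full $\Ost(L)$: translations by $M$ are built into the expansion, Jacobi modularity (invariance under the Heisenberg group of the first $U$) is inherited from $\phi_f$, and the remaining involution coming from the second $U$ (analogous to $\tau \leftrightarrow \omega$ in the classical Maass situation) is established via the Maass symmetry relations on Fourier coefficients, which are visible as the invariance of (\ref{eqn:lift Fourier coeff}) under $\mathrm{GL}_2({\Z})$-equivalence of representatives in the $U$-coordinate. This is where the bulk of the work lies and is the content of \cite{Gr}, \cite{Bo}.

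\emph{Cuspidality and equivariance.} If $m \in M^\vee$ is isotropic, then every $m/a$ is isotropic too, so (\ref{eqn:lift Fourier coeff}) gives $c(m) = \sum_a a^{k-1} c_{[m/a]}(0) = 0$ by cuspidality of $f$. This vanishing at the $0$-dimensional cusp attached to $L = U \oplus M$ propagates to all cusps via the $\Ost(L)$-invariance already established. For $\OAL$-equivariance, given $\gamma \in \OAL$, I use Nikulin's surjection $\Or(L) \to \OAL$ to pick a lift $\tilde\gamma$ preserving the splitting $L = U \oplus M$ and restricting to some $\tilde\gamma_M \in \Or(M)$ that induces $\gamma$ on $A_M \simeq A_L$. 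Since $\tilde\gamma_M$ is an isometry, $(\tilde\gamma_M m/a, \tilde\gamma_M m/a) = (m/a, m/a)$ and $[\tilde\gamma_M m/a] = \gamma[m/a]$; substituting into (\ref{eqn:lift Fourier coeff}) and comparing with (\ref{eqn:Fourier expansion under O(A)action 1}) yields $c_{\Lift(\gamma \cdot f)}(m) = c_{\Lift(f)}(\tilde\gamma_M^{-1} m)$, which is precisely the Petersson action of $\gamma$ on $\Lift(f)$.

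\emph{Injectivity.} Since $U$ is unimodular, $M^\vee = U \oplus K^\vee$, and for any $\lambda \in A_M \simeq A_L$ and any $n \in (\lambda,\lambda)/2 + {\Z}$ that is sufficiently positive, I can choose a representative $k \in K^\vee$ of $\lambda$ and a primitive vector $u \in U$ with $(u,u) = 2n - (k,k)$ lying in the positive cone of $U_{\R}$; then $m = u + k$ is primitive in $M^\vee \cap \overline{M_{\R}^+}$, has class $\lambda$ in $A_M$, and satisfies $(m,m)/2 = n$. For such primitive $m$ the sum in (\ref{eqn:lift Fourier coeff}) collapses to the single $a=1$ contribution $c(m) = c_\lambda(n)$, so $\Lift(f)=0$ forces every Fourier coefficient of $f$ to vanish, proving injectivity.
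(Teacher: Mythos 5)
Your overall architecture --- defer the analytic construction and $\Ost(L)$-modularity to \cite{Gr}, \cite{Bo}, then read off cuspidality, ${\OAL}$-equivariance and injectivity from the explicit Fourier formula \eqref{eqn:lift Fourier coeff} --- is exactly the paper's. Your equivariance argument (lift $\gamma$ to $\hat\gamma\in{\Or}(M)$ via Nikulin's surjectivity of ${\Or}(M)\to{\rm O}(A_M)={\OAL}$, substitute into \eqref{eqn:lift Fourier coeff}, compare with \eqref{eqn:Fourier expansion under O(A)action 1}) and your injectivity argument (evaluate at primitive $m=e+yf+k$ with $e$-coefficient $1$, so the sum collapses to $a=1$ and returns $c_\lambda(n)$) coincide with the paper's; the latter is precisely the paper's parenthetical ``look at the Fourier coefficients at $(1,{\Z},K^\vee)$,'' offered as an alternative to recovering the Jacobi form as the first Fourier--Jacobi coefficient.

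The genuine gap is in your cuspidality step. The computation that $c(m)=\sum_a a^{k-1}c_{[m/a]}(0)=0$ for isotropic $m\in M^\vee$ only establishes the cusp condition at the \emph{standard} $0$-dimensional cusp $l=e$ coming from the chosen copy of $U$, and your claim that this ``propagates to all cusps via the $\Ost(L)$-invariance already established'' is false in general: invariance only transports the vanishing to cusps in the same $\Ost(L)$-orbit. By the Eichler criterion, $\Ost(L)$-orbits of primitive isotropic vectors $l\in L$ are separated by ${\rm div}(l)$ and the class of $l/{\rm div}(l)$ in $A_L$, so as soon as $A_L\ne0$ there are typically several orbits, and the cusps with ${\rm div}(l)>1$ are not reached from the standard one. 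This is exactly why the paper is careful here: Gritsenko's argument settles the cusp condition only for maximal $L$ (where every $1$-dimensional cusp is $\Ost(L)$-equivalent to one adjacent to the standard $0$-dimensional cusp), and for general $L$ containing $2U$ one must invoke Borcherds' computation of the Fourier expansion of the lift at \emph{every} $0$-dimensional cusp, from which the vanishing of the isotropic coefficients is read off cusp by cusp. Your proof as written only covers the maximal case; to close the gap you need either that additional computation or an explicit restriction of the statement.
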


Let us add a few comments, because 
some of the properties stated above are scattered or only implicit in the literatures. 

(1) In \cite{Gr} Theorem 3.1, Gritsenko constructed the lifting in the form of Jacobi lifting, 
namely a lifting from Jacobi forms of weight $k$ and index $1$ for $K(-1)$ to ${\Ost}(L)$-modular forms of the same weight. 
Since those Jacobi forms canonically correspond to modular forms of type $\rho_L$ and weight $l=k-n/2+1$ 
(see \cite{Gr} p.1187--1188), 
his lifting can be interpreted as a lifting from modular forms of type $\rho_L$. 
Borcherds (\cite{Bo} Theorem 14.3) extended the lifting in this second form to general even lattices $L$ which does not necessarily contain $2U$. 
The formula \eqref{eqn:lift Fourier coeff} is obtained by combining explicit forms of the Jacobi lifting (\cite{Gr} p.1193) and 
that of the correspondence between Jacobi forms and modular forms of type $\rho_L$ (\cite{Gr} Lemma 2.3). 
This coincides with Borcherds' calculation of Fourier expansion of his lifting 
(loc.~cit.~item 5: his notation $M$, $K$, $n$, $\lambda$, $n\lambda$, $\delta$, $m^{+}$ is read 
$L$, $M$, $a$, $l/a$, $m$, $[m/a]$, $k$ here and $z$, $z'$ are the standard basis of $U$),  
so the two liftings indeed agree. 

(2) Injectivity: in Gritsenko's construction, the Jacobi form corresponding to a cusp form $f \in S_l(\rho_L)$ is recovered as 
the $1$st Fourier-Jacobi coefficient of the lifting of $f$ 
at the $1$-dimensional cusp associated to the chosen embedding $2U\subset L$. 
Thus the lifting map \eqref{eqn:Gritsenko lift} is injective in the present case. 
(This can also be checked directly by looking the Fourier coefficients at $(1, {\Z}, K^{\vee})$.) 
It is not known whether injectivity holds in general when $L$ does not contain $2U$. 

(3) Cusp condition: the property that the lifting of a cusp form is a cusp form is established in \cite{Gr} for maximal lattices $L$. 
Indeed, the Fourier expansion  \eqref{eqn:lift Fourier coeff} shows that 
$F$ vanishes at $1$-dimensional cusps adjacent to the standard $0$-dimensional cusp, 
and when $L$ is maximal, every $1$-dimensional cusp is ${\Ost}(L)$-equivalent to such a cusp. 
(In \cite{G-H-S1} this was extended to a wider class of lattices.)
Borcherds \cite{Bo}, in his formulation, calculated the Fourier expansion of $F$ at every $0$-dimensional cusp not necessarily coming from $U$. 
From his general formula one observes that the lifting of a cusp form is a cusp form. 
(In his notation: if $m=n\lambda\in K^{\vee}$ is isotropic, then $c_{\delta}(\lambda^2/2)=c_{\delta}(0)$ is zero for all possible $(n, \lambda, \delta)$, 
so the coefficient of $\chi^m=e((m, Z))$ is zero.) 
We note that for the Oda lifting this property was proved in \cite{Od} \S 6, Corollary 2. 

(4) ${\OAL}$-equivariance: the equivariance of the lifting with respect to ${\OAL}$ is implicit in \cite{Bo} but not stated explicitly. 
For completeness let us supplement a self-contained proof in case $L$ contains $2U$. 
Let $f=\sum c_{\lambda}(n)q^n\mathbf{e}_{\lambda}$ be a cusp form of type $\rho_L$ 
and $F=\sum c(m)\chi^{m}$ be its lifting. 
Let $\gamma\in{\OAL}$ be an isometry of $A_L$. 
By \eqref{eqn:Fourier expansion under O(A)action 1} and \eqref{eqn:lift Fourier coeff} 
the lifting of $\gamma^{-1} \cdot f$ has Fourier expansion $\sum c^{\gamma}(m)\chi^{m}$ where 
\begin{equation*}
c^{\gamma}(m) = \sum_{a|m} a^{k-1}c_{\gamma[m/a]}((m/a, m/a)/2). 
\end{equation*}
Since ${\Or}(M)\to{\rm O}(A_M)={\OAL}$ is surjective by \cite{Ni}, 
we can lift $\gamma$ to an isometry of the lattice $M$, say $\hat{\gamma}\in{\Or}(M)$. 
We have $m/a\in M^{\vee}$ if and only if $\hat{\gamma}m/a\in M^{\vee}$.
Therefore  
\begin{equation*}
c^{\gamma}(m) = \sum_{a|\hat{\gamma}m} a^{k-1} c_{[\hat{\gamma}m/a]}((\hat{\gamma}m/a, \hat{\gamma}m/a)/2) = c(\hat{\gamma}m). 
\end{equation*}
On the other hand, since the factor of automorphy on ${\Or}(M)\subset{\Or}(L)$ is constantly $1$, 
the Petersson slash operator by $\hat{\gamma}$ is just the ordinary pullback of functions on $M_{{\R}}+iM_{{\R}}^+$. 
Thus the lifting of $\gamma^{-1} \cdot f$ is equal to the Petersson slash of the lifting of $f$ by $\gamma$.

\subsection{Proof of Theorem \ref{cusp obstruction}}

Let us record a consequence of Theorem \ref{thm:lifting} in a ready-to-use form. 

\begin{corollary}\label{cor:O(AL) cusp form lift}
Let $L$ be an even lattice of signature $(2, n)$ with $n\geq3$ and containing $2U$. 
If there exists a nonzero, ${\OAL}$-invariant cusp form of type $\rho_L$ and weight $l$, 
we have a nonzero cusp form of weight $l+n/2-1$ with respect to ${\Or}(L)$. 
\end{corollary}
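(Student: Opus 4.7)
The plan is to apply Theorem \ref{thm:lifting} directly to the hypothesized cusp form and then exploit the equivariance of the lifting to promote invariance under $\widetilde{\mathrm{O}}^+(L)$ to invariance under the full $\mathrm{O}^+(L)$.

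First, let $f \in S_l(\rho_L)$ denote the nonzero $\mathrm{O}(A_L)$-invariant cusp form given by hypothesis. Since $L$ contains $2U$ and $n \geq 3$, Theorem \ref{thm:lifting} supplies an injective, $\mathrm{O}(A_L)$-equivariant linear map $S_l(\rho_L) \to S_k(\widetilde{\mathrm{O}}^+(L))$ with $k = l + n/2 - 1$. I would apply this map to $f$ to produce a form $F \in S_k(\widetilde{\mathrm{O}}^+(L))$; by injectivity, $F \neq 0$, and by equivariance together with the $\mathrm{O}(A_L)$-invariance of $f$, the lift $F$ is itself invariant under the natural action of $\mathrm{O}(A_L)$ on $S_k(\widetilde{\mathrm{O}}^+(L))$.

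Second, I would identify this $\mathrm{O}(A_L)$-action with the outer quotient action induced by $\mathrm{O}^+(L)$. The exact sequence
\begin{equation*}
1 \to \widetilde{\mathrm{O}}^+(L) \to \mathrm{O}^+(L) \to \mathrm{O}(A_L) \to 1
\end{equation*}
of Nikulin \cite{Ni} identifies the coset action of $\mathrm{O}^+(L)/\widetilde{\mathrm{O}}^+(L)$ on $S_k(\widetilde{\mathrm{O}}^+(L))$, defined via the Petersson slash operator, with the $\mathrm{O}(A_L)$-action used in Theorem \ref{thm:lifting}; comment (4) following Theorem \ref{thm:lifting} already verifies this compatibility by lifting each $\gamma \in \mathrm{O}(A_L)$ to $\hat{\gamma} \in \mathrm{O}(M) \subset \mathrm{O}^+(L)$ and comparing Fourier expansions. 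Consequently, an $\mathrm{O}(A_L)$-invariant element of $S_k(\widetilde{\mathrm{O}}^+(L))$ is invariant under every lift, hence $F$ lies in the subspace of $\mathrm{O}^+(L)$-invariants, i.e.\ $F \in S_k(\mathrm{O}^+(L))$.

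There is no real obstacle here beyond bookkeeping; the argument is simply a repackaging of Theorem \ref{thm:lifting} together with the identification $\mathrm{O}^+(L)/\widetilde{\mathrm{O}}^+(L) \simeq \mathrm{O}(A_L)$. The only point that requires a bit of care is confirming that the $\mathrm{O}(A_L)$-action on the target of the lifting is intertwined with the Petersson slash by lifts $\hat{\gamma} \in \mathrm{O}^+(L)$, but this is precisely what the trivial automorphy factor on $\mathrm{O}(M)$ ensures, as noted in comment (4) of the excerpt.
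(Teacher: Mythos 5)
Your proposal is correct and is essentially the argument the paper intends: the corollary is recorded as an immediate consequence of Theorem \ref{thm:lifting}, obtained by applying the injective, ${\OAL}$-equivariant lifting to the invariant form and then using the identification ${\OAL}\simeq{\Or}(L)/{\Ost}(L)$ (with the ${\OAL}$-action on $S_k({\Ost}(L))$ given by the Petersson slash) to conclude invariance under all of ${\Or}(L)$. The compatibility point you flag is exactly what comment (4) after Theorem \ref{thm:lifting} establishes, so no further work is needed.
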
 

We are thus reduced to constructing a cusp form of type $\rho_L$ invariant under  ${\OAL}$. 
We use Eisenstein series of Bruinier-Kuss \cite{B-K}. 

Let $l>2$ be a weight with $l+n/2-1\in2{\Z}$. 
The Eisenstein series $E_{l}^{L}(\tau)$ of weight $l$ and type $\rho_L$ is defined by (\cite{B-K} \S 4) 
\begin{equation*}
E_{l}^{L}(\tau) = \frac{1}{2}\sum_{(M, \phi)} \phi(\tau)^{-2l}\cdot \rho_L(M, \phi)^{-1}(\mathbf{e}_{0}), 
\end{equation*}
where $(M, \phi)$ runs over the coset $\langle T\rangle \backslash {\Mp}$. 
This series converges normally on $\mathbb{H}$ 
and gives a modular form of type $\rho_L$ and weight $l$ whose constant term is $2\mathbf{e}_{0}$. 
It is ${\OAL}$-invariant because $\mathbf{e}_{0}$ is fixed by ${\OAL}$ 
and the ${\OAL}$-action commutes with $\rho_L$ by Lemma \ref{lem:Weil rep and O(A)action}. 
If $E_{l}^{L}(\tau)=\sum c_{\lambda, l}(n)q^n\mathbf{e}_{\lambda}$ denotes the Fourier expansion, 
it is shown in \cite{B-K} Theorem 7 that the coefficients $c_{\lambda, l}(n)$ in $n>0$ are given by 
\begin{equation*}\label{eqn:sgn Fourier coeff Eisenstein}
(-1)^{(2l-2+n)/4} \times (\textrm{nonnegative rational number}). 
\end{equation*}
Note that the Eisenstein series in \cite{B-K} are rather for the dual representation of $\rho_L$. 
But the conversion is immediate because $\rho_L^{\vee}=\rho_{L(-1)}$ under the natural identification 
${\C}[A_L]^{\vee}={\C}[A_{L(-1)}]$ induced by the basis $\mathbf{e}_{\lambda}$. 
So our $E^{L}_{l}$ is $E_l$ for $L(-1)$ in the notation of \cite{B-K}.

Let $E_6(\tau) = 1-504q-\cdots$ be the classical scalar-valued Eisenstein series of weight $6$. 

\begin{lemma}\label{lem:construct source form} 
Choose a weight $l>2$ satisfying $l+n/2 \equiv 3$ mod $4$. 
Then 
\begin{equation}\label{eqn:construct source form} 
f = E_{l}^{L} \cdot E_6 - E_{l+6}^{L} 
\end{equation}
is a nonzero, ${\OAL}$-invariant cusp form of weight $l+6$ and type $\rho_{L}$. 
\end{lemma}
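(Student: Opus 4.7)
The plan is to verify, in sequence, the weight and type, the $\OAL$-invariance, the cusp condition, and nonvanishing. Since $E_6$ is a scalar-valued modular form of weight $6$ on $\SL$ (equivalently, of trivial type on $\Mp$), the product $E_l^L \cdot E_6$ lies in $M_{l+6}(\rho_L)$, and subtracting $E_{l+6}^L \in M_{l+6}(\rho_L)$ keeps us inside this space. Both $E_l^L$ and $E_{l+6}^L$ are $\OAL$-invariant (the defining series start from the $\OAL$-fixed vector $\mathbf{e}_0$, and Lemma~\ref{lem:Weil rep and O(A)action} propagates invariance through the averaging), and $E_6$ is scalar so $\OAL$ acts trivially on it; hence $f$ is $\OAL$-invariant.

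Because $\Mp$ has a single cusp, the cusp condition for vector-valued forms reduces to vanishing of the constant Fourier coefficient. Both $E_l^L$ and $E_{l+6}^L$ have constant term $2\mathbf{e}_0$ while $E_6$ has constant term $1$, so the constant term of $f$ is $2\mathbf{e}_0 \cdot 1 - 2\mathbf{e}_0 = 0$, placing $f$ in $S_{l+6}(\rho_L)^{\OAL}$.

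The main content is nonvanishing. I would extract the Fourier coefficient of $q\mathbf{e}_0$, which is well-defined since $(0,0)/2 = 0$ allows integer exponents in the $\mathbf{e}_0$-component. Writing $E_6 = 1 - 504q - \cdots$ and $(E_l^L)_0 = 2 + c_{0,l}(1)q + \cdots$, the $q\mathbf{e}_0$-coefficient of $E_l^L \cdot E_6$ equals $c_{0,l}(1)\cdot 1 + 2 \cdot (-504) = c_{0,l}(1) - 1008$, giving
\begin{equation*}
[q\mathbf{e}_0]\, f \;=\; c_{0,l}(1) - 1008 - c_{0,l+6}(1).
\end{equation*}
The Bruinier--Kuss sign $c_{\lambda,k}(m) \in (-1)^{(2k-2+n)/4}\,\mathbb{Q}_{\ge 0}$ for $m > 0$, combined with $l+n/2 \equiv 3 \pmod 4$, yields $(-1)^{(2l-2+n)/4} = -1$, so $c_{0,l}(1) \le 0$; raising the weight by $6$ shifts the exponent by $3$ and flips the sign, so $c_{0,l+6}(1) \ge 0$. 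Hence $[q\mathbf{e}_0]\,f \le -1008 < 0$, and $f \not\equiv 0$.

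The main obstacle is this last sign chase, and it is precisely the role of the parity hypothesis $l+n/2 \equiv 3 \pmod 4$: it forces the positive-index Fourier coefficients of $E_l^L$ and $E_{l+6}^L$ to have opposite signs, so that together with the negative coefficient $-504$ of $E_6$, the three contributions to $[q\mathbf{e}_0]f$ reinforce each other rather than risk cancellation.
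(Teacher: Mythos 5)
Your proposal is correct and follows essentially the same route as the paper: verify the constant term cancels to get the cusp property, note $\OAL$-invariance of both Eisenstein series, and then show nonvanishing by computing the $q\mathbf{e}_0$-coefficient as $c_{0,l}(1)-1008-c_{0,l+6}(1)$ and using the Bruinier--Kuss sign $(-1)^{(2l-2+n)/4}$ together with the congruence $l+n/2\equiv 3 \pmod 4$ to see the three contributions all push in the same (negative) direction. The only addition beyond the paper's argument is your explicit bookkeeping of weight/type and the sign-flip under the weight shift by $6$, which is consistent with what the paper leaves implicit.
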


\begin{proof}
The constant term of $f$ is equal to $1\cdot 2\mathbf{e}_0 - 2\mathbf{e}_0=0$, so $f$ is a cusp form. 
Since $E_{l}^{L}$ and $E_{l+6}^{L}$ are ${\OAL}$-invariant, so is $f$. 
To see the nonvanishing of $f$, we observe that the Fourier coefficient of $f$ at $q\mathbf{e}_0$ is calculated as  
\begin{equation}\label{eqn:Fourier coeff Eisenstein combi}
1\cdot c_{0,l}(1) -504\cdot2 -c_{0,l+6}(1). 
\end{equation}
By our choice of $l$, we have $c_{0,l}(1)\leq 0$ and $c_{0,l+6}(1)\geq 0$. 
Therefore \eqref{eqn:Fourier coeff Eisenstein combi} is nonzero, whence $f$ does not vanish. 
\end{proof}

According to the congruence of $n$ modulo $8$, 
the minimal weight $l>2$ satisfying $l+n/2\equiv 3$ mod $4$ is as in Table \ref{table l_0}. 
In particular, $l\leq6$. 

\begin{table}[h]
\caption{}\label{table l_0}
\begin{center} 
\begin{tabular}{c|c|c|c|c|c|c|c|c}
$n$ mod $8$ & $0$ & $1$    & $2$ & $3$     &  $4$ & $5$    & $6$ & $7$       \\ \hline  
$l$             & $3$ & $5/2$ & $6$ & $11/2$ & $5$ & $9/2$ & $4$ & $7/2$    \\ \hline  
\end{tabular}
\end{center}
\end{table}

If $n\geq21$ or $n=17$, we have $l+6< n/2+1$ for this value of $l$. 
Thus for every even lattice $L$ in this range, 
the cusp form $f$ defined by \eqref{eqn:construct source form}  has weight $< n/2+1$. 
By Corollary \ref{cor:O(AL) cusp form lift}, when $L$ contains $2U$, the lifting of $f$ is a nonzero cusp form for ${\Or}(L)$ of weight $<n$. 
This proves Theorem \ref{cusp obstruction} (1). 

When $4|n$ with $n\geq16$, $l=n/2-5$ satisfies the congruence $l+n/2 \equiv 3$ mod $4$ and $l>2$. 
Then $f$ has weight $n/2+1$, so its lifting is a cusp form of weight $n$ for ${\Or}(L)$. 
This proves Theorem \ref{cusp obstruction} (2).

\begin{remark}
One may also try other combination such as $E_{l}^{L}E_4-E_{l+4}^{L}$, 
but their nonvanishing seems nontrivial. 
There are lattices $L$ for which $E_{l}^{L}E_4=E_{l+4}^{L}$ for the minimal weight $l$, 
e.g., $II_{2,18}$, $II_{2,18}\oplus A_1$, $II_{2,18}\oplus A_2$. 
\end{remark}

\subsection{Proof of Theorem \ref{cusp obstruction II}}

In view of Theorem \ref{cusp obstruction}, it is sufficient to see the finiteness for each $5\leq n \leq20$. 
Let $n$ be fixed. 
Bruinier-Ehlen-Freitag \cite{B-E-F} recently estimated the dimension formula for $\rho_L$-valued cusp forms in \cite{Bo2}, \cite{Sk}. 
By \cite{B-E-F} Corollary 4.7, there are only finitely many finite quadratic forms $A$ of length $\leq n-2$ such that $S_l(\rho_A)=0$ for any $l\leq3$. 
By Nikulin \cite{Ni}, even lattices $L$ of signature $(2, n)$ containing $2U$ are determined by its discriminant form $A=A_L$. 
Hence for all but finitely many such lattices $L$ we have $S_l(\rho_L)\ne 0$ for some $l\leq 3 <n/2+1$. 
By taking the lifting, this proves Theorem \ref{cusp obstruction II}. 

\begin{remark}
The dimension formula for ${\rm O}(A)$-invariant cusp forms is more complicated, 
partly involving an equivariant version of Gauss sum. 
This Gauss sum will be studied in a future paper. 
\end{remark}


\section{Reflective obstruction}\label{sec:bigness setup}

This section is the start up of the proof of Theorem \ref{branch obstruction}. 
In \S \ref{ssec:branch divisor} we classify the branch divisors of ${\FL}$. 
In \S \ref{ssec:HM vol} we show that the ${\Q}$-divisor $a\mathcal{L}-B/2$ of ${\FL}$ is big 
if a certain inequality involving Hirzebruch-Mumford volumes holds. 
These volumes (or rather their ratio) will be estimated in \S \ref{sec:vol estimate I} and \S \ref{sec:vol estimate II}. 
The proof of Theorem \ref{branch obstruction} will be completed at \S \ref{ssec:proof bigness}. 

\subsection{The branch divisor}\label{ssec:branch divisor}

Let $L$ be a lattice of signature $(2, n)$ with $n\geq3$. 
Recall that the reflection $\sigma_l$ with respect to a primitive vector $l\in L$ with $(l, l)\ne0$ is defined by 
\begin{equation*}
\sigma_l : L_{{\Q}} \to L_{{\Q}}, \quad v\mapsto v-\frac{2(v, l)}{(l, l)} l. 
\end{equation*}
When $\sigma_l \in {\Or}(L)$, namely $\sigma_l$ preserves $L$ and $(l, l)<0$, 
the vector $l$ is called a \textit{reflective vector}. 
According to \cite{G-H-S1} Corollary 2.13, 
every irreducible component of the ramification divisor of ${\DL}\to{\FL}$ is 
the fixed divisor of a reflection $\sigma_l\in{\Or}(L)$, that is, the hyperplane section   
\begin{equation*}
{\proj}(K_{{\C}})\cap{\DL} = \mathcal{D}_{K}  \qquad \textrm{where} \; \; K=l^{\perp}\cap L. 
\end{equation*}
Hence classification of the branch divisors of ${\FL}$ is equivalent to 
that of ${\Or}(L)$-equivalence classes of reflective vectors. 
The starting point is the following well-known property. 

\begin{lemma}\label{lem:split/nonsplit}
Let $l\in L$ be a primitive vector with $(l, l)<0$ and $K=l^{\perp}\cap L$ be its orthogonal complement. 
Then $l$ is reflective if and only if either we have the splitting $L={\Z}l\oplus K$ or $L$ contains ${\Z}l\oplus K$ with index $2$. 
In the first case we have $(l, l)=-{\rm div}(l)$, and in the second case $(l, l)=-2{\rm div}(l)$. 
\end{lemma}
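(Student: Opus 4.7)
The plan is to translate reflectivity of $l$ into a divisibility relation between $(l,l)$ and ${\rm div}(l)$, and from that read off the lattice structure near $l$.

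I would first observe that since $l$ is primitive, the formula $\sigma_l(v) = v - \frac{2(v,l)}{(l,l)}l$ shows $\sigma_l$ preserves $L$ exactly when $\frac{2(v,l)}{(l,l)} \in {\Z}$ for every $v \in L$. As $v$ ranges over $L$, the values $(v,l)$ fill the ideal ${\rm div}(l){\Z}$, so the condition reduces to $|(l,l)|$ dividing $2{\rm div}(l)$. Combined with the automatic divisibility ${\rm div}(l) \mid |(l,l)|$, which follows from $(l,l) \in (l, L) = {\rm div}(l){\Z}$, reflectivity of $l$ is equivalent to $|(l,l)| \in \{{\rm div}(l), 2{\rm div}(l)\}$. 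Since $(l,l) < 0$ by hypothesis, these two possibilities are exactly the claimed equalities $(l,l) = -{\rm div}(l)$ and $(l,l) = -2{\rm div}(l)$.

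Next I would identify the sublattice structure in each case. The equality $|(l,l)| = {\rm div}(l)$ is, as recalled in the Convention section, the criterion for the orthogonal splitting $L = {\Z}l \oplus K$, so nothing further is needed. In the remaining case $|(l,l)| = 2{\rm div}(l)$, I would decompose any $v \in L$ as $v = \alpha l + w$ with $\alpha = (v,l)/(l,l) \in {\Q}$ and $w \in K_{{\Q}}$; the divisibility just established forces $\alpha \in \frac{1}{2}{\Z}$. The assignment $v \mapsto \alpha \bmod {\Z}$ is then a homomorphism $L \to \frac{1}{2}{\Z}/{\Z}$ whose kernel is precisely ${\Z}l \oplus K$ (integrality of $\alpha$ puts $w = v - \alpha l$ in $L \cap K_{{\Q}} = K$), and which is surjective because by definition of ${\rm div}(l)$ some $v_0 \in L$ realizes $(v_0, l) = {\rm div}(l)$, for which $\alpha = -1/2$. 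Hence $[L : {\Z}l \oplus K] = 2$.

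For the converse direction I would verify that each configuration conversely produces a reflection and the asserted value of $(l,l)$ by direct computation: in the splitting case $\sigma_l$ acts as $-1 \oplus 1$ on $L$, and one reads off ${\rm div}(l) = |(l,l)|$ from $\{(v,l) : v \in {\Z}l \oplus K\} = (l,l){\Z}$; in the index-$2$ case, choosing $v_1 \in L \setminus ({\Z}l \oplus K)$ and reducing $\mathrm{mod}\,({\Z}l \oplus K)$ shows $v_1 = \tfrac{1}{2}l + w_1$ after adjustment, so $(v_1,l) = (l,l)/2$ gives ${\rm div}(l) = |(l,l)|/2$, and $\sigma_l(v) = v - 2\alpha l$ lies in $L$ because $2\alpha \in {\Z}$. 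I do not foresee any substantial obstacle here; the only care required is to keep two divisibilities separate, namely ${\rm div}(l) \mid (v,l)$ (automatic from the definition) and $(l,l) \mid 2(v,l)$ (the content of reflectivity).
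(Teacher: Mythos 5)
Your proof is correct and follows essentially the same route as the paper's: both arguments hinge on the primitivity of $l$ forcing $2\,{\rm div}(l)/(l,l)\in{\Z}$, and both identify the index of ${\Z}l\oplus K$ in $L$ with the ratio $|(l,l)|/{\rm div}(l)$ (the paper states directly that $L/({\Z}l\oplus K)$ is cyclic of that order generated by a vector $l_0$ with $(l,l_0)={\rm div}(l)$, which is exactly your homomorphism $v\mapsto\alpha\bmod{\Z}$). The only cosmetic difference is the order of steps: you characterize reflectivity by the divisibility $|(l,l)|\mid 2\,{\rm div}(l)$ first and then read off the index, while the paper computes the index first and then bounds it using reflectivity.
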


\begin{proof}
The sublattice ${\Z}l\oplus K$ of $L$ consists of vectors $l'$ such that $(l, l)|(l, l')$. 
If we choose a vector $l_0\in L$ such that $(l, l_0)={\rm div}(l)$, 
the quotient group $L/({\Z}l\oplus K)$ is cyclic of order $-(l, l)/{\rm div}(l)$, generated by $l_0$. 
Suppose that the reflection $\sigma_l$ preserves $L$. 
Then the vector 
\begin{equation*}
l_0 -\sigma_l(l_0) = (2(l, l_0)/(l, l)) l = (2{\rm div}(l)/(l, l)) l 
\end{equation*}
is contained in $L$. 
The primitivity of $l$ implies $2{\rm div}(l)/(l, l)\in {\Z}$, 
so that $-(l, l)/{\rm div}(l)=1$ or $2$. 
Conversely, suppose that $L$ contains ${\Z}l\oplus K$ with index $\leq2$. 
By the above calculation $\sigma_l(l_0)$ is contained in $L$. 
Since ${\Z}l\oplus K$ is clearly preserved by $\sigma_l$, so is $L$. 
\end{proof}

According to this lemma, 
we shall say that a reflective vector $l$ is of \textit{split type} when $L={\Z}l\oplus K$, 
and \textit{non-split type} when ${\Z}l\oplus K$ is of index $2$ in $L$. 
We denote by ${\RI}$, ${\RII}$ 
the sets of ${\Or}(L)$-equivalence classes of reflective vectors 
of split type, non-split type respectively. 
The union ${\RI}\cup{\RII}$ corresponds to the set of irreducible components of the total branch divisor $B$ of ${\FL}$.

Each component is described as follows. 
Let $l\in L$ be a reflective vector and $B_l$ be the component of $B$ defined by $l$. 
Let $\Gamma_l < {\Or}(L)$ be the stabilizer of the vector $l$. 
We view $\Gamma_l$ as a subgroup of ${\Or}(K)$ naturally where $K=l^{\perp}\cap L$. 
Note that $\Gamma_l<{\Or}(K)$ contains $-1$ because $-\sigma_l$ fixes $l$ and restricts to $-1$ on $K$. 
The projection $\mathcal{D}_{K}\to B_l$ from the ramification divisor 
descends to a birational morphism $\Gamma_l\backslash\mathcal{D}_K \to B_l$. 
This gives the normalization of $B_l$. 

\begin{lemma}\label{stabilizer split/nonsplit}
The subgroup $\Gamma_l < {\Or}(K)$ is described as follows. 

(1) When $l$ is of split type, we have $\Gamma_l={\Or}(K)$. 

(2) When $l$ is of non-split type, $\Gamma_l$ is equal to the stabilizer of an order $2$ element of $A_K$. 
In particular, $[{\Or}(K):\Gamma_l] < 2^r$ where $r=l((A_K)_2)$. 
\end{lemma}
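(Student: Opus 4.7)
The plan is to analyze when an isometry of $K$ extends to an isometry of $L$ fixing $l$, using the two cases of Lemma \ref{lem:split/nonsplit}.

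For part (1), the argument is essentially automatic. Since $\Gamma_l$ stabilizes $l$, it preserves $K = l^{\perp}\cap L$, so the restriction gives an injection $\Gamma_l \hookrightarrow {\rm O}(K)$. Conversely, when $L = {\Z}l \oplus K$, any $\phi \in {\rm O}(K)$ extends to an isometry of $L$ by setting it equal to the identity on ${\Z}l$; the extension visibly fixes $l$ and hence lies in $\Gamma_l$. This gives $\Gamma_l = {\rm O}(K)$.

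For part (2), fix a vector $l_0 \in L$ representing the nontrivial coset in $L/({\Z}l \oplus K) \simeq {\Z}/2{\Z}$, and write $2l_0 = bl + q$ with $b\in {\Z}$ and $q \in K$. As in the proof of Lemma \ref{lem:split/nonsplit}, pairing with $l$ forces $b$ to be odd, and in particular $q \notin 2K$ (otherwise $l_0 - q/2 \in L$ would give $bl/2 \in L$, contradicting primitivity of $l$). Given $\phi \in {\rm O}(K)$, its natural extension $\phi \oplus {\rm id}$ to ${\Z}l \oplus K$ preserves $L$ (and then fixes $l$) if and only if it preserves the class $\overline{l_0} \in ({\Z}l \oplus K)^{\vee}/({\Z}l \oplus K)$, which reduces to the condition
\begin{equation*}
\phi(q/2) \equiv q/2 \pmod{K},
\end{equation*}
i.e., $\phi$ fixes the element $[q/2] \in A_K$. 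Since $q \in K \setminus 2K$, the class $[q/2]$ is a nonzero element of order $2$ in $A_K$.

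Thus $\Gamma_l$ coincides with the stabilizer in ${\rm O}(K)$ of the order $2$ element $[q/2] \in A_K$. For the index estimate, observe that the ${\rm O}(K)$-orbit of $[q/2]$ is contained in the set of elements of order dividing $2$ in $(A_K)_2$, a subgroup of order $2^r$ with $r = l((A_K)_2)$; excluding the zero element gives at most $2^r - 1 < 2^r$ elements, whence $[{\rm O}(K):\Gamma_l] < 2^r$. The only genuine content is the identification of $[q/2]$ and the verification that it has order exactly $2$, and neither step is delicate once $l_0$ has been chosen; part (1) is a one-line check, so I expect no real obstacle in carrying out the argument.
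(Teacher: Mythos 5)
Your proof is correct and follows essentially the same route as the paper: both identify $\Gamma_l$ in the non-split case as the stabilizer of the order-$2$ class in $A_K$ of the $K_{\Q}$-projection of a generator of $L/({\Z}l\oplus K)$, and bound the index by counting $2$-torsion elements of $A_K$ (your explicit check that this class is nonzero is a detail the paper leaves implicit). The only harmless discrepancy is that you phrase the extension criterion for $\phi\in{\rm O}(K)$ rather than $\phi\in{\Or}(K)$; since the lemma concerns $\Gamma_l<{\Or}(K)$, one should restrict to ${\Or}$ throughout, which changes nothing in the argument or the index bound.
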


\begin{proof}
The split case is obvious. 
When $l$ is of non-split type, 
we choose a vector $l_0\in L$ generating $L/({\Z}l\oplus K)\simeq{\Z}/2$  
and let $k_0\in K^{\vee}$ be its orthogonal projection to $K_{{\Q}}$.   
The element $x=[k_0]\in A_K$ is of order $2$. 
For $\gamma\in{\Or}(K)$ the isometry $({\rm id}, \gamma)$ of ${\Z}l\oplus K$ preserves $L$ if and only if it fixes the element $[l_0]=([l/2], x)$ of 
$A_{{\Z}l\oplus K}$. 
Hence $\Gamma_l < {\Or}(K)$ coincides with the stabilizer of $x$, and $[{\Or}(K):\Gamma_l]=|{\Or}(K)\cdot x|$. 
The orbit ${\Or}(K)\cdot x$ is contained in the set of order $2$ elements of $A_K$. 
\end{proof}


\subsection{Hirzebruch-Mumford volume}\label{ssec:HM vol}

Let $L$ be a lattice of signature $(2, n)$ with $n>0$. 
(This will be both $L$ and $K=l^{\perp}\cap L$ in \S \ref{ssec:branch divisor}.)  
Let $\Gamma < {\Or}(L)$ be a finite-index subgroup. 
Gritsenko-Hulek-Sankaran \cite{G-H-S2} introduced 
the Hirzebruch-Mumford volume ${\HM}(\Gamma)$ of $\Gamma$ following the proportionality principle of Hirzebruch and Mumford \cite{Mu0}. 
It determines the growth of the dimension of $M_k(\Gamma)$ by (\cite{G-H-S2} Proposition 1.2) 
\begin{equation}\label{eqn:HM volume and modular forms}
{\rm dim}M_k(\Gamma) = \frac{2}{n!}{\HM}(\Gamma) k^{n} + O(k^{n-1}). 
\end{equation}
We may adopt this as an equivalent definition of ${\HM}(\Gamma)$. 
If $\Gamma' < \Gamma$ is a finite-index subgroup, we have 
\begin{equation}\label{eqn:HM cofinite subgrp}
{\HM}(\Gamma') = [\langle \Gamma, -1\rangle : \langle \Gamma', -1\rangle]\cdot {\HM}(\Gamma). 
\end{equation}

Now let $L$ be a lattice of signature $(2, n)$ with $n\geq3$ 
for which we are studying whether the ${\Q}$-divisor $a\mathcal{L}-B/2$ of ${\FL}$ is big 
where $a\in{\Q}_{>0}$. 
We relate this problem to the comparison of the Hirzebruch-Mumford volumes between ${\Or}(L)$ and the branch divisors. 
If $l\in L$ is a reflective vector with orthogonal complement $K=l^{\perp}\cap L$, 
we consider the volume ratio 
\begin{equation*}\label{eqn:def vol ratio}
{\HMp}(L, K) := \frac{{\HMOKp}}{{\HMOLp}}. 
\end{equation*}

\begin{proposition}\label{prop:big via HM volume}
Let $L$ be a lattice of signature $(2, n)$ with $n\geq3$. 
Let $a>0$ be a rational number.  
The ${\Q}$-divisor $a\mathcal{L}-B/2$ of ${\FL}$ is big if we have 
\begin{equation}\label{eqn:big via HM volume}
\sum_{[l]\in\mathcal{R}_{{\rm I}}}{\HMp}(L, K) + 2^{n+1}\cdot \sum_{[l]\in\mathcal{R}_{{\rm II}}}{\HMp}(L, K)
< \left( 1+\frac{1}{a} \right)^{1-n} \cdot \frac{2a}{n}. 
\end{equation}
\end{proposition}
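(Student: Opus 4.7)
The plan is to prove bigness by estimating $h^0(\bar{\mathcal{F}_L},k(a\mathcal{L}-B/2))$ for $k$ a large even integer and showing it grows like $k^n$ under the stated inequality, generalizing the volume comparison strategy of \cite{G-H-S3}. By the Koecher principle such sections correspond bijectively to modular forms $F\in M_{ka}(\Or(L))$ whose vanishing multiplicity along each branch component $B_l\subset\FL$ is at least $k/2$, equivalently whose lift to $\DL$ vanishes to order at least $k$ along each mirror $\mathcal{D}_K$ with $K=l^{\perp}\cap L$.

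To bound the codimension of this subspace in $M_{ka}(\Or(L))$, fix a reflective class $[l]$ and note that the linear form $(\,\cdot\,,l)$ on $L_{\mathbb{C}}$ cuts out $\mathcal{D}_K$ as the zero of a section of $\mathcal{L}^{-1}$, so the conormal bundle of $\mathcal{D}_K\subset\DL$ is $\mathcal{L}|_{\mathcal{D}_K}$. Hence the $j$-th normal Taylor coefficient of $F$ along $\mathcal{D}_K$ lies in $H^0(\mathcal{D}_K,\mathcal{L}^{ka+j})$ and is $\Gamma_l$-invariant, i.e., a $\Gamma_l$-modular form on $\mathcal{D}_K$ of weight $ka+j$. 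Because $\sigma_l$ acts by $-1$ on the normal direction while fixing $\mathcal{D}_K$ pointwise, only coefficients of even order can be nonzero. Writing $k=2m$, the condition that $F$ vanish to order $\geq k$ on $\mathcal{D}_K$ reduces to the vanishing of $m$ successive even-order coefficients, giving
\begin{equation*}
h^0(k(a\mathcal{L}-B/2)) \;\geq\; \dim M_{ka}(\Or(L))-\sum_{[l]}\sum_{j=0}^{m-1}\dim M_{ka+2j}(\Gamma_l).
\end{equation*}

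Next I apply the Hirzebruch-Mumford growth formula \eqref{eqn:HM volume and modular forms} to each term: $\dim M_{ka}(\Or(L))=\tfrac{2}{n!}\HMOLp\cdot(ka)^n+O(k^{n-1})$ and $\dim M_{ka+2j}(\Gamma_l)=\tfrac{2}{(n-1)!}\HM(\Gamma_l)(ka+2j)^{n-1}+O(k^{n-2})$. Bounding the inner sum crudely by $\sum_{j=0}^{m-1}(ka+2j)^{n-1}\leq m(ka+k)^{n-1}=\tfrac{k^n}{2}(a+1)^{n-1}$ yields a closed-form estimate whose $k^n$-coefficient equals
\begin{equation*}
\tfrac{2a^n}{n!}\HMOLp-\tfrac{(a+1)^{n-1}}{(n-1)!}\sum_{[l]}\HM(\Gamma_l).
\end{equation*}
Positivity of this coefficient is equivalent to $\sum_{[l]}\HM(\Gamma_l)/\HMOLp<\tfrac{2a^n}{n(a+1)^{n-1}}=\tfrac{2a}{n}(1+1/a)^{1-n}$ and, together with the error bound, guarantees bigness.

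Finally, I replace $\HM(\Gamma_l)$ by $\HMOKp$ using Lemma \ref{stabilizer split/nonsplit} and \eqref{eqn:HM cofinite subgrp}. For $[l]\in\RI$ one has $\Gamma_l=\Or(K)$, so $\HM(\Gamma_l)=\HMOKp$. For $[l]\in\RII$ the index $[\Or(K):\Gamma_l]$ is strictly less than $2^{l((A_K)_2)}\leq 2^{n+1}$, so $\HM(\Gamma_l)<2^{n+1}\HMOKp$. Substituting these bounds into the previous inequality produces exactly \eqref{eqn:big via HM volume}. The key technical point is the codimension estimate via normal jets combined with the $\sigma_l$-invariance that halves the jet count; everything else is Hirzebruch-Mumford bookkeeping together with the elementary convexity estimate of the inner sum.
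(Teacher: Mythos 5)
Your argument is correct and follows essentially the same route as the paper: the jet/normal-Taylor-coefficient description of the codimension bound is just the quasi-pullback map $F\mapsto (F/(\cdot,l)^{2j})|_{\mathcal{D}_K}$ of \cite{G-H-S3} in different words, and the Hirzebruch--Mumford bookkeeping, the convexity bound on the inner sum, and the final split/non-split substitution via Lemma \ref{stabilizer split/nonsplit} and \eqref{eqn:HM cofinite subgrp} all coincide with the paper's proof.
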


\begin{proof}
By definition, $a\mathcal{L}-B/2$ is big if we could show that an estimate 
\begin{equation}\label{eqn:bigness estimate}
h^0(ka\mathcal{L}-(k/2)B) > c\cdot k^n  
\end{equation}
holds for some $c>0$ in $k>>0$, 
where $k$ runs so that both $k$ and $ka$ are even numbers. 
We shall bound the left-hand side from below. 
Choose representatives $l_1,\cdots, l_r\in L$ for ${\RI}\cup{\RII}$. 
Let $K_i=l_i^{\perp}\cap L$ and $\Gamma_i < {\Or}(K_i)$ be the stabilizer of $l_i$.   
The following is essentially proved in \cite{G-H-S3} Proposition 4.1.  

\begin{lemma}\label{quasi-pullback estimate}
When both $k$ and $ka$ are even numbers, we have 
\begin{equation}\label{eqn:quasipullback} 
h^0(ka\mathcal{L}-(k/2)B) \geq {\dim}M_{ka}({\Or}(L)) - \sum_{i=1}^{r}\sum_{j=0}^{k/2-1}{\dim}M_{ka+2j}(\Gamma_i). 
\end{equation}
\end{lemma}

\begin{proof}
For a nonnegative integer $j\geq0$, 
$H^0(ka\mathcal{L}-jB)$ is the space of ${\Or}(L)$-modular forms of weight $ka$ 
which have zero of order $\geq 2j$ along every $\mathcal{D}_{K_i}$. 
The quasi-pullback of such modular forms to $\mathcal{D}_{K_i}$ is defined by (\cite{B-K-P-SB}, \cite{G-H-S3}) 
\begin{equation}\label{eqn:quasi-pullback}
H^0(ka\mathcal{L}-jB) \to M_{ka+2j}(\Gamma_i), \qquad F\mapsto (F/(\cdot, l_i)^{2j})|_{\mathcal{D}_{K_i}}. 
\end{equation}
Note that the vanishing order of $F$ along $\mathcal{D}_{K_i}$ must be even because $\Gamma_i$ contains $-1$.  
We obtain from \eqref{eqn:quasi-pullback} the exact sequence  
\begin{equation*}
0 \to H^0(ka\mathcal{L}-(j+1)B) \to H^0(ka\mathcal{L}-jB) \to \bigoplus_{i=1}^{r}M_{ka+2j}(\Gamma_i). 
\end{equation*}
Iteration of this for $j=0,\cdots, k/2-1$ gives the desired inequality. 
\end{proof}

We study asymptotic behavior of the right-hand side of \eqref{eqn:quasipullback} 
with respect to $k$. 
For the first term, we have by \eqref{eqn:HM volume and modular forms}  
\begin{equation*}
{\dim}M_{ka}({\Or}(L)) = (2/n!) \cdot {\HMOLp} \cdot a^n \cdot k^n + O(k^{n-1}). 
\end{equation*}
The second term is estimated as 
\begin{eqnarray*}
& & \sum_{i=1}^{r}\sum_{j=0}^{k/2-1}{\dim}M_{ka+2j}(\Gamma_i)  \\ 
&=& \sum_{i=1}^{r}\sum_{j=0}^{k/2-1}\left\{ \frac{2}{(n-1)!}\cdot{\HM}(\Gamma_i)\cdot 
            (ka+2j)^{n-1}+O(k^{n-2})\right\}  \\ 
&\leq& \sum_{i=1}^{r}\frac{k}{2}\cdot\left\{ \frac{2}{(n-1)!}\cdot{\HM}(\Gamma_i)\cdot 
           (a+1)^{n-1}\cdot k^{n-1}+O(k^{n-2})\right\}.  \\ 
&=& \frac{1}{(n-1)!}\cdot \left(\sum_{i=1}^{r}{\HM}(\Gamma_i) \right) \cdot(a+1)^{n-1}\cdot k^n + O(k^{n-1}). 
\end{eqnarray*}
Comparing the coefficients of $k^n$ in these two asymptotics, 
we see that \eqref{eqn:bigness estimate} holds if 
\begin{equation*}\label{eqn:key inequality}
\sum_{i=1}^{r} \frac{{\HM}(\Gamma_i)}{{\HM}({\Or}(L))} < \left( 1+\frac{1}{a} \right)^{1-n} \cdot \frac{2a}{n}. 
\end{equation*}

It remains to classify $l_1,\cdots, l_r$ by split/non-split type. 
We have $\Gamma_i={\Or}(K_i)$ if $l_i$ is of split type. 
When $l_i$ is of non-split type, we have 
\begin{equation*}
{\HM}(\Gamma_i) = [{\Or}(K_i):\Gamma_i] \cdot {\HM}({\Or}(K_i)) < 2^{n+1}\cdot{\HM}({\Or}(K_i)) 
\end{equation*} 
by \eqref{eqn:HM cofinite subgrp} and Lemma \ref{stabilizer split/nonsplit}. 
\end{proof}

We use the relation \eqref{eqn:HM cofinite subgrp} to extend the definition formally to ${\rm O}(L)$ 
\begin{equation*}
{\HMOL} := {\HMOLp}/[{\rm O}(L):{\Or}(L)]. 
\end{equation*}
It is often convenient to consider the following variant of ${\HMp}(L, K)$ 
\begin{equation*}\label{eqn:def HM ratio nonplus}
{\HM}(L, K) := \frac{{\HMOK}}{{\HMOL}}. 
\end{equation*}
The quotient 
\begin{equation}\label{eqn:vol ratio spinor diff}
\frac{{\HM}(L, K)}{{\HMp}(L, K)} = \frac{[{\rm O}(L):{\Or}(L)]}{[{\rm O}(K):{\Or}(K)]}
\end{equation}
is equal to $1$ or $2$ or $1/2$. 


\section{Single volume estimate}\label{sec:vol estimate I}

By Proposition \ref{prop:big via HM volume}, to show that $a\mathcal{L}-B/2$ is big is reduced to 
estimating the sum of the volume ratios ${\HMp}(L, K)$. 
In order to deduce the finiteness as in Theorem \ref{branch obstruction}, 
we want to estimate it for primitive lattices $L$ in a way that reflects the ``size'' of $L$. 
This is the task of \S \ref{sec:vol estimate I} and \S \ref{sec:vol estimate II}. 
In this \S \ref{sec:vol estimate I} we estimate ${\HM}(L, K)$ for each reflective vector, 
and in the next \S \ref{sec:vol estimate II} we take their sum over all components of the branch divisor. 
The final result is Propositions \ref{prop: vol split final}, \ref{prop: vol nonsplit final} and \eqref{eqn:vol final}, 
where the dimension $n$ and the exponent $D(L)$ of $A_L$ play the role of measuring the size of $L$. 
Derivation of Theorem \ref{branch obstruction} from these estimates is done in \S \ref{ssec:proof bigness}, 
which we encourage the reader to read before going to the technical detail of the estimate. 

The central idea of \S \ref{sec:vol estimate I} and \S \ref{sec:vol estimate II} is 
to reserve the reflection of $n$ and $D(L)$ through the whole process of estimate. 
Some step in \S \ref{sec:vol estimate I} might seem indirect, 
but they are designed so that we can finally obtain a reasonable bound in \S \ref{sec:vol estimate II}. 

A word on primitivity assumption: 
in each subsection (except \S \ref{ssec:proof bigness}) we will not assume that the given lattice $L$ is primitive until the final step.  
This is not for the sake of generality, but rather is an indispensable piece in the proof for the non-split case. 

Throughout we write $D(L)$ for the exponent of the discriminant group $A_L$ of a lattice $L$. 
Clearly $D(L)$ divides $|A_L|$, and the set of prime divisors of $D(L)$ equals that of $|A_L|$.

\subsection{Volume formula}\label{ssec:vol formula}

In \cite{G-H-S2}, 
Gritsenko-Hulek-Sankaran derived an exact formula for the Hirzebruch-Mumford volume 
by carefully comparing various volume formulae related to orthogonal groups. 
Let $L$ be a lattice of signature $(2, n)$ with $n>0$. 
We write $g_{sp}^{+}(L)$ for the number of proper spinor genera in the genus of $L$. 
Since $L$ is indefinite of rank $\geq3$, 
proper spinor genus coincides with proper equivalence class (\cite{Ki} Theorem 6.3.2). 
For each prime $p$ we write $\alpha_p(L)$ for the local density of the ${\Zp}$-lattice $L\otimes{\Zp}$. 
This is also denoted as $\alpha_p(L, L)$ in literatures (cf.~\cite{Ki} p.98). 

\begin{theorem}[\cite{G-H-S2} Theorem 2.1]\label{thm:GHS formula}
Let $L$ be a lattice of signature $(2, n)$ with $n>0$. 
Then  
\begin{equation}\label{eqn:GHS formula}
{\HMOL} = 
\frac{2}{g_{sp}^{+}(L)} \cdot |A_L|^{(n+3)/2} \cdot 
\prod_{k=1}^{n+2} \pi^{-k/2}\Gamma(k/2) \prod_{p} \alpha_p(L)^{-1}, 
\end{equation}
where $\Gamma(m)$ is the Gamma function. 
\end{theorem}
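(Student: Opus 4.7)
The plan is to combine three classical ingredients. First, I would invoke Hirzebruch's proportionality principle \cite{Mu0}: on any Hermitian symmetric domain the invariant volume is proportional to a characteristic-class invariant of its compact dual. For the signature-$(2,n)$ type IV domain the compact dual is the smooth $n$-dimensional quadric in $\mathbb{P}^{n+1}$, whose Chern numbers are explicitly computable. Combined with the standard interpretation \eqref{eqn:HM volume and modular forms} of ${\rm vol}_{HM}$ via growth of ${\dim}M_k$, this reduces the computation of ${\HMOLp}$ to that of $\mathrm{vol}({\Or}(L)\backslash{\Or}(L_{\R}))$ for a specific Haar-measure normalization, and it is also the source of the archimedean factor $\prod_{k=1}^{n+2}\pi^{-k/2}\Gamma(k/2)$.

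Second, I would apply the Smith--Minkowski--Siegel mass formula, most cleanly in its adelic form using that the Tamagawa number of the simply connected cover of ${\rm SO}(L_{\Q})$ equals $2$. With the Tamagawa measure, the product of local volumes $\prod_p \mathrm{vol}({\rm O}(L\otimes\Zp))$ equals $|A_L|^{-(n+3)/2}\prod_p\alpha_p(L)$; inverting this and pairing it with the archimedean factor from the first step yields the mass of the genus, namely the sum of $\mathrm{vol}({\Or}(L')\backslash{\Or}(L_{\R}))$ over proper equivalence classes $[L']$ in the genus of $L$. The exponent $(n+3)/2=(\rk L+1)/2$ is precisely the discrepancy between a self-dual $p$-adic measure and the one induced by the form on $L$.

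Third, I would descend from a genus sum to a single volume. Since $L$ is indefinite of rank $\geq 3$, Eichler--Kneser strong approximation (\cite{Ki} Theorem 6.3.2) identifies proper equivalence classes with proper spinor genera, and all proper spinor genera in a fixed genus share a common ${\Or}$-volume. Thus the genus mass equals $g_{sp}^{+}(L)$ times the volume attached to $L$ itself, producing the $1/g_{sp}^{+}(L)$ factor. The front factor $2$ then arises from the Tamagawa number $2$ combined with the passage between ${\Or}(L)$ and its intersection with ${\rm SO}$, which is also consistent with the conventional relation $\HMOL = {\HMOLp}/[{\rm O}(L):{\Or}(L)]$.

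The main obstacle is not any single ingredient --- each is classical --- but the normalization bookkeeping needed to glue them together. One has to pin down a single Haar measure on ${\rm O}(L_{\R})$, express with respect to it both Hirzebruch's invariant volume on ${\DL}$ and the product of local volumes appearing in Siegel's formula, and verify that after these comparisons the exponent of $|A_L|$ comes out as $(n+3)/2$ rather than $(n+2)/2$, while the $\Gamma$-factors and powers of $\pi$ assemble into the stated product over $k=1,\dots,n+2$. Tracking the signature-dependent signs, the quadratic reciprocity contributions hidden in $\alpha_p$, and the factors of $2$ between ${\rm O}$, ${\rm SO}$ and ${\rm Spin}$ so that they all cancel correctly is the delicate part of the argument.
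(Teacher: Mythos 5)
The paper itself gives no proof of this statement---it is imported verbatim as Theorem 2.1 of \cite{G-H-S2}---and your outline (Hirzebruch--Mumford proportionality \cite{Mu0} for the archimedean factor $\prod_{k=1}^{n+2}\pi^{-k/2}\Gamma(k/2)$, the Smith--Minkowski--Siegel mass formula for the local densities $\alpha_p(L)$ and the $|A_L|^{(n+3)/2}$ normalization discrepancy, and strong approximation via \cite{Ki} Theorem 6.3.2 to trade the genus mass for $g_{sp}^{+}(L)$ times a single volume) is exactly the strategy of that reference. One small correction: the Tamagawa number $2$ is that of ${\rm SO}$ itself, not of its simply connected cover ${\rm Spin}$ (whose Tamagawa number is $1$); this slip does not affect the structure of the argument, only the bookkeeping you already flag as the delicate part.
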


Computation of the formula \eqref{eqn:GHS formula} amounts to that of 
the spinor class number $g_{sp}^{+}(L)$ and the local densities $\alpha_p(L)$. 
Below we use the notation 
\begin{equation*}
L\otimes{\Zp} = \bigoplus_{j\geq0} L_{p,j}(p^j) , \qquad {\rm rk}(L_{p,j})=n_{p,j}(L)
\end{equation*} 
for a Jordan decomposition of $L\otimes{\Zp}$. 
Each $L_{p,j}$ is a unimodular ${\Zp}$-lattice. 
When $p>2$, Jordan decomposition is unique up to isometry. 
For $p=2$, $n_{2,j}(L)$ and whether $L_{2,j}$ is even or odd are uniquely determined. 
See \cite{Ki} \S 5.3 and \cite{Ge} \S 8.3. 

Let $P$ be the set of  \label{def:P}
odd prime divisors $p$ of $D(L)$ for which $n_{p,j}(L)\leq1$ for all $j$. 
We will later use the following estimate of $g_{sp}^{+}(L)$. 

\begin{lemma}\label{lem:estimate class number}
We have 
\begin{equation*}
g_{sp}^{+}(L) \leq 4\cdot 2^{|P|}. 
\end{equation*}
\end{lemma}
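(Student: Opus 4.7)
The plan is to invoke the adelic description of proper spinor genera. Because $L$ is indefinite of rank $n+2 \geq 3$, proper spinor genus coincides with proper equivalence class, and the standard formula (see Cassels or Kitaoka) gives
\begin{equation*}
g_{sp}^{+}(L) \;=\; \left[\, J_{\Q} \,:\, \Q^{\times} \cdot J_{\Q}^{2} \cdot \prod_{v} \theta(\Or(L\otimes \Z_{v})) \,\right],
\end{equation*}
where $\theta$ is the spinor norm, $v$ runs over all places of $\Q$, and at infinity we use $\R^{\times}$. Since $L$ is indefinite the archimedean factor is the full $\R^{\times}$, and since $\Q$ has class number one the quotient $J_{\Q}/\Q^{\times}J_{\Q}^{2}$ is a quotient of $\prod_{p}\Z_{p}^{\times}/(\Z_{p}^{\times})^{2}$ by the signs of global units. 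Thus it suffices to estimate how much each local spinor-norm group $\theta(\Or(L\otimes\Z_{p}))$ fails to fill up $\Z_{p}^{\times}/(\Z_{p}^{\times})^{2}$ times squares.

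First I would handle the odd primes one by one. For odd $p$ with $p\nmid D(L)$, the lattice $L\otimes\Z_{p}$ is unimodular of rank $\geq 3$, and Kneser's theorem gives $\theta(\Or(L\otimes\Z_{p})) \supseteq \Z_{p}^{\times}(\Q_{p}^{\times})^{2}$, so the local contribution is trivial. For odd $p\mid D(L)$ with $p\notin P$, some Jordan component $L_{p,j}$ has rank $\geq 2$; I would use that a unimodular $\Z_{p}$-lattice of rank $\geq 2$ represents units of both square classes, and that products of pairs of reflections in this component already realize every class in $\Z_{p}^{\times}/(\Z_{p}^{\times})^{2}$. Hence again the contribution is trivial. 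For $p \in P$, every Jordan component is one-dimensional, so the spinor norms coming from reflections are exactly the values $p^{j}u_{j}$ on the rank-one pieces; taking even products modulo squares sweeps out a subgroup of $\Z_{p}^{\times}/(\Z_{p}^{\times})^{2}$ of index at most $2$, contributing a factor of at most $2$ at this prime.

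It remains to absorb the prime $p=2$, the archimedean sign, and the global unit relations into a single constant. Since $\Q_{2}^{\times}/(\Q_{2}^{\times})^{2}$ has order $8$, the crude bound $\vol$ on the index of $\theta(\Or(L\otimes\Z_{2}))$ in the admissible subgroup together with the global $\pm 1$ contribution is at most $4$. Combining with the odd-prime bound gives
\begin{equation*}
g_{sp}^{+}(L) \;\leq\; 4 \cdot 2^{|P|},
\end{equation*}
as claimed.

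The main obstacle is the primes in $P$: one must rigorously pin down $\theta(\Or(L\otimes\Z_{p}))$ for a diagonal $\Z_{p}$-lattice and confirm that the subgroup of $\Z_{p}^{\times}/(\Z_{p}^{\times})^{2}$ generated by products of two diagonal entries is of index at most $2$ (not $4$). A secondary, but routine, difficulty is bookkeeping at $p=2$ — separating odd from even Jordan blocks — which I would handle by settling for the coarse bound $4$ rather than the sharpest possible constant, since any tighter analysis is absorbed into the prefactor and does not affect the final estimate.
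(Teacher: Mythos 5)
Your proof is correct and follows essentially the same route as the paper: both rest on Cassels' spinor-genus formula, show the local factor is trivial whenever some Jordan component has rank $\geq 2$ (i.e.\ for all $p\notin P\cup\{2\}$), and fall back on the trivial bounds $[\Zpu:(\Zpu)^2]=2$ for $p\in P$ and $[\mathbb{Z}_2^{\times}:(\mathbb{Z}_2^{\times})^2]=4$ at $p=2$. The ``main obstacle'' you flag at primes $p\in P$ is not one, since $\Zpu/(\Zpu)^2$ already has order $2$ for odd $p$, and likewise the factor $4$ at $p=2$ needs no archimedean or global-unit bookkeeping beyond the group order $[\mathbb{Z}_2^{\times}:(\mathbb{Z}_2^{\times})^2]$.
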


\begin{proof}
This can be seen from \cite{Ca} Chapter 11.3. 
If $p\not\in P\cup\{2\}$, then $n_{p,j}(L)\geq2$ for some $j$. 
By Lemma 3.3 loc.~cit, the group $\theta({\rm SO}(L\otimes{\Zp}))$ of spinor norms of ${\rm SO}(L\otimes{\Zp})$ contains 
\begin{equation*}
\theta({\rm SO}(L_{p,j}(p^j))) = \theta({\rm SO}(L_{p,j})) = {\Z}_{p}^{\times}\cdot({\Q}_p^{\times})^2 
\end{equation*} 
for such $p$. 
By Theorem 3.1 Note 2, equality (3.35) and Lemma 3.6 (i) loc.~cit., we then have 
\begin{eqnarray*}
g_{sp}^{+}(L) 
& \leq &  \prod_{p|2D(L)} [{\Z}_{p}^{\times} : {\Z}_{p}^{\times} \cap \theta({\rm SO}(L\otimes{\Zp}))] \\ 
& \leq &  \prod_{p\in P\cup\{2\}} [{\Z}_{p}^{\times} : ({\Z}_{p}^{\times})^2 ] \\ 
& = & 4\cdot 2^{|P|}. 
\end{eqnarray*}
\end{proof}

Next we recall the formula of $\alpha_p(L)$ given in \cite{Ki} \S 5.6 (see especially p.98 and Theorem 5.6.3). 
We write $s_p(L)$ for the number of indices $j$ with $L_{p,j}\ne0$, and set  
\begin{equation*}
w_p(L) = \sum_{j} j\cdot n_{p,j}(L)\cdot \Bigl( \frac{n_{p,j}(L)+1}{2} + \sum_{k>j} n_{p,k}(L) \Bigr).  
\end{equation*}
For an even unimodular ${\Zp}$-lattice $N$ of rank $r\geq0$, 
we define $\chi(N)$ by 
$\chi(N)=0$ if $r$ is odd, 
$\chi(N)=1$ if $N\simeq (r/2)U\otimes{\Zp}$, 
and $\chi(N)=-1$ otherwise. 
For a natural number $m$ we put  
\begin{equation*}
P_p(m) = \prod_{k=1}^{m} (1-p^{-2k}) 
\end{equation*}
when $m>0$, and $P_p(0)=1$. 
Then for $p\ne2$, we have 
\begin{equation*}\label{eqn:local density odd}
\alpha_p(L) = 2^{s_p(L)-1} \cdot p^{w_p(L)} \cdot \prod_{j} P_p([n_{p,j}(L)/2]) 
\cdot \prod_{j} (1+\chi(L_{p,j})p^{-n_{p,j}(L)/2})^{-1}, 
\end{equation*}
where $j$ ranges over indices with $L_{p,j}\ne0$. 

The $2$-adic density is more complicated. 
Consider a decomposition $L_{2,j}=L_{2,j}^{+}\oplus L_{2,j}^{-}$ such that 
$L_{2,j}^{+}$ is even and $L_{2,j}^{-}$ is either $0$ or odd of rank $\leq2$. 
Put $n_{2,j}^{+}(L)={\rm rk}(L_{2,j}^{+})$. 
We also set $q(L)=\sum_{j\geq0} q_j(L)$, where 
$q_j(L)=0$ if $L_{2,j}$ is even, 
$q_j(L)=n_{2,j}(L)$ if $L_{2,j}$ is odd and $L_{2,j+1}$ is even, 
and $q_j(L)=n_{2,j}(L)+1$ if both $L_{2,j}$ and $L_{2,j+1}$ are odd. 
Here zero-lattice is counted as an even lattice. 
For an index $j$ with $L_{2,j}\ne0$,  
we define $E_{2,j}(L)$ by  
$E_{2,j}(L) = 1+\chi(L_{2,j}^+)2^{-n_{2,j}^{+}(L)/2}$ if both $L_{2,j-1}$ and $L_{2,j+1}$ are even 
and $L_{2,j}^-\nsimeq\langle\epsilon_1, \epsilon_2\rangle$ with $\epsilon_1\equiv\epsilon_2$ mod $4$, 
and $E_{2,j}(L)=1$ otherwise. 
We also let $s_2'(L)$ be the number of indices $j\geq-1$ such that 
$L_{2,j}=0$ and either $L_{2,j-1}$ or $L_{2,j+1}$ is odd. 
Then we have 
\begin{equation*}\label{eqn:local density 2}
\alpha_2(L) = 2^{n+1+w_2(L)-q(L)+s_2(L)+s_2'(L)} \cdot \prod_{j} P_2(n_{2,j}^+(L)/2) 
\cdot \prod_{j} E_{2,j}(L)^{-1}, 
\end{equation*}
where $j$ ranges over indices with $L_{2,j}\ne0$.


\subsection{Split case}\label{ssec:vol ratio I split}

We now begin the estimate of ${\HM}(L, K)$. 
We first consider the split case. 
For later purpose (\S \ref{ssec:vol ratio I nonsplit}) we will not assume until Proposition \ref{e(L) bdd split} that the lattice $L$ is primitive. 
So our initial setting is: 
$L$ is a lattice of signature $(2, n)$ with $n\geq2$, 
and $l\in L$ is a primitive vector of norm $(l, l)=-D$ such that we have the orthogonal splitting 
\begin{equation*}
L = {\Z}l \oplus K \simeq \langle -D \rangle \oplus K, \qquad K=l^{\perp}\cap L. 
\end{equation*}
We denote the prime decompositions of $D$, $D(L)$, $|A_L|$ respectively  by 
\begin{equation*}
D = \prod_{p} p^{\nu(p)}, \quad  
D(L) = \prod_{p} p^{\mu(p)}, \quad  
|A_L| = \prod_{p} |A_L|_p. 
\end{equation*}
It is clear that $\nu(p)\leq \mu(p)$. 
We use the Jordan decomposition of $L\otimes{\Zp}$ that is induced from a Jordan decomposition of $K\otimes{\Zp}$. 
Then 
\begin{equation*}
K_{p,j}\simeq L_{p,j} \qquad (j\ne \nu(p)), 
\end{equation*}
\begin{equation*}
n_{p,\nu(p)}(K)=n_{p,\nu(p)}(L)-1. 
\end{equation*}

Substituting $L$ and $K$ into the formula \eqref{eqn:GHS formula}, we obtain  
\begin{equation*}
{\HM}(L, K) \; = \; \frac{g_{sp}^{+}(L)}{g_{sp}^{+}(K)} 
\cdot \frac{\pi^{n/2+1}}{\Gamma(n/2+1)} \cdot \left( \frac{1}{D} \right)^{n/2+1} \cdot |A_L|^{-1/2} \cdot 
\prod_{p}\frac{\alpha_p(L)}{\alpha_p(K)}. 
\end{equation*}
If we put for each prime $p$ 
\begin{equation*}
a_p(L, K) := p^{-\nu(p)(n/2+1)} \cdot |A_L|_p^{-1/2} \cdot \frac{\alpha_p(L)}{\alpha_p(K)}, 
\end{equation*}
this can be rewritten as 
\begin{equation}\label{eqn:vol and ap}
{\HM}(L, K) \; = \; \frac{g_{sp}^{+}(L)}{g_{sp}^{+}(K)} 
\cdot \frac{\pi^{n/2+1}}{\Gamma(n/2+1)} \cdot \prod_{p}a_p(L, K). 
\end{equation}
Below we shall estimate $a_p(L, K)$ for each $p$. 
The case $p\nmid 2D(L)$ is easy (Lemma \ref{lem:a_p and epj split} (1)). 
When $p|D(L)$, we rearrange $a_p(L, K)$ as follows. 


\begin{lemma}\label{lem:a_p and m_{p,j}}
Let $p$ be a prime. 
For an index $j$ with $L_{p,j}\ne0$ we put 
\begin{equation*}
m_{p,j}(L) := \sum_{k\geq0} |\, k-j\, | \cdot n_{p,k}(L) - \mu(p). 
\end{equation*}
Then 
\begin{equation}\label{eqn:a_p and m_{p,j}}
a_p(L, K) = 
p^{-m_{p,\nu(p)}(L)/2} \cdot \frac{\alpha_p(L)\cdot p^{-w_p(L)}}{\alpha_p(K)\cdot p^{-w_p(K)}} \cdot p^{-\mu(p)/2}. 
\end{equation}
\end{lemma}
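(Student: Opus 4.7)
The plan is to verify \eqref{eqn:a_p and m_{p,j}} as an identity of $p$-powers. Since the factor $\alpha_p(L)/\alpha_p(K)$ appears on both sides, it can be cancelled, and the lemma reduces to showing
\begin{equation*}
\nu(p)(n+2) + \log_p|A_L|_p \;=\; m_{p,\nu(p)}(L) + 2\bigl(w_p(L)-w_p(K)\bigr) + \mu(p).
\end{equation*}

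First I would record the elementary facts coming from the Jordan decomposition. Writing $\nu=\nu(p)$ and $n_j=n_{p,j}(L)$, one has $\sum_j n_j=n+2$ and $\log_p|A_L|_p=\sum_j j\,n_j$. The hypothesis $L=\Z l\oplus K$ with $(l,l)=-D$ yields $n_{p,j}(K)=n_j-\delta_{j\nu}$, and by definition $m_{p,\nu}(L)+\mu(p)=\sum_j|j-\nu|\,n_j$. Thus after these substitutions, the entire right-hand side will be expressible in the $n_j$'s alone, once $w_p(L)-w_p(K)$ has been computed.

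Next I would compute $w_p(L)-w_p(K)$ index-by-index. For $j<\nu$ only the tail $\sum_{k>j}n_{p,k}(\cdot)$ decreases by $1$, giving a contribution of $j\,n_j$. For $j>\nu$ nothing changes and the contribution vanishes. At $j=\nu$ a short expansion using $n_\nu(n_\nu+1)-(n_\nu-1)n_\nu=2n_\nu$ produces $\nu\bigl(n_\nu+\sum_{k>\nu}n_k\bigr)$. Summing collapses to the clean expression
\begin{equation*}
w_p(L)-w_p(K) \;=\; \sum_j \min(j,\nu)\,n_j.
\end{equation*}

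The desired identity then reduces to the elementary observation $|j-\nu|+2\min(j,\nu)=j+\nu$ for every $j\geq0$, proved by distinguishing $j\leq\nu$ and $j\geq\nu$. Multiplying by $n_j$ and summing over $j$ gives $\sum_j(j+\nu)n_j=\log_p|A_L|_p+\nu(n+2)$, which matches the left-hand side. The only genuinely delicate step is the algebra at $j=\nu$, where the quadratic term $n_{p,\nu}(n_{p,\nu}+1)/2$ in the definition of $w_p$ is actually changed rather than merely perturbed through the tail; but because this change is linear in $n_\nu$, the entire computation collapses cleanly and no further work is required.
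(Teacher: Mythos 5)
Your proof is correct and follows essentially the same route as the paper: cancel $\alpha_p(L)/\alpha_p(K)$, reduce to the exponent identity $\log_p|A_L|_p+\nu(p)(n+2)=m_{p,\nu(p)}(L)+2(w_p(L)-w_p(K))+\mu(p)$, compute $w_p(L)-w_p(K)=\sum_j\min(j,\nu)\,n_{p,j}(L)$ from the Jordan data, and finish with $|j-\nu|+2\min(j,\nu)=j+\nu$. The index-by-index bookkeeping, including the quadratic term at $j=\nu$, matches the paper's calculation.
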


\begin{proof}
It suffices to check that 
\begin{equation*}
{\log}_p|A_L|_p + \nu(p)(n+2) = 2w_p(L) - 2w_p(K) + m_{p,\nu(p)}(L) + \mu(p). 
\end{equation*}
We have 
\begin{equation*}\label{eqn:a_p and m_pj 1}
{\log}_p|A_L|_p + \nu(p)(n+2) = \sum_{k\geq0} k\cdot n_{p,k}(L) + \sum_{k\geq0} \nu(p)\cdot n_{p,k}(L). 
\end{equation*}
Using the relation of $n_{p,k}(L)$ and $n_{p,k}(K)$, we can calculate 
\begin{equation*}\label{eqn:a_p and m_pj 2}
w_p(L) - w_p(K) = 
\sum_{k<\nu(p)} k\cdot n_{p,k}(L) + \nu(p) \cdot \sum_{k\geq \nu(p)}n_{p,k}(L). 
\end{equation*}
Therefore 
\begin{eqnarray*}
& & {\log}_p|A_L|_p + \nu(p)(n+2) - 2w_p(L) + 2w_p(K) \\ 
&=&  \sum_{k<\nu(p)} (\nu(p)-k)n_{p,k}(L) + \sum_{k\geq\nu(p)} (k-\nu(p))n_{p,k}(L). 
\end{eqnarray*}
\end{proof}

\begin{figure}[h]
\begin{center}
\setlength\unitlength{4.5mm}
\begin{picture}(20,8)(0,0)
\put(0,0){\line(1,0){20}}
\put(5,3){\line(1,0){3}}
\put(8,5){\line(1,0){4}}
\put(12,6){\line(1,0){3}}
\put(15,8){\line(1,0){5}}

\put(5,0){\line(0,1){3}}
\put(8,3){\line(0,1){2}}
\put(12,5){\line(0,1){1}}
\put(15,6){\line(0,1){2}}
\put(20,0){\line(0,1){8}}

\put(0,0){\dashbox{0.2}(1,5){}}
\put(19,5){\dashbox{0.2}(1,3){}}
\multiput(0,5)(0.4, 0){48}{\line(1,0){0.2}}

\put(-0.6,5){$j$}
\put(-0.6,0){$0$}
\put(20.5,8){$\mu(p)$}
\put(2,-0.9){$n_{p,0}$}
\put(17,8.7){$n_{p,\mu(p)}$}
\put(10,5.7){$n_{p,j}$}

\put(1,0){\line(1,1){5}}
\put(1.5,0){\line(1,1){5}}
\put(2,0){\line(1,1){5}}
\put(2.5,0){\line(1,1){2.5}}
\put(3,0){\line(1,1){2}}
\put(3.5,0){\line(1,1){1.5}}
\put(4,0){\line(1,1){1}}
\put(4.5,0){\line(1,1){0.5}}

\put(1,0.5){\line(1,1){4.5}}
\put(1,1){\line(1,1){4}}
\put(1,1.5){\line(1,1){3.5}}
\put(1,2){\line(1,1){3}}
\put(1,2.5){\line(1,1){2.5}}
\put(1,3){\line(1,1){2}}
\put(1,3.5){\line(1,1){1.5}}
\put(1,4){\line(1,1){1}}
\put(1,4.5){\line(1,1){0.5}}

\put(5,3){\line(1,1){2}}
\put(5.5,3){\line(1,1){2}}
\put(6,3){\line(1,1){2}}
\put(6.5,3){\line(1,1){1.5}}
\put(7,3){\line(1,1){1}}
\put(7.5,3){\line(1,1){0.5}}

\put(12,5.5){\line(1,1){0.5}}
\put(12,5){\line(1,1){1}}
\put(12.5,5){\line(1,1){1}}
\put(13,5){\line(1,1){1}}
\put(13.5,5){\line(1,1){1}}
\put(14,5){\line(1,1){3}}
\put(14.5,5){\line(1,1){3}}
\put(15,5){\line(1,1){3}}
\put(15.5,5){\line(1,1){3}}
\put(16,5){\line(1,1){3}}
\put(16.5,5){\line(1,1){2.5}}
\put(17,5){\line(1,1){2}}
\put(17.5,5){\line(1,1){1.5}}
\put(18,5){\line(1,1){1}}
\put(18.5,5){\line(1,1){0.5}}

\put(15,6.5){\line(1,1){1.5}}
\put(15,7){\line(1,1){1}}
\put(15,7.5){\line(1,1){0.5}}

\qbezier(0, 0)(2.5, -0.6)(5, 0)
\qbezier(8, 5)(10, 5.6)(12, 5)
\qbezier(15, 8)(17.5, 8.6)(20, 8)
\end{picture}
\end{center}
\caption{$m_{p,j}(L)$ (when $L\otimes{\Zp}$ is primitive)}
\label{figure:m_{p,j}}
\end{figure}
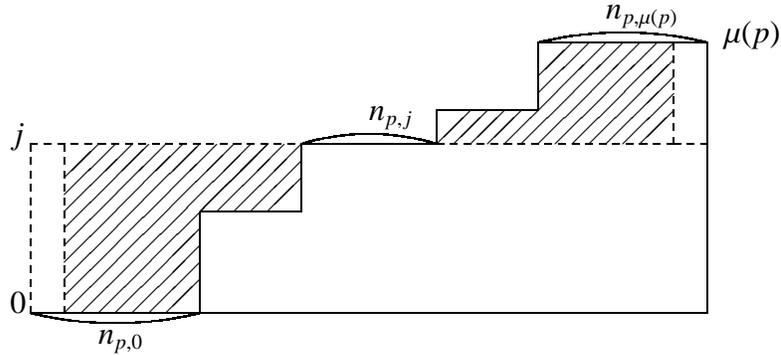 

The term $p^{-\mu(p)/2}$ that we separated in \eqref{eqn:a_p and m_{p,j}} measures the size of $L\otimes{\Zp}$. 
This will be reserved through the rest of this section. 
The number $m_{p,\nu(p)}(L)$ will be central in our estimate. 
When $L\otimes{\Zp}$ is primitive, i.e., $n_{p,0}(L)>0$, one can understand $m_{p,j}(L)$ as 
the area of the slanted region in Figure \ref{figure:m_{p,j}}.  
Let us first bound the middle term of \eqref{eqn:a_p and m_{p,j}} 
\begin{equation*}
\frac{\alpha_p(L)\cdot p^{-w_p(L)}}{\alpha_p(K)\cdot p^{-w_p(K)}} 
\end{equation*}
in the next Lemma \ref{lem:a_p and epj split}. 
The result is to be reflected in the following definition of ${\epjL}$.  

\begin{definition}\label{def: epj(L)}
Let $L$ be a lattice of signature $(2, n)$. 
Let $p$ be a prime divisor of $2D(L)$ and $j$ be an index with $L_{p,j}\ne0$. 
We set 
\begin{equation*}\label{eqn:def epj} 
{\epjL} = 
\begin{cases}
\: p^{-m_{p,j}(L)/2}(1+p^{-[n_{p,j}(L)/2]}),    & \; p\notin P\cup \{2\}, \\
\: 4 \cdot p^{-m_{p,j}(L)/2},                          & \; p\in P, \\  
\: 2^{-m_{2,j}(L)/2},                                      & \; p=2. 
\end{cases}  
\end{equation*}
\end{definition}

Note that when $2\nmid D(L)$, namely $L\otimes{\Z}_2$ is unimodular, we have $m_{2,0}(L)=0$ and hence $\varepsilon_{2,0}(L)=1$. 
Note also that $\varepsilon_{p,j}(L)$ does not depend on the choice of Jordan decomposition. 

\begin{lemma}\label{lem:a_p and epj split}
The following inequalities hold. 

(1) When $p\nmid 2D(L)$, we have 
\begin{equation*}\label{eqn:ap split 1}
a_p(L, K) \leq 1+p^{-[n/2]-1}. 
\end{equation*}

(2) When $p|D(L)$ with $p\not\in P\cup \{ 2\}$, we have  
\begin{equation*}\label{eqn:ap split 2}
a_p(L, K) \leq {\epnuL} \cdot p^{-\mu(p)/2}. 
\end{equation*}

(3) For $p\in P$ we have 
\begin{equation*}\label{eqn:ap split 4}
g_{sp}^{+}(L) \cdot \prod_{p\in P} a_p(L, K) \leq 
4 \cdot \prod_{p\in P} {\epnuL} \cdot p^{-\mu(p)/2}. 
\end{equation*}

(4) For $p=2$ we have 
\begin{equation*}\label{eqn:ap split 3}
a_2(L, K)  \leq 2^5 \cdot \varepsilon_{2,\nu(2)}(L) \cdot 2^{-\mu(2)/2}. 
\end{equation*}
\end{lemma}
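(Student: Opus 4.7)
My overall strategy is to apply Lemma \ref{lem:a_p and m_{p,j}} to each of the four cases, reducing the estimation of $a_p(L, K)$ to bounding the single ratio
\begin{equation*}
R_p := \frac{\alpha_p(L)\cdot p^{-w_p(L)}}{\alpha_p(K)\cdot p^{-w_p(K)}},
\end{equation*}
and then plug in the explicit formulas for $\alpha_p(L)$, $\alpha_p(K)$ recalled in \S\ref{ssec:vol formula}. The crucial simplification is that the Jordan decompositions of $L\otimes\Zp$ and $K\otimes\Zp$ agree outside the single index $j=\nu(p)$, where $n_{p,\nu(p)}(K)=n_{p,\nu(p)}(L)-1$. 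Hence in the ratio $R_p$, almost every factor (over $j$) cancels, and only a bounded number of local terms near $j=\nu(p)$ survive. The terms $p^{-m_{p,\nu(p)}(L)/2}$ and $p^{-\mu(p)/2}$ already accounted for by \eqref{eqn:a_p and m_{p,j}} will reproduce the corresponding factors in the definition of $\varepsilon_{p,j}(L)$.

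For \textbf{(1)}, $L\otimes\Zp$ and $K\otimes\Zp$ are both unimodular, so only the $P_p$ factor at $j=0$ and the $\chi$-correction factor contribute. A straightforward split by the parity of $n$ and the inequality $(1-x^2)/(1\pm x)\leq 1+x$ yield $R_p\leq 1+p^{-[n/2]-1}$, which is what (1) asserts since $\nu(p)=\mu(p)=0$ here. For \textbf{(2)}, I split into $n_{p,\nu(p)}(L)$ even or odd. In each sub-case the $2^{s_p-1}$ ratio is $1$ (because $n_{p,\nu(p)}(L)\geq 2$ keeps $K_{p,\nu(p)}\neq 0$ in the relevant sub-case), the $P_p$ ratio is either $1$ or $1-p^{-2k}$, and the $\chi$-factor ratio is either $1\pm p^{-k}$ or $1/(1\pm p^{-k})$. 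Combining them gives $R_p\leq 1+p^{-[n_{p,\nu(p)}(L)/2]}$ uniformly, which together with Lemma \ref{lem:a_p and m_{p,j}} yields (2).

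For \textbf{(3)}, the hypothesis $p\in P$ forces $n_{p,\nu(p)}(L)\leq 1$, hence $=1$ (since the orthogonal summand $\langle -D\rangle$ contributes to level $\nu(p)$), and $n_{p,\nu(p)}(K)=0$. A direct computation then gives $R_p=2$ exactly. Multiplying these across all $p\in P$ and combining with the estimate $g_{sp}^+(L)\leq 4\cdot 2^{|P|}$ of Lemma \ref{lem:estimate class number} produces exactly the $4\cdot\prod_{p\in P}\varepsilon_{p,\nu(p)}(L) p^{-\mu(p)/2}$ claimed in (3).

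The main obstacle is \textbf{(4)}, the prime $p=2$, whose local density formula involves the extra parameters $s_2'$, $q$, the correction $E_{2,j}$, and the even part $n_{2,j}^+$. Here I would first observe that $s_2(L)-s_2(K)\in\{0,1\}$, that $q(L)-q(K)$ and $s_2'(L)-s_2'(K)$ are each controlled by a bounded integer depending only on parity/oddness of the Jordan components adjacent to $j=\nu(2)$, and that both the $P_2$-ratio and the factors $E_{2,j}(K)/E_{2,j}(L)$ for $j\notin\{\nu(2)-1,\nu(2),\nu(2)+1\}$ cancel. A careful case-by-case enumeration (even/odd status of $L_{2,\nu(2)-1}$, $L_{2,\nu(2)}$, $L_{2,\nu(2)+1}$) then bounds the surviving contributions by a product not exceeding $2^5$, which gives $R_2\leq 2^5$ and hence (4). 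The bookkeeping is tedious but the structure is exactly parallel to the odd case, and the loose constant $2^5$ is deliberately generous to absorb all of the $2$-adic subtleties in a single clean inequality usable in \S \ref{sec:vol estimate II}.
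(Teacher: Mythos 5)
Your overall strategy---factoring $a_p(L,K)$ through Lemma \ref{lem:a_p and m_{p,j}} and bounding the ratio $R_p=\alpha_p(L)p^{-w_p(L)}/\bigl(\alpha_p(K)p^{-w_p(K)}\bigr)$ from the explicit local density formulas, using that the Jordan constituents of $L\otimes\Zp$ and $K\otimes\Zp$ agree away from $j=\nu(p)$---is exactly the paper's, and your treatments of (1) and (3) are carried out the same way. But two steps have genuine gaps. In (2) you justify $2^{s_p(L)-1}/2^{s_p(K)-1}=1$ by claiming $n_{p,\nu(p)}(L)\geq 2$ whenever $p\notin P\cup\{2\}$. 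That is not what $p\notin P$ means: $P$ consists of the odd $p\mid D(L)$ with $n_{p,j}(L)\leq 1$ for \emph{every} $j$, so $p\notin P$ only guarantees $n_{p,j}(L)\geq 2$ for \emph{some} $j$, which need not be $j=\nu(p)$. When $n_{p,\nu(p)}(L)=1$ the constituent $K_{p,\nu(p)}$ vanishes, $s_p(L)=s_p(K)+1$, and $R_p=2$ exactly; the asserted bound survives only because $1+p^{-[n_{p,\nu(p)}(L)/2]}=1+p^{0}=2$ in that sub-case. This must be treated separately (it is the same computation that part (3) then reuses for $p\in P$).

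In (4), the sharpest termwise bounds one can extract ($s_2(L)-s_2(K)\leq 1$, $q(K)-q(L)\leq -1$, $s_2'(L)-s_2'(K)\leq 2$, $P_2$-ratio $\leq 1$, and $\prod_j E_{2,j}(K)/E_{2,j}(L)\leq 2^4$) multiply out to $R_2\leq 2^{1+(q(K)-q(L))+(s_2(L)-s_2(K))+(s_2'(L)-s_2'(K))}\cdot 2^4\leq 2^{1-1+1+2}\cdot 2^4=2^7$, not $2^5$. So $2^5$ is not a ``deliberately generous'' constant absorbed by independent estimates; it requires the joint inequality $2^{s_2'(L)-s_2'(K)}\cdot\prod_j E_{2,j}(K)/E_{2,j}(L)\leq 2^4$, obtained by checking that the configurations in which $s_2'$ actually increases prevent the $E_{2,j}$-ratios from simultaneously attaining their worst values. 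Without this refinement you only prove the lemma with $2^7$, which weakens $f(n)$ and the numerical bounds derived from it later in the paper.
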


\begin{proof}
(1) 
Let $p\nmid D(L)$ with $p>2$. 
In this case $a_p(L, K)$ reduces to $\alpha_p(L)/\alpha_p(K)$. 
Since both $L\otimes{\Zp}$ and $K\otimes{\Zp}$ are unimodular, 
we have $s_p(L)=s_p(K)=1$ and $w_p(L)=w_p(K)=0$. 
Then 
\begin{eqnarray*}
\frac{\alpha_p(L)}{\alpha_p(K)} &=& 
\frac{P_p([(n+2)/2])}{P_p([(n+1)/2])} \cdot \frac{1+\chi(K_{p,0})p^{-(n+1)/2}}{1+\chi(L_{p,0})p^{-(n+2)/2}} \\ 
& = & \begin{cases}
 1-\chi(L_{p,0})p^{-(n+2)/2},  & n: even, \\
 1+\chi(K_{p,0})p^{-(n+1)/2},  & n: odd,  
\end{cases} \\ 
& \leq & 1+p^{-[n/2]-1}. 
\end{eqnarray*}

(2) 
Next we consider the case $p|D(L)$ with $p>2$. 
When $n_{p,\nu(p)}(L)>1$, we have $s_p(L)=s_p(K)$.  
Then 
\begin{eqnarray*}
\frac{\alpha_p(L)\cdot p^{-w_p(L)}}{\alpha_p(K)\cdot p^{-w_p(K)}} &=& 
\frac{P_p([n_{p,\nu(p)}(L)/2])}{P_p([n_{p,\nu(p)}(K)/2])} \cdot 
\frac{1+\chi(K_{p,\nu(p)})p^{-n_{p,\nu(p)}(K)/2}}{1+\chi(L_{p,\nu(p)})p^{-n_{p,\nu(p)}(L)/2}} \\ 
& \leq & 1+p^{-[n_{p,\nu(p)}(L)/2]} 
\end{eqnarray*}
by the same calculation as in case (1). 
On the other hand, if $n_{p,\nu(p)}(L)=1$, we have $s_p(L)=s_p(K)+1$ so that   
\begin{equation}\label{eqn:alpha_p p^w_p n_pj=1}
\frac{\alpha_p(L)\cdot p^{-w_p(L)}}{\alpha_p(K)\cdot p^{-w_p(K)}} = 2. 
\end{equation}
By \eqref{eqn:a_p and m_{p,j}}, this gives the desired inequality in case $p\notin P$. 

(3) When $p\in P$, the equality \eqref{eqn:alpha_p p^w_p n_pj=1} is still valid. 
This, combined with \eqref{eqn:a_p and m_{p,j}} and Lemma \ref{lem:estimate class number}, gives the desired inequality. 
 
(4) 
Finally let $p=2$. 
Note that $L_{2,\nu(2)}$ is odd. 
It is easy to check that 
\begin{equation*}
s_2(L) - s_2(K) \leq 1,  
\end{equation*}
\begin{equation*}
s_2'(L) - s_2'(K) \leq 2,   
\end{equation*}
\begin{equation*}
q(K) - q(L) = q_{\nu(2)}(K) - q_{\nu(2)}(L) + q_{\nu(2)-1}(K) - q_{\nu(2)-1}(L) \leq -1+0 = -1,  
\end{equation*}
\begin{equation*}
\prod_{j} \frac{P_2(n_{2,j}^+(L)/2)}{P_2(n_{2,j}^+(K)/2)} 
= \frac{P_2(n_{2,\nu(2)}^+(L)/2)}{P_2(n_{2,\nu(2)}^+(K)/2)} 
\leq 1, 
\end{equation*}
\begin{equation*}
\prod_{j}\frac{E_{2,j}(K)}{E_{2,j}(L)} 
= \prod_{j=\nu(2)-1}^{\nu(2)+1}\frac{E_{2,j}(K)}{E_{2,j}(L)} 
\leq \frac{1+1}{1} \cdot \frac{1+1}{1-2^{-1}} \cdot \frac{1+1}{1} = 2^4. 
\end{equation*}
Actually, examining the cases when $s_2'(L)>s_2'(K)$ holds, we can see 
\begin{equation*}
2^{s_2'(L) - s_2'(K)}\cdot \prod_{j}E_{2,j}(K)/E_{2,j}(L) \leq 2^4. 
\end{equation*}
This gives 
\begin{equation}\label{eqn:2^7bound}
\frac{\alpha_2(L)\cdot 2^{-w_2(L)}}{\alpha_2(K)\cdot 2^{-w_2(K)}} \leq 2^5.
\end{equation}  
\end{proof}

By this lemma we obtain 
\begin{equation*}
g_{sp}^{+}(L) \cdot \prod_{p}a_p(L, K) \; < \;  
2^7 \cdot \zeta([n/2]+1) \cdot \prod_{p|D(L)} \varepsilon_{p,\nu(p)}(L) \cdot D(L)^{-1/2} 
\end{equation*}
regardless of whether $D(L)$ is even or odd. 
Substituting this into \eqref{eqn:vol and ap} gives the following intermediate estimate of ${\HM}(L, K)$.  

\begin{proposition}\label{prop:vol split single}
Let $L$ be a lattice of signature $(2, n)$ with $n\geq2$, and 
$K=l^{\perp}\cap L$ be the orthogonal complement of a reflective vector 
$l\in L$ of split type of norm $(l, l)=-D=-\prod_{p}p^{\nu(p)}$. 
Then we have 
\begin{equation*}
{\HM}(L, K) \; < \; 
\frac{1}{g_{sp}^{+}(K)} \cdot 
\frac{2^7\cdot\pi^{n/2+1}\cdot\zeta([n/2]+1)}{\Gamma(n/2+1)} \cdot D(L)^{-1/2} \cdot  
\prod_{p|D(L)} \varepsilon_{p,\nu(p)}(L). 
\end{equation*}
\end{proposition}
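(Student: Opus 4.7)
The plan is to assemble the proof mechanically from the material already developed: the product formula \eqref{eqn:vol and ap} and the four cases of Lemma \ref{lem:a_p and epj split}, together with the spinor class number bound of Lemma \ref{lem:estimate class number} which is already baked into part (3) of Lemma \ref{lem:a_p and epj split}. So the task reduces to bounding the factor $g_{sp}^{+}(L)\cdot\prod_p a_p(L,K)$ in \eqref{eqn:vol and ap}, then dividing by $g_{sp}^{+}(K)$ at the end.

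First I would partition the set of primes into four classes and apply the matching case of Lemma \ref{lem:a_p and epj split} to each: (i) $p\nmid 2D(L)$, (ii) $p\in P$, (iii) $p\mid D(L)$ with $p$ odd and $p\notin P$, (iv) $p=2$. The class (i) contribution, by part (1), is majorised by
\[
\prod_{p\nmid 2D(L)}(1+p^{-[n/2]-1})\;\leq\;\prod_{p}(1+p^{-[n/2]-1})\;=\;\frac{\zeta([n/2]+1)}{\zeta(2[n/2]+2)}\;<\;\zeta([n/2]+1),
\]
which is finite since $n\geq 2$ forces $[n/2]+1\geq 2$. The class (ii) contribution, by part (3), absorbs the spinor class number $g_{sp}^{+}(L)$ entirely and produces the constant $4$ together with $\prod_{p\in P}\varepsilon_{p,\nu(p)}(L)\cdot p^{-\mu(p)/2}$. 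Class (iii) contributes $\prod \varepsilon_{p,\nu(p)}(L)\cdot p^{-\mu(p)/2}$ by part (2), and class (iv) contributes $2^5\cdot\varepsilon_{2,\nu(2)}(L)\cdot 2^{-\mu(2)/2}$ by part (4).

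Multiplying these four bounds I would combine the numerical constants as $4\cdot 2^5=2^7$, collect the prime power factors as $\prod_{p\mid 2D(L)} p^{-\mu(p)/2}=D(L)^{-1/2}$, and reassemble the $\varepsilon_{p,\nu(p)}(L)$ into the single product $\prod_{p\mid D(L)}\varepsilon_{p,\nu(p)}(L)$. Substituting the result into \eqref{eqn:vol and ap} and dividing by $g_{sp}^{+}(K)$ yields exactly the claimed inequality, the strictness of which comes from the strict Euler product comparison above. The only minor point of care is the edge case $2\nmid D(L)$: then $\mu(2)=0$ and the definition gives $\varepsilon_{2,0}(L)=1$, so applying part (4) uniformly still produces a valid (though non-sharp) bound, and no reshuffling of the constant $2^7$ is needed. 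Since every input is already proved, no genuine obstacle is expected; the work is entirely in the bookkeeping of the four classes.
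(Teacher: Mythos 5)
Your proposal is correct and follows exactly the route the paper takes: the paper's proof of this proposition is precisely the one-line assembly of Lemma \ref{lem:a_p and epj split} (with the class number absorbed via part (3)), the Euler product bound $\prod_p(1+p^{-[n/2]-1})<\zeta([n/2]+1)$, and substitution into \eqref{eqn:vol and ap}; your handling of the edge case $2\nmid D(L)$ via $\varepsilon_{2,0}(L)=1$ matches the paper's remark that the bound holds ``regardless of whether $D(L)$ is even or odd.''
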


The point here is that 
the right-hand side reserves $D(L)$ which measures the size of $L$,  
and that except $g_{sp}^{+}(K)^{-1}$ it depends only on $L$ and $D$ but not on $K$.

The estimate of ${\HM}(L, K)$ is thus shifted to that of $\prod_{p} \varepsilon_{p,\nu(p)}(L)$. 
Recall that what we finally need to estimate is not single ${\HM}(L, K)$ but rather their sum over all reflective vectors up to ${\Or}(L)$.  
Accordingly, we shall not estimate single $\prod_{p} \varepsilon_{p,\nu(p)}(L)$ but rather their following combination 
which will arise in the summation process (\S \ref{ssec:vol ratio II split}).  

\begin{definition}\label{def: e(L)}
Let $L$ be a lattice of signature $(2, n)$. 
For $p|2D(L)$ we put 
\begin{equation*}
{\epL} = \sum_{j, L_{p,j}\ne0} {\epjL}. 
\end{equation*}
Then we set 
\begin{equation*}
{\eL} 
= \prod_{p|D(L)}{\epL}
= \sum_{J}\left( \prod_{p|D(L)}\varepsilon_{p,j(p)}(L) \right),  
\end{equation*}
where 
$J = (j(p))_{p|D(L)}$ runs through multi-indices such that $L_{p,j(p)}\ne0$ for every $p$. 
Note that when $2\nmid D(L)$, we have $\varepsilon_2(L)=1$. 
\end{definition}

\textit{From now on we assume that $L$ is primitive.} 
The main step in the proof of Theorem \ref{branch obstruction} is the following.

\begin{proposition}\label{e(L) bdd split}
For primitive lattices $L$ the numbers ${\eL}$ are bounded in $n\geq4$: 
there exists a constant $\varepsilon < \infty$ independent of $L$ and $n$ such that ${\eL}\leq\varepsilon$ 
for every primitive lattice $L$ of signature $(2, n)$ with $n\geq4$.  
\end{proposition}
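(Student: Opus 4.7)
I estimate $\eL = \prod_{p \mid D(L)} \epL$ factor by factor and exploit convergence of an Euler-product-like sum. The aim is to prove $\epL \leq 1 + C p^{-2}$ for all but finitely many primes $p$; together with convergence of $\prod_{p}(1 + C p^{-2})$ and a uniform constant bound for the finitely many exceptional primes, this yields an absolute bound on $\eL$. The hypothesis $n \geq 4$ enters precisely to make the exponent $2$ admissible in the key estimates.

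I would first exploit primitivity of $L$ via $n_{p,0}(L) \geq 1$ at every prime $p$; together with $n_{p,\mu(p)}(L) \geq 1$ (by definition of $\mu(p)$), this forces $m_{p,j}(L) \geq 0$ for every $j$ in the Jordan support of $L \otimes \Zp$. The function $j \mapsto f_p(j) := \sum_{k}|k-j| n_{p,k}(L)$ is piecewise linear and convex: its discrete second difference at $j$ equals $2 n_{p,j}(L)$. Since $m_{p,j}(L) = f_p(j) - \mu(p)$, this convexity is the main analytic tool.

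I then split into two regimes. In the \emph{minimal regime} $m_{p,0}(L) = 0$, the equality forces $L \otimes \Zp$ to have Jordan structure $n_{p,0}(L) = n+1$ and $n_{p,\mu(p)}(L) = 1$ with no intermediate pieces. Direct calculation then gives
\begin{equation*}
\varepsilon_{p,0}(L) = 1 + p^{-[(n+1)/2]},
\qquad
\epnuL = 2 p^{-\mu(p) n / 2},
\end{equation*}
both $\leq 1 + O(p^{-2})$ for $n \geq 4$, so that $\epL \leq 1 + C_1 p^{-2}$. In the \emph{non-minimal regime} $m_{p,0}(L) \geq 1$, I bound $\varepsilon_{p,0}(L) \leq 2 p^{-1/2}$ and use the convexity-driven growth
\begin{equation*}
f_p(j+1) - f_p(j) \; = \; 2 \sum_{k \leq j} n_{p,k}(L) - (n+2)
\end{equation*}
to show that $m_{p,j_i}(L)$ grows at least linearly as $j_i$ moves away from its minimizer, so the sum $\sum_i p^{-m_{p,j_i}(L)/2}$ is geometric and bounded uniformly in the number and spacing of Jordan pieces.

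Finally, the primes $p \in P$ and $p = 2$ are handled via the alternate formulas of Definition \ref{def: epj(L)}. For $p \in P$ the constraint $n_{p,k}(L) \leq 1$ bounds the number of Jordan pieces by $n+2$, and Lemma \ref{lem:estimate class number} absorbs the spinor class number contribution. For $p = 2$ the universal factor $2^5$ of Lemma \ref{lem:a_p and epj split}(4) suffices. Combining all contributions, $\eL$ is bounded by an absolute constant for every primitive $L$ of signature $(2, n)$ with $n \geq 4$. The main obstacle is the non-minimal regime: obtaining the geometric bound on $\sum_i p^{-m_{p,j_i}(L)/2}$ uniformly when $L \otimes \Zp$ has many or widely spaced Jordan pieces. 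One must translate the convexity of $f_p$ into a quantitative per-step gap in $m_{p,j_i}(L)$, avoiding naive bounds of the form $O(s_p(L))$ that would fail to be uniform in $L$.
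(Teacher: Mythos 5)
Your overall strategy --- exploit $m_{p,j}(L)\ge\max(0,\,n-n_{p,j}(L))$ coming from primitivity, isolate the rigid configurations where some $m_{p,j}$ vanishes, and control everything else through a convergent Euler product --- is the same skeleton as the paper's proof, and your computations in the minimal regime are correct. But there are two genuine gaps. First, the dichotomy is taken on the wrong quantity: you split according to whether $m_{p,0}(L)=0$, whereas the dangerous configurations are those with $\min_j m_{p,j}(L)=0$, and the minimizing index need not be $j=0$. For instance $n_{p,0}=1$, $n_{p,\mu(p)}=n+1$ gives $m_{p,0}=n\mu(p)\ge 4$, so it lands in your non-minimal regime; there $\varepsilon_{p,0}\le 2p^{-1/2}$ as you claim, but the dominant term is $\varepsilon_{p,\mu(p)}=1+p^{-[(n+1)/2]}$, coming from $m_{p,\mu(p)}=0$. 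Second, and fatally for the plan as stated, in the non-minimal regime you only assert that $\sum_i p^{-m_{p,j_i}(L)/2}$ is ``bounded uniformly''. A per-prime bound by an absolute constant $C_0\ge 2$ is useless: $D(L)$ can have arbitrarily many prime divisors, so $\prod_{p\mid D(L)}\epL$ is unbounded unless each factor is either $\le 1$ or $\le 1+Cp^{-2}$ outside a set of primes whose contribution is controlled, and your regime analysis does not deliver this for the configurations just described. (A smaller issue: ``$m_{p,j}$ grows at least linearly away from its minimizer'' is false --- the increment $2\sum_{k\le j}n_{p,k}(L)-(n+2)$ can vanish across an arbitrarily long flat stretch --- though this is repairable by measuring growth per nonzero Jordan block passed rather than per unit of $j$.)

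The repair essentially forces you into the paper's case division. Since $\sum_j n_{p,j}(L)=n+2$, at most one index can have $n_{p,j}\ge n$ when $n\ge 3$, hence at most one index can have $m_{p,j}=0$; when such an index exists the remaining blocks have total rank $\le 2$ and a direct check gives $\epL\le 1+5p^{-2}$ (the paper's cases $P_3$, $P_4$). When every $n_{p,j}<n$, one gets $m_{p,j}\ge 1$ everywhere, but the resulting bound has the shape $2(n+2)p^{-(n-2)/4}$, which exceeds $1$ for small $n$ and small $p$; one must then observe that only finitely many pairs $(n,p)$ are exceptional and take a maximum over those $n$ (the paper's $P_5$, $P_6$). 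The primes $p\in P$ are not handled by ``Lemma \ref{lem:estimate class number} absorbing the class number'' --- that lemma enters elsewhere --- but by the lower bound $m_{p,j}\ge(n^2-1)/4$, which beats the factor $4$ in Definition \ref{def: epj(L)} for $n\ge 6$, again with a finite exceptional set for small $n$. So your proposal identifies the right tools and honestly flags the hard step, but as written the non-minimal regime does not close, and closing it requires reorganizing the dichotomy around $\max_j n_{p,j}(L)$ rather than around $m_{p,0}(L)$.
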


This proposition will not be used until Proposition \ref{prop: vol split final}, 
but we want to give the proof here because it would not be easy to remember $\varepsilon(L)$. 
In the proof the following easy estimate of $m_{p,j}(L)$ will be used several times. 
   
\begin{lemma}\label{lem:estimate m_pj(L)}
If $L$ is primitive, we have 
\begin{equation*}\label{eqn:estimate m_pj(L) 1}
m_{p,j}(L) \geq {\max} (0, \; n-n_{p,j}(L)). 
\end{equation*}
\end{lemma}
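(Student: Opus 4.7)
The plan is to rewrite $m_{p,j}(L)$ in a form where both the primitivity of $L$ and the definition of $\mu(p)$ can be exploited directly. Since $\mu(p)=\max\{k\mid L_{p,k}\ne 0\}$ and $\sum_{k}n_{p,k}(L)=n+2=\rk(L)$, it is natural to think of the $n+2$ rank-one summands in a fixed Jordan basis of $L\otimes\Zp$ as $n+2$ ``slots'', each labelled by a level $k_i\in\{0,1,\ldots,\mu(p)\}$. Then
\[
\sum_{k\geq 0}|k-j|\,n_{p,k}(L) \;=\;\sum_{i=1}^{n+2}|k_i-j|,
\]
and $n_{p,j}(L)$ is the number of slots with $k_i=j$. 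Write $M:=n+2-n_{p,j}(L)$ for the number of non-$j$ slots; the claim to prove becomes $\sum_{i=1}^{n+2}|k_i-j|\ge \mu(p)+\max(0,M-2)$.

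First I would record two basic contributions to this sum. Each non-$j$ slot contributes $\geq 1$, so their combined contribution is already at least $M$. On top of this, one slot sits at level $\mu(p)$ by definition of $\mu(p)$, and, crucially, at least one slot sits at level $0$ because $L$ is primitive (so $L\otimes\Zp$ is not $p$-divisible, forcing $n_{p,0}(L)\geq 1$). The former slot contributes $\mu(p)-j$ when $j<\mu(p)$, and the latter contributes $j$ when $j>0$; these two slots coincide only in the degenerate situation $j=\mu(p)=0$.

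The proof then splits into the three cases $j=0<\mu(p)$, $0<j<\mu(p)$, and $j=\mu(p)>0$. In each, adding the contributions from the distinguished slot(s) at level $0$ and level $\mu(p)$ to the ``$\geq 1$ per slot'' bound for the remaining non-$j$ slots gives
\[
\sum_{k\geq 0}|k-j|\,n_{p,k}(L) \;\geq\; \mu(p)+\max(0,M-2).
\]
For instance, when $0<j<\mu(p)$ the two distinguished slots are distinct and contribute $j+(\mu(p)-j)=\mu(p)$, while the remaining $M-2$ non-$j$ slots contribute $\geq M-2$. Subtracting $\mu(p)$ from both sides gives $m_{p,j}(L)\geq\max(0,M-2)=\max(0,n-n_{p,j}(L))$.

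The only subtlety is bookkeeping in the corner cases $M\leq 1$. When $M=0$, primitivity forces $j=\mu(p)=0$ and $m_{p,j}(L)=0$, which satisfies the vacuous bound $\max(0,-2)=0$; when $M=1$, the single non-$j$ slot is pinned down either by primitivity (forcing it to be the level-$0$ slot, hence $\mu(p)=j$) or by $\mu(p)$ (the unique top slot, forcing $j=0$), and in both cases one checks $m_{p,j}(L)=0$ and $\max(0,n-n_{p,j}(L))\leq 0$. Apart from this routine edge-case verification the argument is purely a counting inequality, so no serious obstacle is anticipated; the main point is simply identifying the two ``distinguished'' slots at levels $0$ and $\mu(p)$ that account for the $\mu(p)$ being subtracted.
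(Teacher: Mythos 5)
Your proof is correct and is essentially the paper's argument in different clothing: the two ``distinguished slots'' at levels $0$ (forced by primitivity) and $\mu(p)$ (forced by the definition of $\mu(p)$), whose contributions $j$ and $\mu(p)-j$ absorb the subtracted $\mu(p)$, correspond exactly to the paper's rewriting $m_{p,j}(L)=j(n_{p,0}(L)-1)+(\mu(p)-j)(n_{p,\mu(p)}(L)-1)+\sum_{k\ne 0,j,\mu(p)}|k-j|\,n_{p,k}(L)$, after which both arguments use $|k-j|\ge 1$ for the remaining non-$j$ terms. The paper handles the corner cases implicitly in one chain of inequalities where you spell them out, but there is no substantive difference.
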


\begin{proof}
(See also Figure \ref{figure:m_{p,j}}.) 
Note that $L_{p,0}\ne0$ by the primitivity of $L$, and $L_{p,\mu(p)}\ne0$ by the definition of $\mu(p)$. 
We have 
\begin{eqnarray*}
& & m_{p,j}(L) \\  
& = & 
j \, (n_{p,0}(L)-1) + (\mu(p)-j) (n_{p,\mu(p)}(L)-1) + \sum_{k\ne0, j, \mu(p)} | k-j | \, n_{p,k}(L) \\ 
& \geq & 
\sum_{k\ne j} n_{p,k}(L) - 2 \\
& = & 
n -n_{p,j}(L). 
\end{eqnarray*}
The inequality $m_{p,j}(L)\geq 0$ is clear from the second line. 
\end{proof}
  
\begin{proof}[(Proof of Proposition \ref{e(L) bdd split})]
Since we will not change the lattice $L$ through the argument, let us abbreviate 
$n_{p,j}(L)=n_{p,j}$, $m_{p,j}(L)=m_{p,j}$ and $\varepsilon_{p,j}(L)=\varepsilon_{p,j}$. 
We divide the set of prime divisors of $D(L)$ into the following six sets, some of which could be empty: 
\begin{eqnarray*}
P_1  & = &  \{ 2 \}, \\ 
P_2  & = &  P, \\ 
P_3  & = &  \{ \: p>2 \; | \; \exists j \: \: n_{p,j}=n+1 \:\}, \\  
P_4  & = &  \{ \: p>2 \; | \; \exists j \: \: n_{p,j}=n \: \}, \\  
P_5  & = &  \{ \: p>2 \; | \; \forall   j \: \: n_{p,j}<n \: \: \textrm{and} \: \: \exists j \: \: n_{p,j} >  n/2+1\: \}, \\  
P_6  & = &  \{ \: p\not\in P\cup \{ 2\} \; | \; \forall j \: \: n_{p,j} \leq n/2+1 \: \}.   
\end{eqnarray*}
We will show that for each $P_i$, 
there exists a constant $\varepsilon(i)<\infty$ independent of $L$ and $n$ such that $\prod_{p\in P_i}{\epL} \leq \varepsilon(i)$. 
Then our assertion follows by putting $\varepsilon = \prod_{i=1}^{6}\varepsilon(i)$. 

($P_1$) There exists at most one index $j$ such that $n_{2,j}>n/2+1$. 
We have $\varepsilon_{2,j} \leq 1$ for this index. 
For the remaining indices $j$ we have $n_{2,j} \leq n/2+1$, so $m_{2,j} \geq n/2-1$ by Lemma \ref{lem:estimate m_pj(L)}, 
hence $\varepsilon_{2,j} \leq 2^{(2-n)/4}$. 
Since there are at most $n+2$ indices $j$ with $L_{2,j}\ne0$, we obtain 
\begin{equation*}
\varepsilon_2(L) < 1 + (n+2)2^{(2-n)/4}. 
\end{equation*}
Since $(n+2)2^{(2-n)/4}$ converges to $0$ as $n\to\infty$, the number 
\begin{equation*}
\varepsilon(1) = \max_{n\geq3} ( 1 + (n+2)2^{(2-n)/4} )
\end{equation*}
is finite, and we have $\varepsilon_2(L) < \varepsilon(1)$. 

($P_2$) 
If $p\in P$, we have $m_{p,j} \geq (n^2-1)/4$ by calculating the definition of $m_{p,j}$, 
and thus ${\epL}\leq 4(n+2)p^{(1-n^2)/8}$. 
It follows that 
\begin{equation*}
\prod_{p\in P} {\epL} \leq \prod_{p>2} {\max} (4(n+2)p^{(1-n^2)/8}, 1). 
\end{equation*}
For fixed $n$ there are only finitely many $p$ such that $4(n+2)p^{(1-n^2)/8}>1$, so  the right-hand side is actually a finite product. 
When $n\geq6$ we have $4(n+2)p^{(1-n^2)/8}<1$ for any $p>2$, so this product gets equal to $1$. 
Therefore 
\begin{equation*}
\varepsilon(2) = \max_{n\geq3} \left( \prod_{p>2} {\max}(4(n+2)p^{(1-n^2)/8}, 1)  \right)
\end{equation*}
is finite, and we have $\prod_{p\in P}{\epL} \leq \varepsilon(2)$. 

($P_3$) 
For primes $p$ in $P_3$, we have $(n_{p,0}, n_{p, \mu(p)})= (1, n+1)$ or $(n+1, 1)$, and $n_{p,j}=0$ for other indices $j$. 
We have $(m_{p,0}, m_{p, \mu(p)})=(n\mu(p), 0)$ and $(0, n\mu(p))$ in the respective cases, so 
\begin{equation*}
{\epL} = (1+p^{-[(n+1)/2]}) + 2p^{-n\mu(p)/2} \leq 1+3p^{-2}. 
\end{equation*}
If we put 
\begin{equation*}
\varepsilon(3) = \prod_{p>2} (1+3p^{-2}), 
\end{equation*}
we have $\prod_{P_3}{\epL} < \varepsilon(3)$ because every factor of $\varepsilon(3)$ is larger than $1$. 
When $p\geq11$, we have $1+3p^{-2}<1+p^{-3/2}$, 
so $\varepsilon(3)$ is dominated by some multiple of $\zeta(3/2)$, hence finite. 

($P_4$) 
There are three possibilities: 
\begin{enumerate}
\item $(n_{p,0}, n_{p,\mu(p)})= (2, n)$ or $(n, 2)$, and $n_{p,j}=0$ for all other $j$; 
\item $(n_{p,0}, n_{p,\mu(p)})= (1, n)$ or $(n, 1)$, and $n_{p,j}=1$ for some $0<j<\mu(p)$. 
\item $(n_{p,0}, n_{p,\mu(p)})= (1, 1)$, and $n_{p,j}=n$ for some $0<j<\mu(p)$; 
\end{enumerate}
In case (1), we have 
\begin{eqnarray}\label{eqn:bound P4 (1)}
{\epL} 
& = & p^{-\mu(p)/2}(1+p^{-[n/2]}) + p^{(1-n)\mu(p)/2}(1+p^{-1}) \nonumber \\ 
& \leq & p^{-1/2}(1+p^{-2}) + p^{-3/2}(1+p^{-1}). 
\end{eqnarray}
In case (2), we have $m_{p,k}\geq1$ for $k$ with $n_{p,k}=n$, and $m_{p,k}\geq n-1$ for $k$ with $n_{p,k}=1$. 
Hence 
\begin{equation}\label{eqn:bound P4 (2)}
{\epL}  \leq  p^{-1/2}(1+p^{-[n/2]}) + 4p^{(1-n)/2} 
 \leq  p^{-1/2}(1+p^{-2}) + 4p^{-3/2}. 
\end{equation}
In case (3), we have $m_{p,j}=0$ for $j$ with $n_{p,j}=n$, and $m_{p,0}, m_{p,\mu(p)} \geq n$. 
Therefore 
\begin{equation}\label{eqn:bound P4 (3)}
{\epL}  \leq  (1+p^{-[n/2]}) + 4p^{-n/2} \leq  1+5p^{-2}. 
\end{equation}
We have the bounds \eqref{eqn:bound P4 (1)}, \eqref{eqn:bound P4 (2)}, \eqref{eqn:bound P4 (3)} in the respective cases, 
but actually $1+5p^{-2}$ is greater than other two bounds. 
Therefore 
\begin{equation*}
{\epL}\leq 1+5p^{-2} 
\end{equation*} 
in any case. 
If we put 
\begin{equation*}
\varepsilon(4) = \prod_{p>2}(1+5p^{-2}), 
\end{equation*}
we have $\prod_{P_4}{\epL}<\varepsilon(4)$. 
Since $1+5p^{-2}<1+p^{-3/2}$ in $p\geq29$, $\varepsilon(4)$ is dominated by a multiple of $\zeta(3/2)$ and hence finite. 

($P_5$) 
We must have $n\geq5$ in this case. 
There exists only one index $j$ with $n_{p,j} > n/2+1$, for which we have $m_{p,j}\geq1$ by Lemma \ref{lem:estimate m_pj(L)} 
and hence $\varepsilon_{p,j}\leq p^{-1/2}(1+p^{-2})$. 
There remain at most $(n+1)/2$ indices $j$ with $L_{p,j}\ne0$. 
For them we have $m_{p,j} > n/2$, so $\varepsilon_{p,j} < 2p^{-n/4}$. 
It follows that 
\begin{equation*}
{\epL} < p^{-1/2}(1+p^{-2}) + (n+1)p^{-n/4}. 
\end{equation*}
As in the ($P_2$) case, there are only finitely many pairs $(n, p)$ such that the right-hand side is greater than $1$. 
Therefore 
\begin{equation*}
\varepsilon(5) = \max_{n\geq5} \left( \prod_{p>2} {\max}(p^{-1/2}(1+p^{-2}) + (n+1)p^{-n/4}, \: 1) \right)
\end{equation*}
is finite, and we have $\prod_{P_5}{\epL} < \varepsilon(5)$. 

($P_6$) 
By Lemma \ref{lem:estimate m_pj(L)} we have $m_{p,j}\geq n/2-1$ 
and so $\varepsilon_{p,j}\leq 2p^{(2-n)/4}$ for every index $j$ with $L_{p,j}\ne0$. 
Thus ${\epL}\leq 2(n+1)p^{(2-n)/4}$. 
As before 
\begin{equation*}
\varepsilon(6) = \max_{n\geq4} \left( \prod_{p>2} {\max}(2(n+1)p^{(2-n)/4}, \: 1) \right)
\end{equation*}
is finite, and we have $\prod_{P_6}{\epL} \leq \varepsilon(6)$. 
The proof of Proposition \ref{e(L) bdd split} is now finished. 
\end{proof}

\begin{remark}\label{remark:epsilon(L) bound}
(1) We needed the condition $n\geq4$ only in the $(P_4)$-(3) case. 
In other cases the boundedness can be easily extended to $n=3$. 

(2) 
In the proof we actually gave a bound at each $n$, say $\varepsilon(i, n)$, 
and $\varepsilon(i)$ was defined as ${\max}_n(\varepsilon(i, n))$. 
It would be useful to record the explicit form of $\varepsilon(i, n)$. 
Avoiding small $n$ and sharpening the estimate for $p=2$, 
we may take the bound as follows.  
\begin{eqnarray*}
\varepsilon(1, n) & = & 1 + 2^{-n/2+1} \qquad (n\geq 14), \\  
\varepsilon(2, n) & = &  1\qquad (n\geq 6), \\  
\varepsilon(3, n) & = &  \prod_{p>2}(1+3p^{-n/2}) < \prod_{p>2}(1+p^{-n/2+1}), \\  
\varepsilon(4, n) & = &  \prod_{p>2}(1+5p^{-[n/2]}) < \zeta([n/2]-2), \\  
\varepsilon(5, n) & = &  1 \qquad (n\geq 14), \\  
\varepsilon(6, n) & = &  1 \qquad (n\geq 16). 
\end{eqnarray*}
In particular, the total bound satisfies 
\begin{equation*}
\prod_{i=1}^{6}\varepsilon(i, n) < \zeta([n/2]-2)^2 
\end{equation*}
in $n\geq16$, so $\varepsilon$ can be taken to be asymptotically $1$. 
There is still room of improvement 
(by refining the classification by ${\max}_{j}(n_{p,j})$ and the number of $j$ with $n_{p,j}\ne0$), 
but we stop here.  

(3) 
By a similar argument as in case $(P_1)$, we can see that 
${\epL} \leq 1 + 2(n+2)p^{(2-n)/4}$ for $p\not\in P\cup \{ 2\}$.  
The product $\prod_{p}(1 + 2(n+2)p^{(2-n)/4})$ converges at each $n\geq7$ and is bounded with respect to $n$. 
This gives a simpler proof in $n\geq7$. 
\end{remark}


\subsection{Non-split case}\label{ssec:vol ratio I nonsplit}

Next we consider the non-split case. 
Let $L$ be a lattice of signature $(2, n)$ with $n\geq2$. 
Let $l\in L$ be a reflective vector of non-split type. 
The sublattice 
\begin{equation*}\label{eqn:def L' in vol estimate}
L' = {\Z}l \oplus K \qquad \textrm{where} \; \; \; K=l^{\perp}\cap L, 
\end{equation*}
is of index $2$ in $L$. 
The vector $l$ is reflective of split type in $L'$. 
Hence the definitions and results in \S \ref{ssec:vol ratio I split} before Proposition \ref{e(L) bdd split} are valid for $(L', K)$. 
Our approach is to reduce the estimate of the sum of ${\HMp}(L, K)$ of \textit{non-split} type for $L$   
to that of ${\HMp}(L', K)$ of \textit{split} type for $L'$ over various $L'\subset L$. 
This reduction step will be done in \S \ref{ssec:vol ratio II nonsplit}. 
Here we prepare in advance the counterpart of Proposition \ref{e(L) bdd split}.

We assume that $L$ is primitive and estimate $\varepsilon(L')=\prod_{p}\varepsilon_{p}(L')$. 
(In many cases $L'$ remains primitive, but not always.) 
When $p>2$, we have $L\otimes{\Zp}=L'\otimes{\Zp}$ and hence $L'\otimes{\Zp}$ is primitive.

\begin{lemma}\label{lem:e(L') and e(L'')}
Assume that $L$ is primitive and write $L'=L''(2^{\rho})$ with $L''$ primitive. 
Then $\rho\leq2$ and $\varepsilon_2(L')=2^{\rho/2}\varepsilon_2(L'')$. 
\end{lemma}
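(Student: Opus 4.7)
The plan is to split the lemma into two independent claims: $(\text{a})$ the bound $\rho \leq 2$, which is a primitivity/index calculation at the prime $2$, and $(\text{b})$ the identity $\varepsilon_2(L')=2^{\rho/2}\varepsilon_2(L'')$, which reduces to a bookkeeping computation on Jordan decompositions. First I should remark that the scaling factor from $L'$ to $L''$ is automatically a power of $2$: at every odd prime $p$ the equality $L\otimes{\Z}_p=L'\otimes{\Z}_p$ (since $[L:L']=2$) together with the primitivity of $L$ gives $n_{p,0}(L')>0$, so the nontrivial divisibility of the form on $L'$ can only occur at $p=2$.

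For the bound $\rho \leq 2$, the key observation is that $[L:L']=2$ implies $2L \subset L'$. Hence for any $x, y \in L$ we have $(2x, 2y)_L = 4(x,y)_L \in (L', L')_{L'}$, and by the definition of $L'' $ as the primitive rescaling of $L'$ we have $(L', L')_{L'} \subset 2^{\rho}{\Z}$. Combining, $(x, y)_L \in 2^{\rho - 2}{\Z}$ for every $x, y \in L$. Since $L$ is primitive, the gcd of all inner products on $L$ is $1$, forcing $\rho - 2 \leq 0$, i.e.\ $\rho \leq 2$.

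For the identity, I would simply unwind the definitions. A Jordan decomposition of $L''\otimes{\Z}_2$ gives one of $L'\otimes{\Z}_2$ by shifting every block by $\rho$: if $L''\otimes{\Z}_2 = \bigoplus_{j\geq 0} L''_{2,j}(2^j)$ with $L''_{2,0}\ne 0$, then
\begin{equation*}
L'\otimes{\Z}_2 \;=\; \bigoplus_{j \geq 0} L''_{2,j}(2^{j+\rho}),
\end{equation*}
so $n_{2,k}(L') = n_{2,k-\rho}(L'')$ for $k \geq \rho$ and zero otherwise, and consequently $\mu_2(L') = \mu_2(L'') + \rho$. Substituting into the definition of $m_{p,j}$, for any index $j' \geq 0$ with $L''_{2,j'} \neq 0$ I get, setting $j = j'+\rho$,
\begin{equation*}
m_{2,j}(L') \;=\; \sum_{k'\geq 0} |k' - j'|\, n_{2,k'}(L'') \;-\; \bigl(\mu_2(L'') + \rho\bigr) \;=\; m_{2,j'}(L'') - \rho.
\end{equation*}
Therefore $\varepsilon_{2,j}(L') = 2^{-m_{2,j}(L')/2} = 2^{\rho/2}\,\varepsilon_{2,j'}(L'')$. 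Summing over all $j'$ with $L''_{2,j'}\ne 0$, which via $j = j'+\rho$ parametrizes exactly the indices with $L'_{2,j}\ne 0$, gives $\varepsilon_2(L') = 2^{\rho/2}\,\varepsilon_2(L'')$.

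The only real content is the inequality $\rho \leq 2$; the rest is a tautology from the shift in Jordan indices. I expect no technical obstacle beyond making sure the bound argument is stated in the right $2$-adic form, namely using $2L \subset L'$ to push the divisibility of the $L'$-form through to $L$.
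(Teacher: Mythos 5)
Your proposal is correct and follows essentially the same route as the paper: the identity $\varepsilon_2(L')=2^{\rho/2}\varepsilon_2(L'')$ via the index shift $n_{2,k}(L'')=n_{2,k+\rho}(L')$, $\mu(2)'=\mu(2)''+\rho$ and hence $m_{2,j+\rho}(L')=m_{2,j}(L'')-\rho$; and the bound $\rho\leq2$ from $2L\subset L'$ forcing $4(x,y)\in 2^{\rho}\Z$ against the primitivity of $L$ (the paper phrases this $2$-adically with a pair of vectors whose pairing is a unit in $\Z_2$, which is the same argument).
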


\begin{proof}
We have $n_{2,k}(L'')=n_{2,k+\rho}(L')$ for every $k$. 
In particular, if we write $D(L')_2=2^{\mu(2)'}$ and $D(L'')_2=2^{\mu(2)''}$, then $\mu(2)'=\mu(2)''+\rho$. 
By the definition of $m_{2,j}$ we see that  
\begin{equation*}
m_{2,j}(L'')+\mu(2)''=m_{2,j+\rho}(L')+\mu(2)'. 
\end{equation*} 
Hence $m_{2,j}(L'')=m_{2,j+\rho}(L')+\rho$, and so 
$2^{\rho/2}\varepsilon_{2,j}(L'')=\varepsilon_{2,j+\rho}(L')$. 
This implies $\varepsilon_2(L')=2^{\rho/2}\varepsilon_2(L'')$. 

We next check $\rho\leq2$.  
By the primitivity of $L\otimes{\Z}_2$, there exist vectors $l, m\in L\otimes{\Z}_2$ such that $(l, m)\in{\Z}_2^{\times}$. 
Since $L'\otimes{\Z}_2\subset L\otimes{\Z}_2$ is of index $2$, 
$2l$ and $2m$ are contained in $L'\otimes{\Z}_2$, and satisfies $(2l, 2m)\in 4{\Z}_2^{\times}$. 
On the other hand, we must have $(l', m')\in 2^{\rho}{\Z}_2$ for all $l', m' \in L'\otimes{\Z}_2$. 
Therefore $\rho\leq2$. 
\end{proof}

\begin{proposition}\label{e(L') bdd nonsplit}
Let $L$ be a primitive lattice of signature $(2, n)$ with $n\geq4$, 
and let $L'={\Z}l\oplus K$ for a reflective vector $l\in L$ of non-split type. 
Then 
\begin{equation*}
\varepsilon(L') \leq 2\varepsilon
\end{equation*}
where 
$\varepsilon$ is the constant introduced in Proposition \ref{e(L) bdd split}. 
\end{proposition}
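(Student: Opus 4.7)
My plan is to reduce the bound on $\varepsilon(L')$ to Proposition \ref{e(L) bdd split} applied to a primitive replacement of $L'$. The lattice $L'$ has index $2$ in the primitive lattice $L$, so for every odd prime $p$ the equality $L' \otimes \mathbb{Z}_p = L \otimes \mathbb{Z}_p$ shows that $L'\otimes\mathbb{Z}_p$ is primitive; any failure of primitivity for $L'$ comes entirely from the prime $2$. Hence, invoking Lemma \ref{lem:e(L') and e(L'')}, I may write $L' = L''(2^\rho)$ with $L''$ primitive and $0 \leq \rho \leq 2$.

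Next I compare $\varepsilon_p(L')$ and $\varepsilon_p(L'')$ prime by prime. For odd $p$, multiplication by $2^\rho$ is a $\mathbb{Z}_p$-unit scaling, so the Jordan data $n_{p,j}$ and the exponent $\mu(p)$, and consequently $m_{p,j}$ and the auxiliary set $P$, coincide for $L'$ and $L''$. Looking at Definition \ref{def: epj(L)}, $\varepsilon_{p,j}$ is a function of these data alone, so $\varepsilon_p(L') = \varepsilon_p(L'')$ for each such $p$. For $p = 2$, Lemma \ref{lem:e(L') and e(L'')} supplies the exact identity $\varepsilon_2(L') = 2^{\rho/2}\,\varepsilon_2(L'') \leq 2\,\varepsilon_2(L'')$.

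Multiplying these relations over all primes dividing $D(L')$ yields $\varepsilon(L') \leq 2\,\varepsilon(L'')$. Since $L''$ is primitive of signature $(2,n)$ with $n \geq 4$, Proposition \ref{e(L) bdd split} applies and produces $\varepsilon(L'') \leq \varepsilon$. Combining, $\varepsilon(L') \leq 2\varepsilon$, which is the claim.

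The only nontrivial point is that Proposition \ref{e(L) bdd split} cannot be invoked directly on $L'$, since $L'$ need not be primitive. Lemma \ref{lem:e(L') and e(L'')} is precisely what absorbs this defect: the primitivization $L''$ exists, differs from $L'$ only at the prime $2$ by a controlled scaling, and the cost of passing to it is the harmless factor $2^{\rho/2} \leq 2$.
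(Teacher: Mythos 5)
Your proposal is correct and follows essentially the same route as the paper: reduce to the primitive lattice $L''$ with $L'=L''(2^{\rho})$, note $\varepsilon_p(L')=\varepsilon_p(L'')$ for odd $p$, use Lemma \ref{lem:e(L') and e(L'')} to absorb the factor $2^{\rho/2}\leq 2$ at $p=2$, and then apply Proposition \ref{e(L) bdd split} to $L''$. The extra justification you give for why non-primitivity of $L'$ is concentrated at $p=2$ is a correct elaboration of what the paper leaves implicit.
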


\begin{proof}
For $p>2$ we have $n_{p,j}(L')=n_{p,j}(L'')$ for every $j$,  so $\varepsilon_p(L')=\varepsilon_p(L'')$. 
By Lemma \ref{lem:e(L') and e(L'')} we have 
$\prod_{p|D(L')}\varepsilon_p(L') \leq 2\prod_{p|D(L'')}\varepsilon_p(L'')$. 
Then we can apply Proposition \ref{e(L) bdd split} to the primitive lattice $L''$. 
\end{proof}


\section{Volume sum}\label{sec:vol estimate II}

Single volume ratios have been estimated in \S \ref{sec:vol estimate I}. 
Next we take their sum over the sets ${\RI}$, ${\RII}$ of branch divisors of each type. 
The proof of Theorem \ref{branch obstruction} will be completed at the end of this section.   


\subsection{Split case}\label{ssec:vol ratio II split}

We first deal with reflective vectors of split type. 
Let $L$ be a lattice of signature $(2, n)$ with $n\geq3$. 
We will not assume primitivity of $L$ until Proposition \ref{prop: vol split final}. 
For each natural number $D$ dividing $D(L)$, we write $\mathcal{R}_{{\rm I}}^{+}(D)$ for 
the set of ${\Or}(L)$-equivalence classes of reflective vectors of split type of norm $-D$. 
Note that if we have a splitting $L\simeq \langle -D \rangle \oplus K$, then $D$ must divide $D(L)$. 
We thus have the division 
\begin{equation*}\label{eqn:division by norm}
{\RI} = \bigsqcup_{D|D(L)} \mathcal{R}_{{\rm I}}^{+}(D). 
\end{equation*}
We also denote by ${\RI}(D)$ the set of ${\rm O}(L)$-equivalence classes of reflective vectors of split type of norm $-D$. 
It is more convenient to work with ${\rm O}(L)$ than with ${\Or}(L)$. 

\begin{lemma}\label{lem:proper vs improper}
We have 
\begin{equation*}\label{eqn:+ relation split case}
\sum_{[l]\in \mathcal{R}_{{\rm I}}^{+}(D)} {\HMp}(L, K) = \sum_{[l]\in \mathcal{R}_{{\rm I}}(D)} {\HM}(L, K), 
\end{equation*}
where $K=l^{\perp}\cap L$ for $[l]\in\mathcal{R}_{{\rm I}}^{+}(D)$ or $\mathcal{R}_{{\rm I}}(D)$. 
\end{lemma}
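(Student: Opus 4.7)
\emph{Proof proposal.} The plan is to pass from ${\Or}(L)$-orbits to ${\rm O}(L)$-orbits by grouping, count the multiplicity via a stabilizer computation, and absorb the resulting index ratio via \eqref{eqn:vol ratio spinor diff}. Fix a reflective vector $l$ of split type with orthogonal splitting $L = {\Z}l \oplus K$. Any isometry of $L$ pointwise fixing $l$ preserves $K$, so the pointwise stabilizer is ${\rm Stab}_{{\rm O}(L)}(l) = {\rm O}(K)$, acting trivially on ${\Z}l$. Since the positive $2$-plane of $L_{\R}$ already lies in $K_{\R}$, such an isometry preserves ${\DL}$ if and only if its restriction to $K$ preserves $\mathcal{D}_K$, so ${\rm Stab}_{{\Or}(L)}(l) = {\Or}(K)$.

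By orbit--stabilizer, the number of ${\Or}(L)$-orbits contained in a given ${\rm O}(L)$-orbit $[l]_{{\rm O}(L)}$ equals the index $[{\rm O}(L) : {\Or}(L) \cdot {\rm O}(K)]$, and the second isomorphism theorem gives
\begin{equation*}
[\,{\Or}(L)\cdot{\rm O}(K) : {\Or}(L)\,] \;=\; [\,{\rm O}(K) : {\Or}(L)\cap{\rm O}(K)\,] \;=\; [\,{\rm O}(K):{\Or}(K)\,],
\end{equation*}
so the count equals $[{\rm O}(L):{\Or}(L)]/[{\rm O}(K):{\Or}(K)]$. Since the isometry class of $K = l^{\perp}\cap L$ is constant along an ${\rm O}(L)$-orbit of $l$, the quantity ${\HMp}(L,K)$ depends only on $[l]_{{\rm O}(L)}$. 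Grouping the left-hand side by ${\rm O}(L)$-orbits,
\begin{equation*}
\sum_{[l]\in \mathcal{R}_{{\rm I}}^{+}(D)} {\HMp}(L,K) \;=\; \sum_{[l]\in \mathcal{R}_{{\rm I}}(D)} \frac{[{\rm O}(L):{\Or}(L)]}{[{\rm O}(K):{\Or}(K)]}\,{\HMp}(L,K),
\end{equation*}
and an application of \eqref{eqn:vol ratio spinor diff} converts the right-hand side into $\sum_{[l]\in\mathcal{R}_{{\rm I}}(D)}{\HM}(L,K)$, as required.

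The only subtlety worth flagging is that $-1\in{\Or}(L)$ already identifies $l$ with $-l$ within each orbit set, so no spurious factor of $2$ arises from the $\pm l$ ambiguity; beyond this, the argument is pure bookkeeping with group indices and I foresee no real obstacle.
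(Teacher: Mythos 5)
Your proof is correct and takes essentially the same route as the paper: both arguments compute the cardinality of each fiber of the projection $\mathcal{R}_{{\rm I}}^{+}(D)\to\mathcal{R}_{{\rm I}}(D)$ to be $[{\rm O}(L):{\Or}(L)]/[{\rm O}(K):{\Or}(K)]$ and then absorb this factor via \eqref{eqn:vol ratio spinor diff}. The only difference is cosmetic: the paper justifies the fiber count by a direct case analysis on whether ${\rm O}(L)={\Or}(L)$, while you derive it more systematically from the identification ${\rm Stab}_{{\rm O}(L)}(l)={\rm O}(K)$ (valid precisely because $l$ is of split type), orbit--stabilizer, and the second isomorphism theorem.
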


\begin{proof}
We have a natural projection $\mathcal{R}_{{\rm I}}^{+}(D)\to{\RI}(D)$. 
The cardinality of the fiber over $[l]\in{\RI}(D)$ is at most $2$ and equal to 
\begin{equation*}\label{eqn:spinor difference}
[{\rm O}(L) : {\Or}(L)]/[{\rm O}(K) : {\Or}(K)]. 
\end{equation*}
Indeed, when ${\rm O}(L)={\Or}(L)$, we have $\mathcal{R}_{{\rm I}}^{+}(D)={\RI}(D)$ and also ${\rm O}(K)={\Or}(K)$; 
when ${\rm O}(L)\ne{\Or}(L)$, the fiber consists of one element if and only if ${\rm O}(L)\cdot l = {\Or}(L)\cdot l$, namely 
$\gamma(l)=l$ for some $\gamma \in {\rm O}(L)\backslash{\Or}(L)$. 
This is equivalent to ${\rm O}(K)\ne{\Or}(K)$. 
Now the claim follows by comparison with \eqref{eqn:vol ratio spinor diff}. 
\end{proof}

We first estimate $\sum_{{\RI}(D)} {\HM}(L, K)$ for each $D$, and next take their sum over all possible $D$. 
Two reflective vectors of split type are ${\rm O}(L)$-equivalent if and only if their orthogonal complements are isometric. 
Thus ${\RI}(D)$ is canonically identified with the set of isometry classes of lattices $K$ such that 
$K\oplus \langle -D\rangle \simeq L$. 
We consider division into genera: 
\begin{equation*}
{\RI}(D) = \bigsqcup_{\alpha=1}^{\kappa} {\RI}(D)_{\alpha}.  
\end{equation*} 
Each ${\RI}(D)_{\alpha}$ consists of isometry classes of lattices $K$ in the same genus. 

\begin{lemma}\label{lem:KK estimate}    
The number $\kappa$ of possible genera of $K$ is at most $9$.  
\end{lemma}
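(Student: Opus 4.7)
The genus of $K$ is determined by its signature, which equals $(2,n-1)$ independently of the splitting, together with the isometry class of the $p$-adic completion $K\otimes\Zp$ for every prime $p$. Since $L\simeq K\oplus\langle -D\rangle$, we have
\begin{equation*}
K\otimes\Zp \oplus \langle -D\rangle\otimes\Zp \simeq L\otimes\Zp \qquad \text{for every } p,
\end{equation*}
where the right-hand side is fixed and $\langle -D\rangle\otimes\Zp$ is determined by $p^{\nu(p)}$ together with the square class of $-D/p^{\nu(p)}$ in $\Zpu$. So the plan is to count, for each $p$, the number of local lattices $K\otimes\Zp$ that satisfy this cancellation equation, and then multiply the counts.

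First I would handle odd primes. For $p\ne 2$, Jordan decompositions of $\Zp$-lattices are unique up to isometry (cf.\ \cite{Ki} Section 5.3), and a rank-one unimodular (or scaled unimodular) $\Zp$-lattice is determined by its scale and discriminant square class. Cancellation of the rank-one piece at scale $p^{\nu(p)}$ then shows that $K\otimes\Zp$ is \emph{uniquely} determined by $L\otimes\Zp$. Thus odd primes contribute a factor of $1$ to $\kappa$.

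The real content is at $p=2$, where Jordan decompositions are not unique and cancellation of an odd rank-one summand can fail. Here I would use the normal form classification of $\Z_2$-lattices recalled in \S\ref{ssec:vol formula}: the only Jordan components of $L\otimes\Z_2$ that can differ between two admissible $K\otimes\Z_2$ are those at the three scales $2^{\nu(2)-1},\, 2^{\nu(2)},\, 2^{\nu(2)+1}$ adjacent to the vector $l$ (at other scales the components of $K\otimes\Z_2$ and $L\otimes\Z_2$ must agree by scale considerations). At each of these three scales the ambiguity comes from the finitely many ways an odd rank-one summand of fixed square class can be split off, modulo the standard $\Z_2$-identifications (such as $U\simeq\langle 1,-1\rangle$ at the appropriate scale and the combining of oddly-generated blocks across consecutive scales). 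A direct enumeration, appealing to the normal forms of \cite{Ki} Section 5.3, shows that each scale contributes at most $2$ choices, with interactions across scales further cutting down the count, yielding the bound $\kappa\le 9$.

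The main obstacle is the bookkeeping of step three: carefully checking that, in every possible local configuration of the three adjacent $2$-adic Jordan components of $L\otimes\Z_2$, the number of solutions $K\otimes\Z_2$ to the cancellation equation is bounded by $9$. This is a finite case analysis driven by the classification of $\Z_2$-lattice normal forms and the handful of known relations among them (most importantly the identification $\langle \epsilon_1,\epsilon_2\rangle\simeq\langle \epsilon_1',\epsilon_2'\rangle$ whenever $\epsilon_1\epsilon_2\equiv\epsilon_1'\epsilon_2'$ mod squares and the parity agrees), but it is purely local and independent of $n$ and $L$.
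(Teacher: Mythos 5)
Your reduction is sound in outline: the genus of $K$ is determined by its localizations, the signature is fixed, and for odd $p$ the Witt cancellation for $\Zp$-lattices (\cite{Ki} Corollary 5.3.1) does give a unique $K\otimes\Zp$, so everything indeed comes down to counting the $2$-adic possibilities. This much agrees with the paper. But the $p=2$ step, which is the entire content of the lemma, is not actually carried out in your proposal: you describe the case analysis as "the main obstacle" and assert its outcome. Two specific points make this more than a bookkeeping gap. First, the claim that only the Jordan components at the three scales $2^{\nu(2)-1},2^{\nu(2)},2^{\nu(2)+1}$ can differ between two admissible $K\otimes\Z_2$ is not obvious and itself requires proof: the non-uniqueness of $2$-adic Jordan splittings is governed by sign-walking and oddity-fusion along chains of adjacent components, and such chains can be long, so a priori the ambiguity is not localized to three scales. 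Second, the arithmetic does not cohere with the stated bound: "at most $2$ choices at each of three scales, with interactions further cutting down the count" would give at most $2^3=8$, not $9$; the fact that you land on $9$ only because that is the number in the statement suggests the enumeration has not been done.

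The paper avoids this enumeration entirely. After scaling so that $L$ is even, Nikulin's theory converts the problem into counting isometry classes of finite quadratic forms $A$ with $A_L\simeq A_{\langle -D\rangle}\oplus A$. For odd $p$ the component $A_p$ is unique (Wall's canonical form). For $p=2$ the paper invokes the Kawauchi--Kojima invariants $\sigma_r$, which are additive under orthogonal sum, take values in the nine-element semigroup $(\Z/8)\cup\{\infty\}$, and together with the underlying abelian group determine the isometry class. Since the underlying group of $A_2$ is forced, and $\sigma_r((A_{\langle -D\rangle})_2)$ is finite for all but one value of $r$, the subtraction $\sigma_r(A_2)=\sigma_r((A_L)_2)-\sigma_r((A_{\langle -D\rangle})_2)$ pins down $\sigma_r(A_2)$ for all but that single $r$, where at most $|(\Z/8)\cup\{\infty\}|=9$ values remain. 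That is where the constant $9$ comes from. If you want to complete your local-lattice version, you would either need to carry out the full $2$-adic normal-form analysis (including justifying the localization to finitely many scales) or substitute a complete invariant of $2$-adic forms such as $\sigma_r$, at which point you have essentially reproduced the paper's argument.
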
 

\begin{proof}
Scaling $L$ if necessary, we may assume that $L$ (and hence $K$) is even. 
By Nikulin's theory \cite{Ni}, it suffices to show that, with the discriminant forms $A_L$ and $A_{\langle -D \rangle}$ fixed, 
the number of isometry classes of finite quadratic forms $A$ such that 
\begin{equation}\label{eqn:prf of KK estimate}
A_L \simeq A_{\langle -D \rangle} \oplus A 
\end{equation}
is at most $9$. 

For $p>2$, the $p$-component $A_p$ of $A$ is uniquely determined by this relation, 
as can be seen from Wall's canonical form for quadratic forms on $p$-groups (\cite{Wa}). 
Alternatively, one can also directly resort to the Witt cancelation for ${\Zp}$-lattices in $p>2$ 
(see \cite{Ki} Corollary 5.3.1). 

For $p=2$ we use Kawauchi-Kojima's invariants $\sigma_r$ (\cite{K-K}) of quadratic forms on $2$-groups. 
(Here we identify, as in \cite{Wa} Theorem 5, 
quadratic forms and symmetric bilinear forms with no direct summand of order $2$.)   
These invariants are defined for each positive integer $r\geq1$,  
and take values in the semigroup $({\Z}/8)\cup\{\infty\}$. 
They have the properties that for two such forms $B$, $B'$, 
(i) $\sigma_r(B\oplus B')=\sigma_r(B)+\sigma_r(B')$, and 
(ii) $B$ and $B'$ are isometric if and only if their underlying abelian groups are isomorphic and 
$\sigma_r(B)=\sigma_r(B')$ for every $r\geq1$. 
Furthermore, (iii) when the abelian group underlying $B$ is isomorphic to ${\Z}/2^k$, 
we have $\sigma_r(B)<\infty$ for $r\ne k+1$. 

Now, with $(A_L)_2$ and $(A_{\langle -D \rangle})_2$ fixed in \eqref{eqn:prf of KK estimate}, 
the abelian group underlying $A_2$ is uniquely determined. 
We have $\sigma_r((A_{\langle -D \rangle})_2)<\infty$ except for one value of $r$. 
At these $r$, $\sigma_r(A_2)$ is uniquely determined by 
$\sigma_r(A_2)=\sigma_r((A_L)_2)-\sigma_r((A_{\langle -D \rangle})_2)$. 
Hence the isometry class of $A_2$ is determined by the value of $\sigma_r(A_2)$ at the remaining one $r$. 
\end{proof}

Since ${\HM}({\rm O}(K))$ depends only on the genus of $K$, we see that   
\begin{equation*}
\sum_{{\RI}(D)} {\HM}(L, K) = \sum_{\alpha=1}^{\kappa} |{\RI}(D)_{\alpha}| \cdot {\HM}(L, K). 
\end{equation*}
If $K\in {\RI}(D)_{\alpha}$, we have 
\begin{equation*}
|{\RI}(D)_{\alpha}| \leq g_{sp}^{+}(K)
\end{equation*}
because proper spinor genus coincides with proper equivalence class, 
which is finer than isometry class. 
We now substitute Proposition \ref{prop:vol split single}. 
We set 
\begin{equation}\label{eqn:def f(n)}
f(n) = \frac{2^{7}\cdot 9 \cdot \pi^{n/2+1} \cdot \zeta([n/2]+1)}{\Gamma(n/2+1)}. 
\end{equation}
Then 
\begin{equation*}\label{eqn:sum over RI(D)}
\sum_{\mathcal{R}_{{\rm I}}(D)} {\HM}(L, K) <  
f(n) \cdot D(L)^{-1/2} \cdot \prod_{p|D(L)} \varepsilon_{p,\nu(p)}(L), 
\end{equation*}
where the indices $\nu(p)$ are defined by $D=\prod_{p}p^{\nu(p)}$.  

We finally take the sum over the set of possible norms $-D$. 
We can identify $D=\prod_{p}p^{\nu(p)}$ with the multi-index $( \nu(p) )_{p|D(L)}$. 
If ${\RI}(D)\ne \emptyset$, then $L_{p,\nu(p)}\ne 0$ at each $p$. 
Thus the set of possible norms $-D$ can be regarded as a subset of the set of multi-indices 
$J = ( j(p) )_{p|D(L)}$ 
such that $L_{p,j(p)}\ne0$ at each $p$. 
Since ${\epjL}>0$ for all $(p, j)$ with $p|D(L)$ and $L_{p,j}\ne0$, we obtain by adding redundant $J$ 
\begin{eqnarray*}
\sum_{D}\sum_{\mathcal{R}_{{\rm I}}(D)}{\HM}(L, K) 
& < & \sum_{D} f(n) \cdot D(L)^{-1/2} \cdot \prod_{p|D(L)} \varepsilon_{p,\nu(p)}(L) \\
& \leq & f(n) \cdot D(L)^{-1/2} \cdot \sum_{J} \prod_{p|D(L)} \varepsilon_{p,j(p)}(L) \\ 
& = & f(n) \cdot D(L)^{-1/2} \cdot {\eL}
\end{eqnarray*}
where ${\eL}$ is as defined in Definition \ref{def: e(L)}. 

Let us summarize the argument so far, which worked without assuming $L$ primitive. 
This will be used again in the next section. 

\begin{lemma}\label{lem:semifinal estimate split without primitivity}
Let $L$ be a lattice of signature $(2, n)$ with $n\geq3$. 
Then 
\begin{equation*}
\sum_{[l]\in \mathcal{R}_{{\rm I}}}{\HMp}(L, K) < f(n) \cdot {\eL} \cdot D(L)^{-1/2}. 
\end{equation*}
\end{lemma}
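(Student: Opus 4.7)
The plan is to unfold the sum over ${\Or}(L)$-orbits of reflective vectors of split type into a triple sum---over possible norms $D$, over genera of the orthogonal complement $K$, and within each genus over isometry classes---then dominate each layer by the bounds already assembled in \S \ref{sec:vol estimate I}.

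First, I would apply Lemma \ref{lem:proper vs improper} to replace $\sum_{[l]\in\mathcal{R}_{{\rm I}}^{+}(D)}{\HMp}(L,K)$ by $\sum_{[l]\in\mathcal{R}_{{\rm I}}(D)}{\HM}(L,K)$, thus reducing to ${\rm O}(L)$-orbits and the unprimed volume ratio. Since any splitting $L\simeq\langle -D\rangle\oplus K$ forces $D\mid D(L)$, the set ${\RI}$ decomposes as the disjoint union of ${\RI}(D)$ over divisors $D$ of $D(L)$. Under ${\rm O}(L)$-equivalence, $\mathcal{R}_{{\rm I}}(D)$ is canonically identified with the set of isometry classes of lattices $K$ satisfying $K\oplus\langle -D\rangle\simeq L$.

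Next I would stratify ${\RI}(D)$ by genus. Since ${\HM}({\rm O}(K))$ depends only on the genus of $K$, the sum within a single genus equals $|\mathcal{R}_{{\rm I}}(D)_{\alpha}|\cdot{\HM}(L,K)$. The number of isometry classes inside one genus is bounded by the proper spinor class number $g_{sp}^{+}(K)$ (since proper spinor genus is finer than isometry class for indefinite lattices of rank $\geq 3$), and the number of genera is bounded by the absolute constant $9$ via Lemma \ref{lem:KK estimate}. The $g_{sp}^{+}(K)$ factor cancels with the $1/g_{sp}^{+}(K)$ in the single-ratio bound of Proposition \ref{prop:vol split single}, leaving
\begin{equation*}
\sum_{\mathcal{R}_{{\rm I}}(D)}{\HM}(L,K) < f(n)\cdot D(L)^{-1/2}\cdot \prod_{p\mid D(L)}\varepsilon_{p,\nu(p)}(L),
\end{equation*}
where the numerical constants $2^{7}$, $9$, $\pi^{n/2+1}/\Gamma(n/2+1)$, and $\zeta([n/2]+1)$ have been packaged into $f(n)$ as in \eqref{eqn:def f(n)}.

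Finally I would sum over $D$. Writing $D=\prod_{p}p^{\nu(p)}$ identifies each admissible $D$ with a multi-index $(\nu(p))_{p\mid D(L)}$ with $L_{p,\nu(p)}\ne 0$. Since all ${\epjL}$ are positive, enlarging the index set to all multi-indices $J=(j(p))_{p\mid D(L)}$ with $L_{p,j(p)}\ne 0$ only increases the sum, and this enlarged sum equals ${\eL}$ by Definition \ref{def: e(L)}. This yields the claimed bound. The whole argument never uses primitivity of $L$, which is why the lemma holds in that generality. The main conceptual obstacle was to see that the spinor-theoretic bookkeeping (the ratio $g_{sp}^{+}(L)/g_{sp}^{+}(K)$ in Proposition \ref{prop:vol split single} combined with $|\mathcal{R}_{{\rm I}}(D)_{\alpha}|\leq g_{sp}^{+}(K)$ and the spinor comparison in Lemma \ref{lem:proper vs improper}) conspires to give a bound depending only on the universal data $f(n)$, ${\eL}$ and $D(L)^{-1/2}$.
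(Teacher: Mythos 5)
Your proposal is correct and follows essentially the same route as the paper: Lemma \ref{lem:proper vs improper} to pass to ${\rm O}(L)$-orbits, the decomposition by norm $D\mid D(L)$, the genus stratification with $|\mathcal{R}_{{\rm I}}(D)_{\alpha}|\leq g_{sp}^{+}(K)$ cancelling against the $g_{sp}^{+}(K)^{-1}$ in Proposition \ref{prop:vol split single}, Lemma \ref{lem:KK estimate} for the factor $9$, and finally the enlargement of the sum over norms to all multi-indices $J$ to produce ${\eL}$. Nothing is missing.
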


Now assuming primitivity of $L$ and that $n\geq4$, 
we obtain from Proposition \ref{e(L) bdd split} the final estimate in the split case.  

\begin{proposition}\label{prop: vol split final}
For a primitive lattice $L$ of signature $(2, n)$ with $n\geq4$ we have  
\begin{equation*}
\sum_{[l]\in \mathcal{R}_{{\rm I}}}{\HMp}(L, K) < f(n) \cdot \varepsilon \cdot D(L)^{-1/2}  
\end{equation*}
where $\varepsilon$ is the constant introduced in Proposition \ref{e(L) bdd split} and $f(n)$ is the function defined by \eqref{eqn:def f(n)}. 
\end{proposition}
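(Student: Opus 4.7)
The strategy is essentially to combine the two preceding results in Section \ref{sec:vol estimate II}. Lemma \ref{lem:semifinal estimate split without primitivity} already supplies the estimate
\begin{equation*}
\sum_{[l]\in \mathcal{R}_{{\rm I}}}{\HMp}(L, K) \; < \; f(n) \cdot {\eL} \cdot D(L)^{-1/2}
\end{equation*}
for every lattice $L$ of signature $(2, n)$ with $n \geq 3$, with no primitivity assumption. So the only remaining task is to replace the lattice-dependent quantity ${\eL}$ by a uniform constant.

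First I would recall how the lemma itself is obtained, so that the logic is transparent: the passage from the sum of ${\HMp}(L, K)$ over ${\Or}(L)$-classes to the sum of ${\HM}(L, K)$ over ${\rm O}(L)$-classes is furnished by Lemma \ref{lem:proper vs improper}; then one decomposes ${\RI}$ by the norm $-D$, and within each $\mathcal{R}_{{\rm I}}(D)$ partitions by genus using Lemma \ref{lem:KK estimate} to get at most $9$ genera, the count of isometry classes in a single genus being absorbed by $g_{sp}^{+}(K)$ which cancels the corresponding denominator in Proposition \ref{prop:vol split single}. Summing the resulting $\prod_{p|D(L)} \varepsilon_{p,\nu(p)}(L)$ over $D$, and then padding over all multi-indices $J = (j(p))_p$ with $L_{p,j(p)} \ne 0$, produces exactly the factor ${\eL}$ in the bound. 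All of this is dimensionally clean and does not require primitivity.

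To deduce the proposition, I would then simply invoke Proposition \ref{e(L) bdd split}, which asserts that when $L$ is primitive and $n \geq 4$ there is a universal constant $\varepsilon$, independent of $L$ and $n$, with $\varepsilon(L) \leq \varepsilon$. Substituting this inequality into the lemma gives the desired bound in a single line.

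There is no genuine obstacle left at this stage: the conceptual cost was paid earlier in designing $\varepsilon(L)$ as a product of $\varepsilon_p(L)$ over $p \mid D(L)$ (so that multiplicativity across primes meshes with the Jordan decomposition) and in the six-class case analysis $P_1, \ldots, P_6$ in the proof of Proposition \ref{e(L) bdd split}, where primitivity and $n \geq 4$ are used (in particular in case $(P_4)$-(3)) to force each local factor to be either bounded by a convergent product or ultimately to trivialize as $n$ grows. Once those ingredients are in hand the proof of Proposition \ref{prop: vol split final} is immediate.
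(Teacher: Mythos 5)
Your proposal is correct and follows the paper's own argument exactly: the paper proves the proposition by substituting the uniform bound $\varepsilon(L)\leq\varepsilon$ from Proposition \ref{e(L) bdd split} into Lemma \ref{lem:semifinal estimate split without primitivity}, and your recapitulation of how that lemma is assembled (Lemma \ref{lem:proper vs improper}, the decomposition by norm and genus, the cancellation of $g_{sp}^{+}(K)$, and the padding over multi-indices $J$) matches the text. No gaps.
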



\subsection{Non-split case}\label{ssec:vol ratio II nonsplit}

We next consider the non-split case. 
Let $L$ be a lattice of signature $(2, n)$ with $n\geq3$. 
Recall from \S \ref{ssec:vol ratio I nonsplit} that for a reflective vector $l\in L$ of non-split type, 
our approach is to reduce the calculation of ${\HMp}(L, K)$ to that of ${\HMp}(L', K)$ 
where $K=l^{\perp}\cap L$ and $L'={\Z}l \oplus K$. 
Let us denote 
\begin{equation*}
\Gamma_{L'} = {\Or}(L) \cap {\Or}(L'), 
\end{equation*}
the intersection considered inside ${\rm O}(L_{{\Q}}) = {\rm O}(L'_{{\Q}})$. 
If we abuse notation to write 
\begin{equation}\label{eqn:[O(L):O(L')]}
[{\Or}(L):{\Or}(L')] = [{\Or}(L):\Gamma_{L'}]/[{\Or}(L'):\Gamma_{L'}], 
\end{equation}
we have by the relation \eqref{eqn:HM cofinite subgrp} 
\begin{equation}\label{eqn:HMLK and HML'K}
{\HMp}(L, K) 
=  [{\Or}(L):{\Or}(L')]  \cdot {\HMp}(L', K). 
\end{equation}

Let $T$ be the set of index $2$ sublattices $L'$ of $L$ for which 
there exists a reflective vector $l$ of $L$ of non-split type such that $L'={\Z}l\oplus(l^{\perp}\cap L)$.  
We write $\mathcal{T}=T/{\Or}(L)$. 
For each $L'\in T$ let $R[L']$ be the set of vectors $l\in L'$ which is primitive in $L'$ and splits $L'$, namely $L'={\Z}l\oplus(l^{\perp}\cap L')$. 
We put $\mathcal{R}[L'] = R[L']/{\Or}(L')$. 
In other words, $\mathcal{R}[L']$ is $\mathcal{R}_{{\rm I}}$ for $L'$. 

\begin{lemma}\label{lem:sum vol(L,K) and vol(L',K)} 
We have 
\begin{equation}\label{eqn:sum vol(L,K) and vol(L',K)} 
\sum_{[l]\in\mathcal{R}_{{\rm II}}}{\HMp}(L, K) \leq 
\sum_{[L']\in\mathcal{T}} [{\Or}(L):\Gamma_{L'}] \left( \sum_{[l]\in \mathcal{R}[L']} {\HMp}(L', K) \right). 
\end{equation}
Here $K=l^{\perp}\cap L$ for $[l]\in\mathcal{R}_{{\rm II}}$ in the left-hand side, 
while $K=l^{\perp}\cap L'$ for $[l]\in\mathcal{R}[L']$ in the right-hand side. 
\end{lemma}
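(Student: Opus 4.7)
The idea is to rewrite the left-hand side as a double sum, first over the ${\Or}(L)$-orbit of the associated sublattice $L'={\Z}l\oplus(l^{\perp}\cap L)\in T$, and then, inside each such $L'$, over orbits of split reflective vectors under the smaller group $\Gamma_{L'}$. Passing from $\Gamma_{L'}$-orbits to ${\Or}(L')$-orbits produces the factor $[{\Or}(L'):\Gamma_{L'}]$, which combines with $[{\Or}(L):{\Or}(L')]$ from \eqref{eqn:HMLK and HML'K} via \eqref{eqn:[O(L):O(L')]} to yield precisely $[{\Or}(L):\Gamma_{L'}]$ on the right.

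First I would observe that the assignment $\phi\colon l\mapsto L(l):={\Z}l\oplus(l^{\perp}\cap L)$ is an ${\Or}(L)$-equivariant map from non-split reflective vectors of $L$ to $T$, and that the setwise stabilizer of $L'\in T$ in ${\Or}(L)$ equals $\Gamma_{L'}$ (any $\gamma\in{\Or}(L)$ preserving $L'$ setwise restricts to an isometry of $L'$). By orbit-stabilizer, the fiber of the induced map $\RII\to\mathcal{T}$ over $[L']$ is in bijection with $\Omega_{L'}/\Gamma_{L'}$, where
\begin{equation*}
\Omega_{L'}:=\phi^{-1}(L')=\{\,l\in L'\mid l\text{ is non-split reflective in }L\text{ with }L(l)=L'\,\}.
\end{equation*}
By definition of non-split reflectivity, every $l\in\Omega_{L'}$ is primitive in $L'$ and splits $L'$, so $\Omega_{L'}\subseteq R[L']$.

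Next, grouping the left-hand side by $[L']\in\mathcal{T}$ and invoking \eqref{eqn:HMLK and HML'K}, I obtain
\begin{equation*}
\sum_{[l]\in\RII}{\HMp}(L,K)=\sum_{[L']\in\mathcal{T}}[{\Or}(L):{\Or}(L')]\sum_{[l]\in\Omega_{L'}/\Gamma_{L'}}{\HMp}(L',K).
\end{equation*}
For each fixed $L'$ the inner sum is bounded above by the analogous sum over $R[L']/\Gamma_{L'}$, since $\Omega_{L'}\subseteq R[L']$. The natural surjection $R[L']/\Gamma_{L'}\to\mathcal{R}[L']=R[L']/{\Or}(L')$ has fibers of size at most $[{\Or}(L'):\Gamma_{L'}]$ (each ${\Or}(L')$-orbit splits into at most that many $\Gamma_{L'}$-orbits), hence
\begin{equation*}
\sum_{[l]\in\Omega_{L'}/\Gamma_{L'}}{\HMp}(L',K)\;\leq\;[{\Or}(L'):\Gamma_{L'}]\sum_{[l]\in\mathcal{R}[L']}{\HMp}(L',K).
\end{equation*}
Substituting back and using \eqref{eqn:[O(L):O(L')]} gives the claimed inequality.

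The only slightly subtle point is that a vector $l\in R[L']$ can be primitive in $L'$ without being primitive in $L$, so in general $\Omega_{L'}$ is a proper subset of $R[L']$; this inclusion, together with the orbit-comparison step between $\Gamma_{L'}$ and ${\Or}(L')$, is exactly what forces the statement to be an inequality rather than an equality. Neither of these losses will matter for the downstream application, since the factor $[{\Or}(L):\Gamma_{L'}]$ is absorbed into the bound on $\mathcal{T}$ computed in the next subsection.
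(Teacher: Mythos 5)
Your proof is correct and takes essentially the same route as the paper: your $\Omega_{L'}$ is precisely the paper's $R'[L']$, and the three steps --- decomposing $\RII$ over $\mathcal{T}$ via the stabilizer $\Gamma_{L'}$, enlarging $\Omega_{L'}/\Gamma_{L'}$ to $R[L']/\Gamma_{L'}$, and bounding the fibers of $R[L']/\Gamma_{L'}\to\mathcal{R}[L']$ by $[{\Or}(L'):\Gamma_{L'}]$ --- match the paper's argument exactly.
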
 

\begin{proof}
For each $L'\in T$, let $R'[L']\subset R[L']$ be the subset consisting of splitting vectors $l$ of $L'$ such that 
$l$ is still primitive in $L$ and that $l^{\perp}\cap L = l^{\perp}\cap L'$. 
This is equal to the set of reflective vectors $l$ of $L$ of non-split type such that $L'={\Z}l\oplus(l^{\perp}\cap L)$. 
Thus the set of reflective vectors of $L$ of non-split type is divided as 
$\bigsqcup_{L'\in T}R'[L']$, 
according to which index $2$ sublattice is ${\Z}l\oplus (l^{\perp}\cap L)$.  
Taking quotient by ${\Or}(L)$, we obtain 
\begin{equation*}
\mathcal{R}_{{\rm II}} = \bigsqcup_{[L']\in\mathcal{T}} R'[L']/\Gamma_{L'}
\end{equation*}
because $\Gamma_{L'}<{\Or}(L)$ is the stabilizer of $L'$ in the ${\Or}(L)$-action on $T$. 
Hence ${\RII}$ can be embedded into the \textit{formal} disjoint union 
\begin{equation*}
\bigsqcup_{[L']\in\mathcal{T}} R[L']/\Gamma_{L'}. 
\end{equation*}
(Note that when considered as sets of vectors of $L$, the sets $R[L']$ may have overlap with each other.) 
By \eqref{eqn:HMLK and HML'K} we have 
\begin{eqnarray*}
\sum_{[l]\in {\RII}}{\HMp}(L, K) 
&  =  & 
\sum_{[l]\in {\RII}} [{\Or}(L):{\Or}(L')]\cdot {\HMp}(L', K) \\  
& \leq &   
\sum_{[L']\in \mathcal{T}} [{\Or}(L):{\Or}(L')] \left( \sum_{[l]\in R[L']/\Gamma_{L'}} {\HMp}(L', K) \right). 
\end{eqnarray*}
Here $K=l^{\perp}\cap L$ in the first line, while $K=l^{\perp}\cap L'$ in the second line. 
Consider the projection $R[L']/\Gamma_{L'}\to \mathcal{R}[L']$. 
Its fibers have at most $[{\Or}(L'):\Gamma_{L'}]$ elements, so we have 
\begin{equation*}
\sum_{[l]\in R[L']/\Gamma_{L'}} {\HMp}(L', K) \leq [{\Or}(L'):\Gamma_{L'}] \cdot \sum_{[l]\in \mathcal{R}[L']} {\HMp}(L', K). 
\end{equation*}
Then our assertion follows by recalling \eqref{eqn:[O(L):O(L')]}.   
\end{proof} 

We estimate the right-hand side of \eqref{eqn:sum vol(L,K) and vol(L',K)}. 
Recall that Lemma \ref{lem:semifinal estimate split without primitivity} is still valid for $L'$. 
This gives for each $[L']\in\mathcal{T}$
\begin{equation*}
\sum_{\mathcal{R}[L']} {\HMp}(L', K) 
<  f(n)\cdot \varepsilon(L') \cdot D(L')^{-1/2}  
\leq  f(n)\cdot \varepsilon(L') \cdot D(L)^{-1/2}.  
\end{equation*}
In the second inequality we have 
$D(L') \geq D(L)$ 
because $A_L$ is an index $2$ quotient of an index $2$ subgroup of $A_{L'}$. 

We now assume primitivity of $L$ and $n\geq4$. 
By Proposition \ref{e(L') bdd nonsplit} we have 
\begin{equation*}
\sum_{\mathcal{R}[L']} {\HMp}(L', K) < f(n)\cdot 2\varepsilon \cdot D(L)^{-1/2}. 
\end{equation*}
Since the right-hand side does not depend on $L'$, we obtain  
\begin{equation*}
\sum_{[l]\in{\RII}} {\HMp}(L, K) < 
 \left( \sum_{[L']\in \mathcal{T}} [{\Or}(L):\Gamma_{L'}] \right) \cdot f(n)\cdot 2\varepsilon \cdot D(L)^{-1/2}. 
\end{equation*}
Since $\Gamma_{L'}<{\Or}(L)$ is the stabilizer of $L'\in T$ in the ${\Or}(L)$-action on $T$, 
then $[{\Or}(L):\Gamma_{L'}]$ equals to the cardinality of the ${\Or}(L)$-orbit of $L'$ in $T$. 
Therefore  
\begin{equation*}
\sum_{[L']\in \mathcal{T}} [{\Or}(L):\Gamma_{L'}] = |T| < 2^{n+2}. 
\end{equation*}
We arrive at the final estimate in the non-split case. 

\begin{proposition}\label{prop: vol nonsplit final}
For a primitive lattice $L$ of signature $(2, n)$ with $n\geq4$ we have  
\begin{equation*}
\sum_{[l]\in \mathcal{R}_{{\rm II}}}{\HMp}(L, K) < 2^{n+3} \cdot f(n) \cdot \varepsilon \cdot D(L)^{-1/2} 
\end{equation*}
where $\varepsilon$ is the constant introduced in Proposition \ref{e(L) bdd split} 
and $f(n)$ is the function defined by \eqref{eqn:def f(n)}. 
\end{proposition}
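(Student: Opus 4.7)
The plan is to reduce the non-split case to the split case of Proposition \ref{prop: vol split final} by replacing each reflective vector $l\in\mathcal{R}_{{\rm II}}$ with the ambient split pair $(L',K)$, where $L'=\Z l\oplus K$ is an index-$2$ sublattice of $L$. The first step is to apply Lemma \ref{lem:sum vol(L,K) and vol(L',K)}, which rewrites $\sum_{\mathcal{R}_{{\rm II}}}{\HMp}(L,K)$ as a double sum: an outer sum over the set $\mathcal{T}$ of ${\Or}(L)$-equivalence classes of such index-$2$ sublattices $L'\subset L$, weighted by $[{\Or}(L):\Gamma_{L'}]$, and an inner sum over $\mathcal{R}[L']$ which is exactly the split-type sum for $L'$.

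For each fixed $L'$, the inner sum is governed by Lemma \ref{lem:semifinal estimate split without primitivity} (which does not require primitivity of the lattice), giving $\sum_{\mathcal{R}[L']}{\HMp}(L',K) < f(n)\cdot\varepsilon(L')\cdot D(L')^{-1/2}$. Since $A_L$ is an index-$2$ quotient of an index-$2$ subgroup of $A_{L'}$, the exponent satisfies $D(L')\geq D(L)$, so $D(L')^{-1/2}$ can be replaced by $D(L)^{-1/2}$. Under the primitivity of $L$ and the hypothesis $n\geq4$, Proposition \ref{e(L') bdd nonsplit} then supplies the uniform bound $\varepsilon(L')\leq 2\varepsilon$, independently of $L'$. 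The last step is to bound the total weight $\sum_{[L']\in\mathcal{T}}[{\Or}(L):\Gamma_{L'}]$. Since $\Gamma_{L'}$ is the stabilizer of $L'$ for the ${\Or}(L)$-action on $T$, this total weight equals $|T|$; and since each index-$2$ sublattice of $L$ corresponds to a nonzero element of $L/2L\simeq(\Z/2)^{n+2}$, we get $|T|<2^{n+2}$. Assembling the three bounds yields the desired estimate with the factor $2^{n+2}\cdot 2 = 2^{n+3}$.

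The main obstacle is the uniformity of the bound on $\varepsilon(L')$, because the sublattice $L'$ need not be primitive even when $L$ is: at odd primes $L'\otimes\Zp=L\otimes\Zp$ is automatic, but at $p=2$ one may have $L'\simeq L''(2^\rho)$ for $\rho\in\{1,2\}$. This is precisely the reason why Proposition \ref{e(L') bdd nonsplit} (with its companion Lemma \ref{lem:e(L') and e(L'')}) is proved beforehand, absorbing the scaling discrepancy into the factor $2^{\rho/2}\leq 2$. Once that bound is in hand the remaining steps are purely combinatorial bookkeeping, and no further estimate of local densities is needed.
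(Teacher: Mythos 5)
Your proof is correct and follows essentially the same route as the paper: the reduction via Lemma \ref{lem:sum vol(L,K) and vol(L',K)}, the application of Lemma \ref{lem:semifinal estimate split without primitivity} to the possibly non-primitive $L'$ together with $D(L')\geq D(L)$, the uniform bound $\varepsilon(L')\leq 2\varepsilon$ from Proposition \ref{e(L') bdd nonsplit}, and the orbit-stabilizer count $|T|<2^{n+2}$ are exactly the steps the paper uses to assemble the factor $2^{n+3}$. You also correctly identify the loss of primitivity of $L'$ at $p=2$ as the one genuine subtlety.
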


The above method can be used to give estimate of more general sum $\sum_{l}{\HMp}(L, K)$ where $l$ runs over 
(up to ${\Or}(L)$) primitive vectors such that ${\Z}l\oplus (l^{\perp}\cap L)$ is of a fixed index in $L$. 


\subsection{Proof of Theorem \ref{branch obstruction}}\label{ssec:proof bigness}

We can now prove Theorem \ref{branch obstruction} by combining the estimates obtained so far. 
Let $L$ be a primitive lattice of signature $(2, n)$ with $n\geq4$. 
We put 
\begin{equation*}
g(n) =  f(n) \cdot (1+4^{n+2}) \cdot \varepsilon 
\end{equation*}
where $f(n)$ and $\varepsilon$ are as introduced in \eqref{eqn:def f(n)} and Proposition \ref{e(L) bdd split} respectively. 
By Propositions \ref{prop: vol split final} and \ref{prop: vol nonsplit final}, 
the left-hand side of \eqref{eqn:big via HM volume} is bounded as 
\begin{equation}\label{eqn:vol final}
\sum_{{\RI}}{\HMp}(L, K) + 2^{n+1}\cdot \sum_{{\RII}}{\HMp}(L, K)
\: < \: g(n) \cdot D(L)^{-1/2}.  
\end{equation}
By Proposition \ref{prop:big via HM volume}, the ${\Q}$-divisor $a\mathcal{L}-B/2$ is big if the inequality 
\begin{equation}\label{eqn:bigness in terms of D(L)}
g(n) \cdot (1+a^{-1})^{n-1} \cdot (n/2a) \leq \sqrt{D(L)} 
\end{equation}
holds. 

If we fix $n$, there are only finitely many primitive lattices $L$ whose $D(L)$ does not exceed this bound. 
Indeed, the discriminant is bounded by $|A_L|\leq D(L)^{n+1}$, 
and there are only finitely many lattices of fixed signature with bounded discriminant. 
Thus we obtain the finiteness at each fixed $n$. 
Next, when $n$ grows, the left-hand side of \eqref{eqn:bigness in terms of D(L)} converges to $0$ 
due to the rapid decay of the Gamma factor $\Gamma(n/2+1)^{-1}$ in $f(n)$. 
Therefore the inequality \eqref{eqn:bigness in terms of D(L)} holds for every primitive lattice $L$ when $n$ is sufficiently large. 
This completes the proof of Theorem \ref{branch obstruction}.

\section{Effective computation}\label{sec:explicit computation}

\subsection{Bound of $n$}\label{ssec:bound of n}

In this subsection we explicitly compute a bound of $n$ above which all ${\FL}$ is of general type. 
By \S \ref{sec:cusp obstruction}, we always have a nonzero ${\Or}(L)$-cusp form of weight $\leq n/2+11$. 
So we may take $a=n/2-11$ in \eqref{eqn:bigness in terms of D(L)}. 
Since $\varepsilon\to1$ (Remark \ref{remark:epsilon(L) bound} (2)) and 
$(1+a^{-1})^{n-1} \to e^2$ for this value of $a$, 
the resulting bound is asymptotically given by \eqref{eqn:estimate intro}. 
This is smaller than $1$ at least in $n\geq300$, which gives a first bound. 

We can improve this using Lemma \ref{Vinberg reduction}. 
In the following we assume that $L$ is a lattice of signature $(2, n)$ such that 
$(A_L)_p\simeq({\Z}/p)^{l_p}$ with $l_p \leq n/2+1$ for every $p$. 
It suffices to compute a bound of $n$ for such lattices. 
For them we can improve some part of \S \ref{sec:bigness setup} -- \S \ref{sec:vol estimate II} as follows. 

First, if $l\in L$ is reflective of non-split type, 
then ${\rm div}(l)=2^ab$ with $b$ odd and $a\leq1$. 
When $a=0$, we have 
$(A_K)_2\simeq{\Z}/2\oplus(A_L)_2$, 
$(A_{L'})_2\simeq{\Z}/2\oplus(A_K)_2$ and  
$[{\Or}(K):\Gamma_l] \leq 2^{l_2}$ 
by Lemma \ref{stabilizer split/nonsplit}. 
When $a=1$, we have $(A_K)_2\simeq{\Z}/4\oplus({\Z}/2)^{l_2-2}$ and $(A_{L'})_2\simeq({\Z}/4)^2\oplus({\Z}/2)^{l_2-2}$. 
The gluing element $x$ in $(A_K)_2$ satisfies $x=2y$ for every element $y$ of order $4$, so is ${\rm O}(A_K)$-invariant. 
Hence $\Gamma_l={\Or}(K)$. 
Thus the left-hand side of \eqref{eqn:big via HM volume} can be replaced by 
\begin{equation}\label{eqn:big via HM volume after Vinberg reduction}
\sum_{{\RI}}{\HMp}(L, K) + \sum_{{\RII}, a=1}{\HMp}(L, K) + 2^{l_2}\cdot \sum_{{\RII}, a=0}{\HMp}(L, K). 
\end{equation}

The spinor genera $g_{sp}^+(L)$, $g_{sp}^+(L')$, $g_{sp}^+(K)$ are always equal to $1$ by \cite{Ca} Theorem 11.1.5. 
Also the set $P$ is empty (for $L$ and also for $L'$). 
We will not touch on the estimates in Lemma \ref{lem:a_p and epj split} (1), (2). 
On the other hand, the bound \eqref{eqn:2^7bound} can be improved to $\leq4$ for $l$ of split type. 
For non-split type $l$, replacing $L$ by $L'$, the bound \eqref{eqn:2^7bound} can be sharpened to $\leq1$. 
Finally, we have 
\begin{equation*}
\varepsilon_2(L') =  2^{-(l_2+1)/2} + 2^{(l_2+1-n)/2} 
\end{equation*}
in the non-split case with $a=0$. 
In other cases we do not improve the estimate of $\varepsilon_p(L)$, $\varepsilon_p(L')$ in Remark \ref{remark:epsilon(L) bound} (2). 
(Note that $L'$ is primitive.) 
To sum up, writing  
\begin{equation*}
h(n) = 9\cdot \pi^{n/2+1}\cdot \zeta([n/2]-2)^3 / \Gamma(n/2+1), 
\end{equation*}
we have 
\begin{equation*}
\sum_{{\RI}}{\HMp}(L, K) \; < \; 4\cdot h(n)  \cdot D(L)^{-1/2}, 
\end{equation*}
\begin{equation*}
\sum_{\mathcal{R}[L']}{\HMp}(L', K)  \; < \;  h(n) \cdot D(L)^{-1/2} \qquad (a=1),  
\end{equation*}
and when $a=0$, 
\begin{eqnarray*}
2^{l_2} \cdot \sum_{\mathcal{R}[L']}{\HMp}(L', K) 
&<& (2^{(l_2-1)/2} + 2^{(3l_2+1-n)/2}) \cdot h(n)  \cdot  D(L)^{-1/2} \\   
& \leq & (2^{n/4} + 2^{n/4+2}) \cdot h(n)  \cdot  D(L)^{-1/2}.  
\end{eqnarray*}
Repeating the process in \S \ref{ssec:vol ratio II nonsplit}, we obtain 
\begin{equation*}
\eqref{eqn:big via HM volume after Vinberg reduction}  \; < \;  \tilde{h}(n) \cdot D(L)^{-1/2} 
\end{equation*}
where 
\begin{equation*}
\tilde{h}(n) = (4+ 2^{n+2} + 2^{5n/4+2} + 2^{5n/4+4}) \cdot h(n). 
\end{equation*}
Thus every ${\FL}$ is of general type when 
\begin{equation*}\label{eqn:vol final Vinberg reduction}
\tilde{h}(n) \cdot (1+a^{-1})^{n-1} \cdot (n/2a) \leq 1, \qquad a=n/2-11. 
\end{equation*}
This holds in $n\geq109$. 
When $n=108$, the left-hand side is still smaller $\sqrt{2}$, 
and the unimodular case is of general type by the next \S \ref{ssec:odd unimodular}. 
We thus obtain the bound stated in Theorem \ref{main}. 

It would be possible to improve the bound of $n$ by doing case-by-case refined estimate 
for lattices whose $D(L)$ is smaller than the uniform bound above.


\subsection{Example: odd unimodular lattice}\label{ssec:odd unimodular}

As an explicit example we work out the odd unimodular lattices $I_{2,n}= 2\langle 1 \rangle \oplus n\langle -1 \rangle$. 
The even unimodular case $II_{2,2+8m}$ is studied by Gritsenko-Hulek-Sankaran \cite{G-H-S3}, 
who proved that $\mathcal{F}_{II_{2,n}}$ is of general type in $n\geq42$. 

\begin{proposition}
The variety $\mathcal{F}_{I_{2,n}}$ is of general type when $n\geq39$. 
\end{proposition}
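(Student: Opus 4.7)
The plan is to follow the same strategy as in \S \ref{ssec:proof outline}, but replace the generic volume estimates of \S \ref{sec:vol estimate I}--\S \ref{sec:vol estimate II} by explicit computations tailored to the odd unimodular lattice $L = I_{2,n}$. The simplification is considerable: since $L$ is unimodular, every primitive $l \in L$ satisfies ${\rm div}(l) = 1$, and by Lemma \ref{lem:split/nonsplit} reflective vectors fall into just two ${\rm O}(L)$-orbits, of norms $-1$ (split) and $-2$ (non-split). Standard orbit analysis gives representatives $l_1 = -e_k$ and $l_2 = e_i + e_j$ whose orthogonal complements are $K_1 = I_{2,n-1}$ and $K_2 = \langle -2 \rangle \oplus I_{2,n-2}$ respectively. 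Each of $L, K_1, K_2$ has proper spinor class number $1$.

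Next I would observe that in the non-split case $A_{K_2} \simeq {\Z}/2$, so ${\rm O}(A_{K_2})$ is trivial and $\Gamma_l = {\rm O}(K_2)$ by Lemma \ref{stabilizer split/nonsplit}. This removes the factor $2^{n+1}$ appearing in Proposition \ref{prop:big via HM volume} in the present setting. Using the Gritsenko-Hulek-Sankaran formula \eqref{eqn:GHS formula} and the explicit local density formulas of \S \ref{ssec:vol formula}, I would compute the two ratios ${\HMp}(L, K_i)$ explicitly. For $K_1$, the comparison is between two lattices that are unimodular at every prime and reduces to factors of shape $1 \pm p^{-k}$, producing a ratio bounded by a multiple of $\pi \cdot \zeta([n/2]+1)/\Gamma(n/2+1)$. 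For $K_2$ only the $2$-adic density differs, and the additional Jordan piece $\langle -2 \rangle$ contributes an explicit constant factor whose size can be tracked through the $E_{2,j}$ and $q(L)$ corrections of the Kitaoka formula.

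With these two volumes in hand, I would invoke Theorem \ref{cusp obstruction}(1) to produce a cusp form of weight $n' \leq n/2 + l + 5$ where $l$ is the value from Table \ref{table l_0} corresponding to $n \mod 8$, and then apply Proposition \ref{prop:big via HM volume} with $a = n - n'$. The resulting inequality takes the shape
\[
\frac{C_1(n) + C_2(n)}{\Gamma(n/2+1)} < \left(1 + \frac{1}{a}\right)^{1-n} \cdot \frac{2a}{n},
\]
for explicit constants $C_i(n)$ growing only polynomially in $n$. Since the left-hand side decays like $1/\Gamma(n/2+1)$ while the right-hand side decays only polynomially (and tends to $\sim e^{-2} \cdot 2a/n$ for large $a$), the inequality holds for all sufficiently large $n$; a direct numerical check verifies that it already holds at $n = 39$. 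Combining the effectivity of $n'\mathcal{L} - \Delta$ coming from the cusp form with the bigness of $(n - n')\mathcal{L} - B/2$, the canonical divisor $K_{\overline{\FL}}$ is big, as required.

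The main obstacle will be bringing the threshold down to the claimed $n = 39$ rather than some larger value: one must use the sharpest cusp-form weight $n' = n/2 + l + 5$ from Table \ref{table l_0} depending on the residue $n \mod 8$ (and possibly the alternative bound from Theorem \ref{cusp obstruction}(2) when $4 \mid n$), and track the explicit constants $C_1(n), C_2(n)$ carefully --- in particular the $2$-adic contribution to $C_2(n)$ from the $\langle -2 \rangle$ piece of $K_2$. Once the numerical inequality is verified at $n = 39$, monotonicity of the Gamma quotient extends the conclusion to all $n \geq 39$.
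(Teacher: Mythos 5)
Your classification of the branch divisor is incomplete, and this is a genuine gap. For an odd unimodular lattice, a primitive vector's ${\rm O}(L)$-orbit is \emph{not} determined by its norm alone: vectors of the same norm split into distinct orbits according to whether the orthogonal complement is odd or even. Concretely, when $n\equiv 4 \pmod 8$ the lattice $I_{2,n}$ contains, besides the norm $-2$ vectors $e_i+e_j$ with odd complement $\langle -2\rangle\oplus I_{2,n-2}$, a second orbit of norm $-2$ reflective vectors whose orthogonal complement is the \emph{even} lattice $2U\oplus A_1\oplus mE_8$ (in the paper's notation this is the component coming from $l_3=\delta_1$ when $N=2$; note that $K_3$ admits no proper overlattice, so this complement inside $I_{2,n}$ really is even and cannot be ${\Or}(I_{2,n})$-equivalent to the odd one). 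So for $n=44, 52,\dots$ the branch divisor has three components, not two, and your sum in Proposition \ref{prop:big via HM volume} omits one of them; the bigness of $a\mathcal{L}-B/2$ does not follow from an underestimate of $B$. (A similar phenomenon occurs for the norm $-1$ vectors when $n\equiv 3\pmod 8$.) The paper avoids this trap by passing to the maximal even sublattice $L\simeq 2U\oplus mE_8\oplus D_N$ — for which ${\Or}(I_{2,n})={\Or}(L)$ except when $N=4$ — and enumerating reflective vectors there via the Eichler criterion, which is where the extra orbit for $N=2$ becomes visible.

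A second, quantitative problem: you take the cusp form from Theorem \ref{cusp obstruction}(1), giving weight $n'=n/2+l+5$ with $l$ from Table \ref{table l_0}, hence $a=n-n'$ as small as $n/2-10$ or $n/2-11$ in some residue classes. The paper instead computes, for each $N$, the minimal weight of ${\rm O}(A_{D_N})$-invariant cusp forms of type $\rho_L$ from the exact dimension formula (Table \ref{table l_0 D_N}), which yields lifted weights $\leq n/2+7$ and hence a larger $a=n/2+1-l$. Since the right-hand side $(1+a^{-1})^{1-n}\cdot 2a/n$ of the bigness criterion is quite sensitive to $a$ (at $n=44$ the two choices differ by roughly a factor of $3$), your asserted ``direct numerical check at $n=39$'' is not credible without the sharper lattice-specific weights; with the generic weight you would most likely land at a threshold noticeably above $39$. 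Your observation that $\Gamma_l={\Or}(K_2)$ because $A_{K_2}\simeq\Z/2$ has a unique element of order $2$ is correct and is a genuine simplification over the generic $2^{n+1}$ factor, but it does not compensate for the two issues above.
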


\begin{proof}
We work with the maximal even sublattice $L$ of $I_{2,n}$, which is isometric to 
\begin{equation*}
L \simeq 2U \oplus D_{n-2} \simeq 2U \oplus mE_8 \oplus D_N, \qquad 1\leq N \leq 8.  
\end{equation*}
By convention, $D_1=\langle -4\rangle$ and $D_2=2A_1$. 
The case $N=1$ is treated in \cite{G-H-S3}, where ${\FL}$ is shown to be of general type in $m\geq5$. 
We consider the remaining case $N\geq2$. 
The discriminant form $A=A_{D_{N}}$ is as follows. 
We write $\langle \varepsilon / 2^{\mu} \rangle$ for the quadratic form on ${\Z}/2^{\mu}$ 
for which the standard generator has norm $\varepsilon / 2^{\mu}$ modulo $2{\Z}$. 
\begin{itemize}
\item If $N$ is odd, $A\simeq \langle -N/4 \rangle$; 
\item if $N=\pm2 \; (8)$, $A\simeq \langle \mp1/2 \rangle \oplus \langle \mp1/2 \rangle$; 
\item if $N=4$, $A=({\Z}/2)^{\oplus2}=\langle x_1, x_2 \rangle$ with $(x_i, x_i)=1$ and $(x_1, x_2)=1/2$; 
\item if $N=8$, $A\simeq A_{U(2)}$. 
\end{itemize}
Hence ${\Or}(I_{2,n})={\Or}(L)$ when $N\ne4$ and $[{\Or}(L)\colon{\Or}(I_{2,n})]=3$ for $N=4$. 

One can work out the general dimension formula in \cite{Sk}, \cite{Bo2} for $\rho_{A}^{{\rm O}(A)}$-valued cusp forms.  
This gives for $l>2$ with $l+N/2\in 2{\Z}$ 
\begin{equation*}
{\dim} S_l(\rho_{A})^{{\rm O}(A)} = 
{\dim} S_l(\rho_{A}^{{\rm O}(A)}) = 
\begin{cases}
[(2l+N)/8]-1 & N: \textrm{odd}, \\ 
[(l-2)/4]        & N=2, \\ 
[(l-2)/6]        & N=4, \\ 
[l/4]            & N=6, \\ 
[l/4]-1         & N=8. \\ 
\end{cases}
\end{equation*}
The minimal weight $l$ of ${\rm O}(A)$-invariant cusp forms is as in Table \ref{table l_0 D_N}.  
\begin{table}[h]
\caption{}\label{table l_0 D_N}
\begin{center} 
\begin{tabular}{c|c|c|c|c|c|c|c}
$N$   & $2$ & $3$      & $4$   & $5$      & $6$ & $7$    & $8$    \\ \hline  
$l$    & $7$ & $13/2$ & $8$   & $11/2$ & $5$ & $9/2$ & $8$     \\ \hline  
\end{tabular}
\end{center}
\end{table}



Next we calculate the branch obstruction. 
Let $e, f$ be the hyperbolic basis of $U$ and 
$\delta_1, \cdots, \delta_N$ the root basis of $D_N$ with 
$(\delta_1, \delta_2)=0$, $(\delta_1, \delta_3)=1$ and $(\delta_i, \delta_{i+1})=1$ for $i\geq2$. 
Then 
$l_1 = e-f$ and $l_2= \delta_1 - \delta_2$ 
are reflective vectors of non-split type of norm $-2$, $-4$ respectively. 
When $N=2$, we also have the splitting $(-2)$-vector $l_3=\delta_1$. 
If we write $K_i=l_i^{\perp}\cap L$, then 
\begin{equation*}
K_1 \simeq \langle2\rangle \oplus U \oplus D_N \oplus mE_8, \quad 
\end{equation*}
\begin{equation*}
K_2 \simeq 2U \oplus D_{N-1}\oplus mE_8, \quad 
\end{equation*}
\begin{equation*}
K_3 \simeq 2U \oplus A_1\oplus mE_8. 
\end{equation*}
By the Eichler criterion (\cite{Sc}), 
every reflective vector of $L$ is ${\Or}(L)$-equivalent to one of $l_1, l_2, l_3$. 
The stabilizer $\Gamma_i$ of $l_i$ coincides to ${\Or}(K_i)$ when $(i, N)\ne(1, 6), (2, 5)$. 
In those exceptional cases, $[{\Or}(K_i):\Gamma_i]=3$. 
The volume ratio ${\HMp}(L, K_i)$ is calculated as follows: 
\begin{equation*}
\begin{array}{ccc}
    &       i=1        & i=2               \\  
 &  & \\  
N=2  & \displaystyle \frac{\pi \cdot (2\pi)^{4m+2} \cdot (1-2^{-8m-4})}{(4m+2)! \cdot L(4m+3, \chi_{-4})}      
         & \displaystyle \frac{\pi^{4m+3} \cdot (1+2^{-4m-2})}{(4m+2)! \cdot L(4m+3, \chi_{-4})}               \\ 
 &  & \\  
N=3 & \displaystyle \frac{2^{4m+9/2} \cdot (4m+3)!\cdot L(4m+3, \chi_{-8})}{\pi^{4m+3} \cdot (1-2^{-4m-3}) \cdot B_{8m+6}}      
         & \displaystyle \frac{2 \cdot (4m+3)! \cdot L(4m+3, \chi_{-4})}{\pi^{4m+3} \cdot (1-2^{-4m-3}) \cdot B_{8m+6}}               \\     
 &  & \\  
N=4     & \displaystyle \frac{(1+2^{-4m-3}) \cdot (4m+4)}{(1-2^{-4m-4}) \cdot |B_{4m+4}|}      
           & \displaystyle \frac{3m+3}{2^{4m+1} \cdot (1-2^{-4m-4}) \cdot |B_{4m+4}|}               \\    
 &  & \\  
N=5     & \displaystyle \frac{2^{4m+11/2} \cdot (4m+4)! \cdot L(4m+4, \chi_{8})}{\pi^{4m+4} \cdot (1-2^{-4m-4}) \cdot |B_{8m+8}|}       
           & \displaystyle \frac{2^{4m+4} \cdot (1-2^{-4m-3}) \cdot B_{4m+4}}{3\cdot B_{8m+8}}               \\    
 &  & \\  
N=6     & \displaystyle \frac{\pi \cdot (2\pi)^{4m+4} \cdot (1-2^{-8m-8})}{3\cdot(4m+4)! \cdot L(4m+5, \chi_{-4})}      
           & \displaystyle \frac{\pi^{4m+5} \cdot (1-2^{-4m-4})}{(4m+4)! \cdot L(4m+5, \chi_{-4})}               \\  
 &  & \\  
N=7     & \displaystyle \frac{2^{4m+13/2} \cdot (4m+5)! \cdot L(4m+5, \chi_{-8})}{\pi^{4m+5} \cdot (1+2^{-4m-5}) \cdot B_{8m+10}}       
           & \displaystyle \frac{2\cdot (4m+5)! \cdot L(4m+5, \chi_{-4})}{\pi^{4m+5} \cdot (1+2^{-4m-5}) \cdot B_{8m+10}}               \\  
 &  & \\  
N=8     & \displaystyle \frac{(1-2^{-4m-5}) \cdot (4m+6)}{(1-2^{-4m-6}) \cdot B_{4m+6}}      
           & \displaystyle \frac{2m+3}{2^{4m+4} \cdot (1-2^{-4m-6}) \cdot B_{4m+6}}               
\end{array}
\end{equation*}       
and 
\begin{equation*}
{\HMp}(L, K_3) = \frac{\pi^{4m+3}}{2^{4m+1}\cdot (4m+2)! \cdot L(4m+3, \chi_{-4})}. 
\end{equation*}
Here $\chi_D(\cdot )=\left( \frac{D}{\cdot}\right)$ is the quadratic Kronecker symbol and $B_{2k}$ is the Bernoulli number. 
We insert these datum and $a=n/2+1-l$ into 
\begin{equation*}
\sum_{i} {\HMp}(\Gamma_i)/{\HMp}({\Or}(L)) \: < \: (1+a^{-1})^{1-n} (2a/n). 
\end{equation*}
The resulting inequality holds when $n\geq 39$. 
\end{proof}

Using quasi-pullback of Borcherds' $\Phi_{12}$ as in \cite{G-H-S1}, \cite{G-H-S4}, 
we can see that ${\FL}$ is of general type also in $n=23, 24$ 
(embed $D_N$ in $E_8$ with $D_N^{\perp}\simeq D_{8-N}$). 
On the other hand, ${\FL}$ is rational in $n\leq16$ and unirational in $n\leq20$. 
See \cite{Marational} for $n\leq18$;  
$L$ is the period lattice of quartic $K3$ surfaces in $n=19$, 
and of double EPW sextics in $n=20$ (\cite{OG}, \cite{G-H-S4}).


\appendix

\section{Singularity over 0-dimensional cusp}\label{sec:appendix}

Let $L$ be a lattice of signature $(2, n)$. 
Let $\Gamma$ be a finite-index subgroup of ${\Or}(L)$ and $\mathcal{F}(\Gamma)=\Gamma\backslash{\DL}$ the associated modular variety. 
For simplicity we assume $-1\in\Gamma$, which does not affect $\mathcal{F}(\Gamma)$. 

$0$-dimensional cusps of the Baily-Borel compactification of $\mathcal{F}(\Gamma)$ correspond to 
primitive isotropic vectors $l$ in $L$ up to the $\Gamma$-action. 
We write $M_l=l^{\perp}\cap L/{\Z}l$. 
Let $N(l)_{{\Q}}$ be the stabilizer of $l$ in ${\Or}(L_{{\Q}})$. 
The unipotent radical $U(l)_{{\Q}}$ of $N(l)_{{\Q}}$ consists of the Eichler transvections $E_{l,m}$, $m\in (M_l)_{{\Q}}$, 
which is defined by (cf.~\cite{Sc} \S 3.7)
\begin{equation*}
E_{l,m}(v) = v - (\tilde{m}, v)l + (l, v)\tilde{m} - \frac{1}{2}(m, m)(l, v)l, \qquad v\in L_{{\Q}},  
\end{equation*}
where $\tilde{m}\in l^{\perp}\cap L_{{\Q}}$ is a lift of $m$. 
Thus $U(l)_{{\Q}}$ is canonically identified with $(M_l)_{{\Q}}$. 
We have the fundamental exact sequence 
\begin{equation*}
0 \to U(l)_{{\Q}} \to N(l)_{{\Q}} \stackrel{\pi}{\to} {\Or}((M_l)_{{\Q}})\to 1. 
\end{equation*}
If we \textit{choose} a splitting $f\colon L_{{\Q}}\simeq U_{{\Q}} \oplus (M_l)_{{\Q}}$ with $f(l)\in U_{{\Q}}$, 
we obtain a section of $\pi$ and thus a non-canonical isomorphism  
\begin{equation}\label{eqn:non-canonical splitting of N(l)Q}
\varphi_{f} : N(l)_{{\Q}} \stackrel{\simeq}{\to} {\Or}((M_l)_{{\Q}})\ltimes U(l)_{{\Q}} = {\Or}((M_l)_{{\Q}})\ltimes (M_l)_{{\Q}}. 
\end{equation}
We write 
$N(l)_{{\Z}}=N(l)_{{\Q}}\cap \Gamma$, 
$U(l)_{{\Z}}=U(l)_{{\Q}}\cap \Gamma$ and 
$\overline{N(l)}_{{\Z}}= N(l)_{{\Z}}/U(l)_{{\Z}}$. 
For instance, when $\Gamma={\Ost}(L)$ with $L$ even, we have $U(l)_{{\Z}}=M_l$. 

Choose representatives $l_1, \cdots, l_N\in L$ of primitive isotropic vectors modulo $\Gamma$. 
We put a ${\Z}$-structure on $(M_i)_{{\R}}=(M_{l_i})_{{\R}}$ by $U(l_i)_{{\Z}}$. 
Let $\mathcal{C}_i$ be the union of the positive cone $(M_i)_{{\R}}^{+}$ of $(M_i)_{{\R}}$ and 
the rays ${\R}_{\geq0}m$ for $m\in (M_i)_{{\Q}}$ in the boundary of $(M_i)_{{\R}}^{+}$. 
According to \cite{AMRT}, toroidal compactification of $\mathcal{F}(\Gamma)$ can be constructed by choosing for each $i$ 
an $\overline{N(l)}_{{\Z}}$-admissible fan $\Sigma_i$ in $(M_i)_{{\R}}$ with $|\Sigma_i|=\mathcal{C}_i$. 
(There is no ambiguity of choice at the $1$-dimensional cusps, 
and the choices of fan at each $i$ are independent.) 
By \cite{AMRT}, we can choose $\Sigma_i$ to be regular with respect to $U(l_i)_{{\Z}}$. 

Our purpose in this appendix is to supplement a proof of the following 

\begin{theorem}[\cite{G-H-S1}]\label{thm:cano sing 0dim cusp}
When the fans $\Sigma_i$ are regular, 
the toroidal compactification $\mathcal{F}(\Gamma)^{\Sigma}$ associated to $\Sigma=(\Sigma_i)$ 
has canonical singularity at the points lying over the $0$-dimensional cusps. 
\end{theorem}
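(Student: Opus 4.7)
The plan is to verify the Reid--Shepherd-Barron--Tai criterion locally at each point $x \in \mathcal{F}(\Gamma)^{\Sigma}$ lying over the 0-dimensional cusp associated to a primitive isotropic $l \in L$. Analytically, a neighborhood of such $x$ is isomorphic to $U_\sigma/G_\sigma$, where $\sigma \in \Sigma_l$ is the cone corresponding to the boundary stratum through $x$, $U_\sigma$ is the associated affine toric chart, and $G_\sigma$ is the finite stabilizer of $\sigma$ in $\overline{N(l)}_{\Z}$. Since $\Sigma_l$ is regular, $U_\sigma$ is smooth; the worst case is when $\sigma$ is maximal, so that $U_\sigma \cong \mathbb{A}^n$ has an isolated torus fixed point at which $G_\sigma$ acts linearly on the $n$-dimensional tangent space.

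Via the splitting $\varphi_f$ of \eqref{eqn:non-canonical splitting of N(l)Q} we regard $\overline{N(l)}_{\Z} \subset \Or((M_l)_{\Q})$. The induced linear action of $g \in G_\sigma$ on the tangent space at the torus fixed point has the same multiset of eigenvalues as the natural action of $g$ on $(M_l)_{\C}$, because $g$ has finite order and roots of unity are closed under inversion. The decisive input is then the signature. Since $M_l$ carries a form of signature $(1, n-1)$ and $g \in \Or(M_l)$ is semisimple, its eigenvalues on $(M_l)_{\C}$ are roots of unity whose non-real members occur in complex conjugate pairs $(\lambda, \bar\lambda)$, and each such pair contributes $\{a/r\}+\{(r-a)/r\} = 1$ to the Reid--Tai age. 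Therefore if $\mathrm{age}(g) < 1$ for some non-identity $g$, every eigenvalue of $g$ must be real, so $g^2 = \mathrm{id}$, and the age equals half the multiplicity of the eigenvalue $-1$; the strict inequality then forces this multiplicity to be exactly $1$, i.e., $g$ acts as a quasi-reflection on the tangent space. The Reid--Shepherd-Barron--Tai criterion --- which asserts canonical singularity of a finite quotient provided every non-identity stabilizer element that is not a quasi-reflection has age at least $1$ --- then yields canonical singularity of $U_\sigma/G_\sigma$ at $x$.

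The main obstacle, and the point where the argument of \cite{G-H-S1} benefits from the present supplement, lies in the precise identification of the local model at $x$. One must confirm that a neighborhood of $x$ in $\mathcal{F}(\Gamma)^{\Sigma}$ is genuinely a quotient of the smooth toric chart $U_\sigma$ by the representation of $G_\sigma$ coming from its embedding into $\Or((M_l)_{\R})$, and not a variant twisted either by the choice of the non-canonical splitting $\varphi_f$ or by the Eichler-translation lattice $U(l)_{\Z}$, which may be a proper sublattice of $M_l$; and that the linear action thus obtained is faithful so that the age computation applies directly. Once these identifications are pinned down --- and in particular once the cocharacter lattice of the torus $T = (M_l)_\C/U(l)_\Z$ is correctly matched with $U(l)_\Z$ under the $G_\sigma$-action --- the signature bound above handles every maximal cone $\sigma \in \Sigma_l$ uniformly, and lower-dimensional strata are treated by the same argument applied to sub-tori, completing the proof.
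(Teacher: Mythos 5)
There is a genuine gap, and it sits exactly at the point you defer to "once these identifications are pinned down." Your key computational claim is that the linearized action of $g \in G_\sigma$ on the tangent space at a boundary point has the same eigenvalue multiset as the action of $g$ on $(M_l)_{\C}$ through $\Or((M_l)_{\Q})$, hence is defined over $\Q$, hence has eigenvalues closed under complex conjugation, so that each non-real pair contributes $1$ to the Reid--Tai sum. This is false in general, and it is precisely the error in \cite{G-H-S1} that this appendix is written to repair (Remark \ref{rmk:GHS0dimcusp}). An element $g$ of $\overline{N(l)}_{{\Z}}$ acts on the torus $T_l$ not as a torus automorphism but as an affine transformation $(\gamma, a)\in{\rm Aut}(T_l)\ltimes T_l$, and for a general arithmetic group $\Gamma$ the translation component $a$ cannot be removed by any choice of $l'$. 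On the tangent space to the orbit ${\rm orb}(\sigma)$ the translation acts trivially and your rationality argument is sound; but on the normal space to the stratum, $g$ acts by a composition of a coordinate permutation (from $\gamma$ permuting the rays of $\sigma$) with a diagonal matrix coming from $a$. On a $d$-cycle of rays the resulting eigenvalues are $e(\mu)\zeta_d^k$, $k\in\Z/d$ --- the $e(\mu)$-shifted $d$-th roots of unity of the module $W_{d,\mu}$ --- and for general $\mu\in\Q/\Z$ this multiset is neither rational nor closed under conjugation. Your age lower bound therefore does not apply to the normal directions, which is where the whole difficulty lives. The paper's substitute for your signature/rationality argument is the notion of an admissible data $(U,(d_i,\mu_i)_i)$ (Definition \ref{def:admissible data}), whose admissibility encodes the faithfulness of $G\to{\rm Aut}(T)$, together with the case analysis of Lemma \ref{lem:no reflection case agegeq1} and Proposition \ref{prop:admissible rep cano sing}.

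Two smaller points. First, your statement of the Reid--Shepherd-Barron--Tai criterion --- that it suffices to check ${\rm age}(g)\geq1$ for all non-identity elements that are not quasi-reflections --- is not the correct form when the group does contain quasi-reflections: one must pass to the smooth quotient by the subgroup generated by quasi-reflections and verify the age condition for the \emph{induced} action, whose eigenvalues change. This is why the proof of Proposition \ref{prop:admissible rep cano sing} analyzes the reflective vector $\delta$ via Lemma \ref{lem:admi rep quasi-reflection} and then re-runs the age estimate on $\bar{V}=V_{\theta}/\langle g\rangle$. Second, the reduction "lower-dimensional strata are treated by the same argument applied to sub-tori" hides the same issue: at a point of a positive-dimensional orbit the stabilizer again acts on the normal space through shifted blocks $W_{d_i,\mu_i}$, and the restriction rule of Lemma \ref{lemma:restriction rule} is needed to see that admissibility is inherited by subgroups.
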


This theorem was first found by Gritsenko-Hulek-Sankaran (\cite{G-H-S1} \S 2.2), 
but as we explain later (Remark \ref{rmk:GHS0dimcusp}), their proof needs to be modified. 

Since Tai \cite{Ta}, proof of such a statement consists of the following steps: 
\begin{enumerate}
\item find a finite linear quotient model $V/G$ of the singularity; 
\item the Reid--Shepherd-Barron--Tai criterion \cite{Re}, \cite{Ta} tells whether $V/G$ has canonical singularity 
in terms of the eigenvalues of each element $g$ of $G$; 
\item so we are reduced to analyze $V$ as a representation of the cyclic group $\langle g\rangle$ for each $g\in G$. 
\end{enumerate}
In \S \ref{ssec:cyclic quotient in advance} we first present a certain class of representations $V$ of the cyclic groups ${\Z}/m$ 
and show that $V/({\Z}/m)$ has canonical singularity by the RST criterion. 
This part is elementary linear algebra and independent of modular varieties. 
We then study local model $V/G$ of the toroidal compactification and show (\S \ref{ssec:proof thm A1}) that for each $g\in G$, 
$V|_{\langle g\rangle}$ belongs to the class of representations we have studied in advance.

\subsection{Some cyclic quotients}\label{ssec:cyclic quotient in advance}

Let $G={\Z}/m$ be the standard cyclic group of order $m>1$. 
By a representation of $G$ we always mean a finite-dimensional complex representation. 
For $\mu\in \frac{1}{m}{\Z}/{\Z}$ we denote by $\chi_{\mu}$ the character $G\to{\C}^{\times}$ that sends $\bar{1}\in G$ to $e(\mu)$. 
For $d|m$ we write 
\begin{equation*}
V_d= \bigoplus_{k\in({\Z}/d)^{\times}}\chi_{k/d}. 
\end{equation*}
It is classical that a representation of $G$ defined over ${\Q}$ is isomorphic to $\oplus_{i}V_{d_i}$ for some $d_i|m$ (see \cite{Se} \S 13.1). 
When $m=m'm''$, we can view ${\Z}/m'$ as a subgroup of ${\Z}/m$ of index $m''$ by multiplication by $m''$: 
\begin{equation*}
{\Z}/m' \simeq m''{\Z}/m \subset {\Z}/m. 
\end{equation*}
If we put $d''=(d, m'')$ and $d'=d/d''$, the restriction of $V_d$ to ${\Z}/m'\subset {\Z}/m$ is isomorphic to 
a direct sum of copies of $V_{d'}$. 

If $d|m$ and $\mu\in \frac{1}{m}{\Z}/{\Z}$, we write $W_{d,\mu}$ for the $G$-representation 
\begin{equation*}
W_{d,\mu} = {\C}[{\Z}/d]\otimes \chi_{\mu} = \bigoplus_{k\in{\Z}/d} \chi_{k/d} \otimes \chi_{\mu}. 
\end{equation*}
Eigenvalues of $\bar{1}\in G$ on $W_{d,\mu}$ are the $e(\mu)$-shift of the $d$-th roots of $1$. 
Restriction rule is as follows. 

\begin{lemma}\label{lemma:restriction rule}
Let $m=m'm''$. 
We put $\mu'=m''\mu$, $d''=(d, m'')$ and $d'=d/d''$. 
The restriction of $W_{d,\mu}$ to ${\Z}/m' \subset {\Z}/m$ is isomorphic  to $(W_{d',\mu'})^{\oplus d''}$. 
\end{lemma}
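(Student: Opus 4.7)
The plan is to compute the restriction character-by-character and then re-collect terms. By definition, $W_{d,\mu}$ decomposes as a sum of characters of ${\Z}/m$:
\begin{equation*}
W_{d,\mu} = \bigoplus_{k\in{\Z}/d} \chi_{k/d+\mu}.
\end{equation*}
The inclusion ${\Z}/m' \hookrightarrow {\Z}/m$ sends the generator $\bar 1$ of ${\Z}/m'$ to $m''\cdot\bar 1$ in ${\Z}/m$. Thus the restriction of the character $\chi_{k/d+\mu}$ of ${\Z}/m$ is the character of ${\Z}/m'$ that sends $\bar 1$ to $e(m''k/d+m''\mu)=e(m''k/d+\mu')$. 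So the whole restriction is a direct sum, over $k\in{\Z}/d$, of the ${\Z}/m'$-characters $\chi_{m''k/d+\mu'}$.

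Next I would verify that $m''k/d$, modulo $1$, lies in $\tfrac{1}{d'}{\Z}/{\Z}$ and then in $\tfrac{1}{m'}{\Z}/{\Z}$. Writing $m''k/d=(m''/d'')\cdot k/d'$ puts it in $\tfrac{1}{d'}{\Z}/{\Z}$ since $d''\mid m''$. Inclusion $\tfrac{1}{d'}{\Z}/{\Z}\subset \tfrac{1}{m'}{\Z}/{\Z}$ amounts to $d'\mid m'$, which follows because $d'=d/d''$ divides $m/d''=m'(m''/d'')$ and is coprime to $m''/d''$ (the latter because $\gcd(d,m'')=d''$).

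Now I would count multiplicities. As $k$ ranges over ${\Z}/d$, its reduction mod $d'$ covers ${\Z}/d'$ with each residue appearing exactly $d''$ times. Since $\gcd(m''/d'',d')=1$ (same coprimality as above), multiplication by $m''/d''$ is a bijection of ${\Z}/d'$. Therefore the multiset $\{\, (m''/d'')k/d' \bmod 1 : k\in{\Z}/d\,\}$ is exactly $\tfrac{1}{d'}{\Z}/{\Z}$ taken with uniform multiplicity $d''$. Re-grouping the direct sum gives
\begin{equation*}
\operatorname{Res}^{{\Z}/m}_{{\Z}/m'} W_{d,\mu} \;\cong\; \bigoplus_{j\in{\Z}/d'}\bigl(\chi_{j/d'+\mu'}\bigr)^{\oplus d''} \;=\; (W_{d',\mu'})^{\oplus d''},
\end{equation*}
as claimed.

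I do not expect any serious obstacle: the statement is essentially bookkeeping inside $\tfrac{1}{m}{\Z}/{\Z}$, and the only non-trivial inputs are the two coprimality facts $\gcd(d/d'',m''/d'')=1$ and $d'\mid m'$, both immediate from $d\mid m$ and the definition $d''=\gcd(d,m'')$. The only mild subtlety to state carefully is that the two occurrences of the symbol $\chi_{\nu}$ refer to characters of different groups (${\Z}/m$ versus ${\Z}/m'$), and that restriction is compatible with this identification because the embedding is multiplication by $m''$.
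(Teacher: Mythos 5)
Your proof is correct and is essentially the paper's argument made explicit: the paper notes that $\chi_{\mu}|_{\Z/m'}=\chi_{\mu'}$ and that the image of $\Z/m'$ under $\Z/m\to\Z/d$ is $d''\Z/d\simeq\Z/d'$, then invokes $\C[\Z/d]|_{\Z/d'}\simeq\C[\Z/d']^{\oplus d''}$, which is exactly the character-multiplicity count you carry out by hand. Your explicit verification of the coprimality facts ($\gcd(d',m''/d'')=1$ and hence $d'\mid m'$) is a harmless elaboration of the same route.
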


\begin{proof}
We have $\chi_{\mu}|_{{\Z}/m'}=\chi_{\mu'}$. 
The image of ${\Z}/m'$ by the reduction map ${\Z}/m \to {\Z}/d$ is $d''{\Z}/d \simeq {\Z}/d'$,  
and ${\C}[{\Z}/d]|_{{\Z}/d'} \simeq {\C}[{\Z}/d']^{\oplus d''}$. 
\end{proof}

\begin{example}\label{example Wdmu}
Let $g\in {\rm GL}_{d}({\C})$ be the linear transformation 
\begin{equation*}
g = {\rm diag}(e(\alpha_1), \cdots, e(\alpha_d)) \circ (2, 3, \cdots, d, 1)
\end{equation*}
where $\alpha_i\in{\C}/{\Z}$. 
Let $m={\rm ord}(g)<\infty$. 
The eigenpolynomial of $g$ is $x^d-e(\sum_{i}\alpha_i)$. 
If $\mu\in{\Q}/{\Z}$ is an element with $d\mu=\sum_{i}\alpha_i$, 
it follows that ${\C}^d\simeq W_{d,\mu}$ as a representation of $\langle g \rangle \simeq {\Z}/m$. 
When $m=m'm''$, the restriction of the cyclic permutation $(2,\cdots, d, 1)$ to $\langle g^{m''} \rangle \simeq {\Z}/m'$ splits into 
$d''$ copies of cyclic permutation of length $d'$. 
In \S \ref{ssec:proof thm A1}, $W_{d,\mu}$ and Lemma \ref{lemma:restriction rule} will appear in this form.  
\end{example}

Based on Lemma \ref{lemma:restriction rule}, we make the following definition. 

\begin{definition}\label{def:admissible data}
Let $U$ be a representation of $G$ defined over ${\Q}$. 
Let $\{ (d_i, \mu_i) \}_i$ be a finite set of pairs $(d_i, \mu_i)$ with $d_i|m$ and $\mu_i\in \frac{1}{m}{\Z}/{\Z}$. 
We say that $\theta=(U, (d_i, \mu_i)_i)$ is an admissible data for $G$ if for every nontrivial subgroup $G'\simeq{\Z}/m'$ of $G$, either 
$U|_{G'}$ is nontrivial or 
$d_i':=d_i/(d_i, m'')>1$ for some $i$.  
\end{definition}

To such a data $\theta$ we associate the $G$-representation 
\begin{equation*}
V_{\theta} = U \oplus \bigoplus_{i} W_{d_i,\mu_i}. 
\end{equation*}
If we put 
\begin{equation}\label{eqn:restriction data}
\theta|_{G'} = ( U|_{G'}, ((d_i', \mu_i')^{\times d_i''})_i ) 
\end{equation}
for a subgroup $G'\simeq {\Z}/m'$ of $G$, Lemma \ref{lemma:restriction rule} shows that  
$V_{\theta}|_{G'}\simeq V_{\theta|_{G'}}$ as $G'$-representation. 
We have $(\theta|_{G'})|_{G''}=\theta|_{G''}$ for $G''\subset G' \subset G$. 
Hence admissibility of $\theta$ for $G$ implies that of $\theta|_{G'}$ for $G'$. 

Recall that a linear transformation of finite order is called \textit{quasi-reflection} (or \textit{pseudo-reflection}) 
if all but one of its eigenvalues are $1$. 

\begin{lemma}\label{lem:admi rep quasi-reflection}
Let $\theta=(U, (d_i, \mu_i)_i)$ be an admissible data for $G={\Z}/m$. 
Suppose that $G$ contains an element $g$ acting by quasi-reflection on $V_{\theta}$. 
Let $m'={\rm ord}(g)$ and $m''=m/m'$. 
Then $g$ acts on $V_{\theta}$ by reflection, so $m'=2$, and $m''$ is odd. 
The reflective vector $\delta\in V_{\theta}$ of $g$ is also an eigenvector of $G$, 
and contained in either $U$ or $W_{d_i,\mu_i}$ for some $i$. 
When $\delta\in U$, we have ${\C}\delta\simeq V_2$ as $G$-representation. 
When $\delta\in W_{d_i, \mu_i}$, we have $d_i=2$. 
\end{lemma}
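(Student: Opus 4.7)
The plan is to restrict everything to $G':=\langle g\rangle\cong\Z/m'$, where $m=m'm''$, and read off the constraints from the eigenvalues of $g$ on $V_\theta|_{G'}$. By Lemma~\ref{lemma:restriction rule} and \eqref{eqn:restriction data},
\begin{equation*}
V_\theta|_{G'} \;\simeq\; U|_{G'}\;\oplus\;\bigoplus_i (W_{d'_i,\mu'_i})^{\oplus d''_i},
\end{equation*}
where $d''_i=\gcd(d_i,m'')$, $d'_i=d_i/d''_i$, and $\mu'_i=m''\mu_i$. On each $W_{d'_i,\mu'_i}$ the generator of $G'$ acts with the $d'_i$ pairwise distinct eigenvalues $e(k/d'_i+\mu'_i)$; since $U$ is defined over $\Q$, the decomposition $U=\bigoplus_j V_{d_j}$ restricts to a direct sum of the form $\bigoplus V_{d'_j}^{\oplus(\mathrm{mult.})}$, in which each $V_{d'_j}$ contributes $\phi(d'_j)$ pairwise distinct eigenvalues, all of exact order $d'_j$.

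The hypothesis that $g$ is a quasi-reflection means exactly one eigenvalue of $g$ on $V_\theta|_{G'}$ differs from $1$. Any $V_{d'_j}$ with $d'_j\geq 3$, or any $W_{d'_i,\mu'_i}$ with $d'_i\geq 3$, would already contribute at least two distinct non-trivial eigenvalues, so $d'_i\leq 2$ and $d'_j\leq 2$ throughout; moreover the $V_2$-summand of $U|_{G'}$ and any $W_{d'_i,\mu'_i}$ carrying a non-trivial eigenvalue must appear with multiplicity exactly $1$. A case-by-case check of the surviving possibilities shows that the unique non-trivial eigenvalue is $-1$, so $g^2$ acts trivially on $V_\theta$. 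To upgrade this to $g^2=e$ in $G$, invoke admissibility of $\theta$ at the subgroup $\langle g^2\rangle$: if $g^2\neq e$, then either $U|_{\langle g^2\rangle}$ is nontrivial or some $W_{d_i,\mu_i}|_{\langle g^2\rangle}$ has permutation index $>1$, and either alternative produces a non-$1$ eigenvalue of $g^2$, a contradiction. Hence $g^2=e$ and $m'=2$. The main obstacle is precisely this combinatorial case-check, combined with the correct invocation of admissibility to promote $g^2|_{V_\theta}=\mathrm{id}$ to $g^2=e$ in $G$.

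It remains to locate $\delta$ and extract the parity of $m''$. Since the $(-1)$-eigenspace of $g$ is one-dimensional, and since $U$ and each $W_{d_i,\mu_i}$ are $G$-stable with $g$-eigenspaces that decompose into one-dimensional $G$-eigenspaces, $\delta$ lies entirely in one of these summands and is automatically a $G$-eigenvector. If $\delta\in U$, then $\delta$ spans some character $\chi_{k/d_j}\subset V_{d_j}$ with $k\in(\Z/d_j)^\times$ satisfying $\chi_{k/d_j}(m'')=e(km''/d_j)=-1$; this pins down $d_j=2$, $k=1$ and $m''/2\equiv 1/2\pmod{1}$, so $\C\delta\simeq V_2$ and $m''$ is odd. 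If $\delta\in W_{d_{i_0},\mu_{i_0}}$, the analysis of the second paragraph forces $d'_{i_0}=2$ and $d''_{i_0}=1$, giving $d_{i_0}=2$ and $\gcd(2,m'')=1$, again yielding $m''$ odd.
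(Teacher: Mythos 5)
Your overall strategy (restrict to $G'=\langle g\rangle$, count eigenvalues summand by summand, then use admissibility) is the same as the paper's, and the $\delta\in U$ branch and the promotion of $g^2|_{V_\theta}=\mathrm{id}$ to $g^2=e$ via admissibility at $\langle g^2\rangle$ are both correct. But there is a genuine gap in the $W$-branch. Your eigenvalue count only yields $d_{i_0}'\leq 2$, not $d_{i_0}'=2$: among the ``surviving possibilities'' is a summand $W_{d_{i_0},\mu_{i_0}}$ with $d_{i_0}'=1$, i.e.\ $W_{d_{i_0},\mu_{i_0}}|_{G'}\simeq (W_{1,\mu_{i_0}'})^{\oplus d_{i_0}}$, on which $g$ acts by the scalar $e(\mu_{i_0}')=e(m''\mu_{i_0})$. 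This scalar can be any $m'$-th root of unity, so neither your claim that ``the unique non-trivial eigenvalue is $-1$'' nor your later claim that ``the analysis of the second paragraph forces $d_{i_0}'=2$'' follows from the counting alone. (Concretely, $U=0$ with the single pair $(d_1,\mu_1)=(1,1/m)$ makes the generator a quasi-reflection with eigenvalue $e(1/m)$; of course this data is not admissible, which is exactly the point -- the counting argument does not see admissibility.)

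The missing step is to apply the admissibility condition to the subgroup $\langle g\rangle$ itself, which is where the paper uses it. If the unique nontrivial eigenvalue of $g$ sits in $W_{d_{i_0},\mu_{i_0}}$, then $g$ acts trivially on $U$ and on every $W_{d_j,\mu_j}$ with $j\neq i_0$; since these restrictions are semisimple, $U|_{\langle g\rangle}$ is trivial and $d_j'=1$ for all $j\neq i_0$. Admissibility of $\theta|_{\langle g\rangle}$ then forces $d_{i_0}'>1$, and only now does your multiplicity argument give $d_{i_0}'=2$, $d_{i_0}''=1$, eigenvalue $-1$, hence $d_{i_0}=2$ and $m''$ odd. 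You invoke admissibility only at $\langle g^2\rangle$ (and your closing sentence identifies that as its sole role), so as written the argument does not close; once admissibility at $\langle g\rangle$ is inserted at this point, the proof matches the paper's.
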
 

\begin{proof}
We can write $g=g_0^{m''}$ for a generator $g_0$ of $G$. 
There is only one eigenvalue $\lambda$ of $g_0$ such that $\lambda^{m''}\ne1$, 
and the remaining eigenvalues of $g_0$ are $m''$-th root of $1$. 
In particular, $\lambda$ has multiplicity $1$. 
Let $\delta$ be a generator of the $1$-dimensional $\lambda$-eigenspace of $g_0$. 
Since every eigenvalue of $g_0$ occurs in $U$ or one of $W_{d_i,\mu_i}$, 
the multiplicity one property implies that $\delta\in U$ or $\delta\in W_{d_i,\mu_i}$ for some $i$. 

First consider the case $\delta\in U$. 
Again by the multiplicity one, $\delta$ is contained in a sub $G$-representation isomorphic to $V_d$ for some $d|m$. 
Since $V_d|_{\langle g \rangle} \simeq (V_{d'})^{\oplus a}$ for $d'=d/(d, m'')$ while $g$ acts on this space by quasi-reflection, 
we must have $d'=2$ and $a=1$. 
Hence $d=2$, namely ${\C}\delta\simeq V_2$ as $G$-representation. 
Since $(-1)^{m''}=-1$, $m''$ is odd. 

Next consider the case $\delta\in W_{d_i,\mu_i}$. 
Since $g$ acts trivially on $U$ and $W_{d_j,\mu_j}$ for $j\ne i$, 
the admissibility condition says that we must have $d_i'>1$ in $W_{d_i,\mu_i}|_{\langle g \rangle}\simeq (W_{d_i',\mu_i'})^{\oplus d_i''}$. 
On the other hand, $g$ has only one $\ne1$ eigenvalue on $W_{d_i,\mu_i}$, 
so $d_i'=2$, $d_i''=1$ and $\mu_i'=0$ or $1/2$. 
Hence $d_i=2$ and $g$ acts by reflection. 
Since $W_{2,\mu_i}|_{\langle g\rangle} \simeq W_{2,\mu_i'}$, $m''$ is odd. 
\end{proof}

We can now present the main result of this subsection. 

\begin{proposition}\label{prop:admissible rep cano sing}
Let $\theta=(U, (d_i, \mu_i)_i)$ be an admissible data for $G={\Z}/m$. 
Then $V_{\theta}/G$ has canonical singularity. 
\end{proposition}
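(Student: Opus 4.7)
The plan is to apply the Reid--Shepherd-Barron--Tai criterion recalled at the beginning of this subsection: $V_\theta/G$ has canonical singularity once I verify that for every $g\in G$ which is neither the identity nor a quasi-reflection, the Reid--Tai sum $\sigma(g):=\sum_i\{\alpha_i\}$ satisfies $\sigma(g)\geq 1$, where the eigenvalues of $g$ on $V_\theta$ are $e(\alpha_i)$ with $\alpha_i\in[0,1)$. Given such a $g$, the first step is to replace $(G,\theta)$ by $(\langle g\rangle,\theta|_{\langle g\rangle})$: Lemma~\ref{lemma:restriction rule} and \eqref{eqn:restriction data} yield $V_\theta|_{\langle g\rangle}\simeq V_{\theta|_{\langle g\rangle}}$, and admissibility passes to subgroups as noted before Lemma~\ref{lem:admi rep quasi-reflection}, so I may assume $g$ generates $G$.

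Next I would carry out the per-summand calculation. By pairing $k\leftrightarrow e-k$ inside $(\mathbb Z/e)^\times$,
\[
\sigma(V_e)=\sum_{k\in(\mathbb Z/e)^\times}\{k/e\}=\begin{cases}0,&e=1,\\ 1/2,&e=2,\\ \varphi(e)/2\geq 1,&e\geq 3,\end{cases}
\]
and by Hermite's identity $\sum_{k=0}^{d-1}\lfloor\mu+k/d\rfloor=\lfloor d\mu\rfloor$,
\[
\sigma(W_{d,\mu})=\{d\mu\}+\tfrac{d-1}{2}\geq\tfrac{d-1}{2}.
\]
Hence any $V_e$ with $e\geq 3$, or any $W_{d,\mu}$ with $d\geq 3$, already contributes at least $1$ to $\sigma(g)$, and the proposition holds in that case.

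The residual case is $e_j,d_i\in\{1,2\}$ for all $j,i$. Admissibility at $G'=G$ forces some $e_j=2$ or some $d_i=2$, and each of these requires $m$ to be even (since $V_2=\chi_{1/2}$ demands $\tfrac12\in\tfrac1m\mathbb Z/\mathbb Z$, and $d_i\mid m$). If $m=2$, the only non-identity element of $G$ has order $2$ and acts with eigenvalues $\pm 1$; by Lemma~\ref{lem:admi rep quasi-reflection} a quasi-reflection has exactly one $(-1)$-eigenvalue, so a non-quasi-reflection $g\ne 1$ has at least two, giving $\sigma(g)\geq 1$. If $m\geq 4$ is even, I would test admissibility at the index-$2$ subgroup $G'\subset G$ (so $m''=2$): for every $e_j\in\{1,2\}$ the restriction $V_{e_j}|_{G'}$ is a sum of copies of $V_1$ and hence trivial, and for every $d_i\in\{1,2\}$ the restricted parameter $d_i'=d_i/(d_i,2)$ equals $1$. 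Neither clause of admissibility holds at $G'$, a contradiction; so this sub-case does not arise.

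The only subtle point, and the hard part of the argument, is the last one: recognising that admissibility must be invoked not merely at $G$ itself but also at its index-$2$ subgroup, which is precisely what rules out the otherwise delicate case $m\geq 4$ with all $d_i,e_j\leq 2$. Modulo this observation the proof reduces to bookkeeping on top of the explicit values of $\sigma(V_e)$ and $\sigma(W_{d,\mu})$.
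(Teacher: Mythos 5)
The combinatorial core of your argument is correct and closely parallels the paper's Lemma~\ref{lem:no reflection case agegeq1}: the cyclic reduction, the closed forms $\Sigma(V_e)=\varphi(e)/2$ and $\Sigma(W_{d,\mu})=\{d\mu\}+(d-1)/2$, and the observation that admissibility must be tested at the index-$2$ subgroup to kill the case $m\geq 4$ with all $e_j,d_i\leq 2$ are all right. But there is a genuine gap at the very first step: the criterion you invoke is not the Reid--Shepherd-Barron--Tai criterion. As stated in the paper (and in \cite{Re}, \cite{Ta}), the criterion ``$V/G$ is canonical iff $\Sigma_V(g)\geq 1$ for all $g\neq\mathrm{id}$'' is valid only when $G$ contains \emph{no} quasi-reflection. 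When $G$ does contain a quasi-reflection, one must first pass to the quotient $V/H$ by the subgroup $H$ generated by quasi-reflections (smooth by Chevalley--Shephard--Todd) and apply the criterion to the induced action of $G/H$ on $V/H$ --- whose eigenvalues are \emph{not} those of $G$ on $V$, since the reflective coordinate gets raised to a power. Simply discarding the quasi-reflections from the list of elements to be tested, as you do, is not a priori a sufficient condition for canonicity, and your proof offers no argument for why it would be here.

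This is exactly the content of the second half of the paper's proof, which you have no counterpart for: using Lemma~\ref{lem:admi rep quasi-reflection} one shows that any quasi-reflection in $G$ is an honest order-$2$ reflection $g$ with $m''=m/2$ odd and reflective vector $\delta$ sitting in a $V_2$ or a $W_{2,\mu_i}$ summand, one forms $\bar V=V_\theta/\langle g\rangle$ with its $\bar G=G/\langle g\rangle$-action, checks that $\bar V$ is again of admissible type (so that $\bar G$, being of odd order, has no quasi-reflections on it), and only then applies the reflection-free case. Your gap is fillable --- for instance, since $H=\langle g\rangle\cong{\Z}/2$ here, every nontrivial $\bar h\in\bar G$ admits a lift $h\in G\setminus H$ whose eigenvalue on $\delta$ has argument in $[0,1/2)$, whence $\Sigma_{V/H}(\bar h)\geq\Sigma_V(h)\geq 1$ by your own estimates --- but some such argument must be supplied; as written, the case in which $G$ contains a reflection (e.g.\ $m\equiv 2\bmod 4$ with a $V_2$ or $W_{2,\mu}$ summand) is simply not covered. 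I would also flag that you identify the index-$2$ admissibility test as ``the hard part''; in fact that step is routine, and the genuinely delicate point of the proposition is precisely the quasi-reflection reduction you have elided.
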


\begin{proof}
If $V$ is a representation of $G$ and $g\in G$ has eigenvalues $e(\alpha_1), \cdots, e(\alpha_n)$ with $0\leq \alpha_i <1$, 
the \textit{Reid-Tai sum} of $g$ is defined by 
\begin{equation*}
\Sigma_V(g) = \sum_{i=1}^{n} \alpha_i. 
\end{equation*}
(Similar invariant appears in the dimension formula for modular forms: see \cite{Sk}, \cite{Bo2}.) 
The Reid--Shepherd-Barron--Tai criterion \cite{Re}, \cite{Ta} says that 
when $G$ contains no quasi-reflection, 
$V/G$ has canonical singularity if and only if 
$\Sigma_V(g)\geq1$ for every $g\ne {\rm id} \in G$. 
We apply this to $V=V_{\theta}$ or its variation. 

We first consider the case $G$ contains no reflection on $V_{\theta}$. 

\begin{lemma}\label{lem:no reflection case agegeq1}
Let $\theta=(U, (d_i, \mu_i)_i)$ be an admissible data for $G=\langle g \rangle = {\Z}/m$. 
Assume that $g$ does not act as reflection on $V_{\theta}$. 
Then $\Sigma_{V_{\theta}}(g)\geq1$. 
\end{lemma}

\begin{proof}
Let $W=\bigoplus_i W_{d_i,\mu_i}$. 
It is clear that $\Sigma_{V_{\theta}}(g)\geq1$ in the following cases: 
\begin{itemize}
\item $U$ contains $V_d$ with $d\geq3$ or $(V_2)^{\oplus2}$; 
\item $W$ contains $W_{d,\mu}$ with $d\geq3$ or $W_{2,\mu}\oplus W_{2,\lambda}$; 
\item $U$ contains $V_2$ and $W$ contains $W_{2,\mu}$. 
\end{itemize}
The remaining cases are 
\begin{enumerate}
\item $U=V_2\oplus (V_1)^{\oplus a}$ and $W=\bigoplus_{i}W_{1,\mu_i}$; 
\item $U$ is trivial and $W=W_{2,\mu}\oplus \bigoplus_i W_{1,\mu_i}$. 
\end{enumerate}
In both cases $m$ must be even, say $m=2m'$. 
If $m'=1$, the eigenvalue $-1$ has multiplicity at least $2$ because $g$ is not reflection. 
Then $\Sigma_{V_{\theta}}(g)\geq1$. 
We show that the case $m'>1$ does not occur. 
Consider the restriction to the subgroup $G'=\langle g^2 \rangle\simeq {\Z}/m'$. 
Then $U|_{G'}$ is trivial. 
On the other hand, $W|_{G'}\simeq \bigoplus_i W_{1,2\mu_i}$ in case (1) and 
$W|_{G'}\simeq (W_{1,2\mu})^{\oplus2}\oplus \bigoplus_{i} W_{1,2\mu_i}$ in case (2) (in the sense of restriction in \eqref{eqn:restriction data}). 
By admissibility, we must have $m'=1$. 
\end{proof}

When $G$ contains no reflection, we can apply this lemma to all subgroups $G'$ of $G$ and their generators 
because $\theta|_{G'}$ is admissible for $G'$. 
By the RST criterion we obtain Proposition \ref{prop:admissible rep cano sing} in this case. 

We next consider the case $G$ contains an element $g$ acting as reflection on $V_{\theta}$. 
We may assume $G\ne\langle g\rangle$. 
Let $m''=m/2>1$ be the index of $\langle g \rangle$ in $G$, and $\delta$ a reflective vector of $g$. 
By Lemma \ref{lem:admi rep quasi-reflection}, $m''$ is odd, and $\delta$ is an eigenvector for $G$ contained in $U$ or some $W_{d_i,\mu_i}$. 
We write $\bar{G}<G$ for the subgroup of order $m''$. 
We have the decomposition $G=\bar{G}\oplus \langle g\rangle$ 
and $\bar{G}$ is canonically identified with $G/\langle g\rangle$. 
We set $\bar{V}=V_{\theta}/\langle g\rangle$, which is a $\bar{G}$-representation. 
We have $V_{\theta}/G\simeq \bar{V}/\bar{G}$, and we want to apply the previous step to $(\bar{V}, \bar{G})$. 
Note that $\bar{G}$ cannot contain reflection because its order $m''$ is odd. 

When $\delta\in U$, consider the $G$-decomposition $V_{\theta}=V'\oplus{\C}\delta$. 
By Lemma \ref{lem:admi rep quasi-reflection}, ${\C}\delta\simeq V_2$ as $G$-representation. 
Then as $\bar{G}$-representation 
\begin{equation*}
\bar{V} = V'\oplus ({\C}\delta)^{\otimes2} \simeq V'\oplus V_1 \simeq V_{\theta}. 
\end{equation*}
Since $\theta|_{\bar{G}}$ is admissible for $\bar{G}$, 
$\bar{V}/\bar{G}\simeq V_{\theta}/\bar{G}$ has canonical singularity by the previous step. 

When $\delta\in W_{d_i,\mu_i}$, we have $d_i=2$ by Lemma \ref{lem:admi rep quasi-reflection}. 
Since $W_{2,\mu_i}|_{\bar{G}}\simeq(W_{1,2\mu_i})^{\oplus2}$, then 
$\eta = (U, (d_j, \mu_j)_{j\ne i})|_{\bar{G}}$ must be admissible for $\bar{G}$. 
Hence $\Sigma_{V_{\eta}}(h)\geq1$ for every $h\ne {\rm id} \in \bar{G}$ by Lemma \ref{lem:no reflection case agegeq1}. 
Since $V_{\eta}$ is a direct summand of $\bar{V}$, we have $\Sigma_{\bar{V}}(h)\geq1$. 
Hence $\bar{V}/\bar{G}$ has canonical singularity. 
This finishes the proof of Proposition \ref{prop:admissible rep cano sing}. 
\end{proof}

\subsection{Toroidal compactification}\label{ssec:toroidal cpt 0dim cusp}

We go back to modular varieties and explain toroidal compactification over $0$-dimensional cusp. 
We keep the notation in the beginning of this appendix. 
Let $l\in L$ be a primitive isotropic vector and 
$\mathcal{D}_l = (M_l)_{{\R}}+i(M_l)_{{\R}}^{+}$ the tube domain associated to $l$. 
We \textit{choose} a vector $l'\in L_{{\Q}}$ with $(l, l')=1$ and identify $(M_l)_{{\Q}}$ with $\langle l, l' \rangle^{\perp}\cap L_{{\Q}}$. 
As explained in \S \ref{sec:convention}, this induces the tube domain realization 
\begin{equation*}\label{eqn:tube domain realization}
\iota_{l'} : \mathcal{D}_l \stackrel{\simeq}{\to} {\DL}, \qquad v\mapsto {\C}(l'+v-\frac{1}{2}((v, v)+(l', l'))l), 
\end{equation*}
which depends on $l'$. 
Via this, $U(l)_{\Q}\simeq (M_l)_{{\Q}}$ acts on $\mathcal{D}_l$ by parallel transformation. 
If we form the torus $T_l=(M_l)_{{\C}}/U(l)_{{\Z}}$, 
then $\iota_{l'}^{-1}$ maps $X_l = {\DL}/U(l)_{{\Z}}$ isomorphically to the open set 
$ \mathcal{D}_l/U(l)_{{\Z}} = {\rm ord}^{-1}( (M_l)_{{\R}}^{+})$ of $T_l$. 
The group $\overline{N(l)}_{{\Z}}$ acts on $X_l$ through the $N(l)_{{\Z}}$-action on ${\DL}$. 

The action of $N(l)_{{\Z}}$ on $U(l)_{{\Q}}\simeq (M_l)_{{\Q}}$ preserves the lattice $U(l)_{{\Z}}$. 
Hence if $\pi:N(l)_{{\Q}}\to{\Or}((M_l)_{{\Q}})$ is the natural map, 
$N(l)_{{\Z}}$ is contained in $\pi^{-1}({\Or}(U(l)_{{\Z}}))$, of which $U(l)_{{\Z}}$ is a normal subgroup. 
Thus $\overline{N(l)}_{{\Z}}$ is canonically a subgroup of $\pi^{-1}({\Or}(U(l)_{{\Z}}))/U(l)_{{\Z}}$. 
By \eqref{eqn:non-canonical splitting of N(l)Q}, the splitting 
$L_{{\Q}}=\langle l, l' \rangle_{{\Q}}\oplus (M_l)_{{\Q}}$ 
given by $l'$ induces an isomorphism 
\begin{equation*}
\varphi_{l'} : \pi^{-1}({\Or}(U(l)_{{\Z}}))/U(l)_{{\Z}} \to {\Or}(U(l)_{{\Z}})\ltimes (U(l)_{{\Q}}/U(l)_{{\Z}}). 
\end{equation*}
The right side group is canonically a subgroup of 
\begin{equation*}
{\rm GL}(U(l)_{\Z})\ltimes (U(l)_{{\Q}}/U(l)_{{\Z}}) = {\rm Aut}(T_l)\ltimes (T_l)_{tor} \subset {\rm Aut}(T_l)\ltimes T_l. 
\end{equation*}
We thus obtain an embedding depending on $l'$
\begin{equation*}
\varphi_{l'} : \overline{N(l)}_{{\Z}} \hookrightarrow {\rm Aut}(T_l)\ltimes T_l.  
\end{equation*}
By the definition of $\overline{N(l)}_{\Z}$, the projection $\varphi_{l'}(\overline{N(l)}_{{\Z}})\to{\rm Aut}(T_l)$ is injective. 
If we express $\varphi_{l'}(g)=(\gamma, a)\in{\rm Aut}(T_l)\ltimes T_l$ for $g\in \overline{N(l)}_{\Z}$, 
then $\gamma=\pi(\tilde{g})$ and $a=[\tilde{g}(l')-l']$ 
where $\tilde{g}\in N(l)_{{\Z}}$ is a lift of $g$.

The affine group ${\rm Aut}(T_l)\ltimes T_l$ acts on $T_l$ naturally: 
${\rm Aut}(T_l)$ by torus automorphisms (fixing the identity), and $T_l$ by translation. 
The $\overline{N(l)}_{{\Z}}$-action on $X_l$ 
is the restriction of the action of ${\rm Aut}(T_l)\ltimes T_l$ on $T_l$ through $\varphi_{l'}$ and $\iota_{l'}$.

\begin{remark}\label{rmk:GHS0dimcusp}
In \cite{G-H-S1} p.~534, Gritsenko-Hulek-Sankaran implicitly assume that 
$\varphi_{l'}(\overline{N(l)}_{{\Z}})$ is contained in ${\rm Aut}(T_l)$ for some $l'\in L_{{\C}}$ 
so that the translation component $a=a_g$ is trivial for every $g$. 
If this holds, $N(l)_{{\Z}}$ will decompose into $\overline{N(l)}_{{\Z}}\ltimes U(l)_{{\Z}}$. 
However, this assumption seems to be too strong in general. 
For each $g$, $a_g$ varies holomorphically with $l'$ so that it is not $1$ for generic $l'$, 
and it seems highly nontrivial or even impossible for general $\Gamma$ that one can find a specific $l'$ 
such that $a_g=1$ for all $g$. 
(Note that the isomorphism $\mathcal{D}_{L}(F)\simeq U(F)_{{\C}}$ in loc.~cit.~depends on 
the choice of a base point ${\C}\omega$ of $\mathcal{D}_{L}(F)$. 
This isomorphism is the extension of $\iota_{l'}$, 
and ${\C}\omega$ is another intersection point of ${\proj}\langle l, l'\rangle_{{\C}}$ with the isotropic quadric.) 

On the other hand, in the important example $\Gamma={\Ost}(L)$ with $L$ even, 
$\varphi_{l'}(\overline{N(l)}_{{\Z}})$ is indeed contained in ${\rm Aut}(T_l)$ if $l'$ is taken from $L^{\vee}$. 
Hence in this case the proof of \cite{G-H-S1} works. 
\end{remark}

Now let $\Sigma_l$ be the $\overline{N(l)}_{{\Z}}$-admissible regular fan in $(M_l)_{{\R}}$ we have chosen for $l$. 
This defines a torus embedding $T_l\hookrightarrow T_{\Sigma_l}$. 
The partial compactification $X_{\Sigma_l}$ of $X_l$ in the direction of $l$ is by definition the interior of the closure of $X_l$ in $T_{\Sigma_l}$. 
The group $\overline{N(l)}_{{\Z}}$ acts on $X_{\Sigma_l}$ properly discontinuously. 
We have a natural map 
\begin{equation*}
X_{\Sigma_l}/\overline{N(l)}_{{\Z}} \to \mathcal{F}(\Gamma)^{\Sigma}, 
\end{equation*}
which is locally isomorphic at the points lying over the $0$-dimensional cusp ${\C}l$ (\cite{AMRT} p.~175). 
Hence Theorem \ref{thm:cano sing 0dim cusp} reduces to the following assertion (cp.~\cite{G-H-S1} Theorem 2.17).  

\begin{theorem}\label{thm:cano sing toric}
Let $N$ be a free abelian group of finite rank and $T=T_N$ be the associated torus. 
Let $G$ be a finite subgroup of ${\rm Aut}(T)\ltimes T$ such that $G\to{\rm Aut}(T)$ is injective. 
Let $\Sigma$ be a regular fan in $N_{{\R}}$ preserved by $G$, and $T_{\Sigma}=T_{N,\Sigma}$ the torus embedding defined by $\Sigma$. 
Then $T_{\Sigma}/G$ has canonical singularity. 
\end{theorem}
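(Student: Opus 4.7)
\medskip

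\noindent\textbf{Proof plan.}
The plan is to apply the Reid--Shepherd-Barron--Tai criterion at each point $x\in T_{\Sigma}$, with the stabilizer $G_x<G$ in the role of the finite group. Since $T_\Sigma$ is smooth (because $\Sigma$ is regular), it suffices to show that for every $x$ and every non-quasi-reflection $g\in G_x$, the Reid--Tai sum of $g$ on $T_xT_\Sigma$ is $\geq1$, and that quasi-reflections in $G_x$ are covered by the criterion (automatically so, once we verify they are reflections). The goal is to identify the $\langle g\rangle$-representation $T_xT_\Sigma$ as some $V_\theta$ for an admissible data $\theta$ in the sense of Definition~A.4, whence Proposition~A.6 (via Lemmas~A.4 and A.5) will conclude.

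First I would set up the local toric model. Let $\sigma\in\Sigma$ be the cone whose torus orbit $O_\sigma$ contains $x$. Because $\Sigma$ is regular, primitive generators $v_1,\dots,v_k$ of the rays of $\sigma$ form part of a $\Z$-basis $v_1,\dots,v_n$ of $N$, giving coordinates $z_1,\dots,z_n$ on $U_\sigma\simeq \C^k\times (\C^\times)^{n-k}$ with $x=(0,\dots,0,c_{k+1},\dots,c_n)$, $c_j\in\C^\times$. For $g=(\gamma,a)\in G_x$, the element $\gamma\in\aut(N)$ must permute the rays of $\sigma$, so on the normal coordinates $z_1,\dots,z_k$ the element $g$ acts by $z_i\mapsto\lambda_i z_{\pi(i)}$ for a permutation $\pi$ of $\{1,\dots,k\}$ and scalars $\lambda_i\in\C^\times$ determined by $a$; decomposing $\pi$ into cyclic orbits of lengths $d_1,\dots,d_s$ and applying Example~A.5 to each orbit, this half of $T_xT_\Sigma$ becomes $\bigoplus_j W_{d_j,\mu_j}$ as a $\langle g\rangle$-module, with $\mu_j$ computable from $\prod \lambda_i$ along the orbit. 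The transverse directions along $O_\sigma$ give $U=(N/N_\sigma)\otimes_\Z\C$ as $\langle g\rangle$-module, which is defined over $\Q$ because $\gamma$ acts through $\aut(N/N_\sigma)$. Thus $T_xT_\Sigma\simeq V_\theta$ for $\theta=(U,(d_j,\mu_j)_j)$.

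Next I would verify admissibility of $\theta$ for $\langle g\rangle$. Suppose some $g^m\neq 1$ satisfies $U|_{\langle g^m\rangle}$ trivial and $d_j'=1$ for every $j$, where $d_j'=d_j/(d_j,\,[\langle g\rangle:\langle g^m\rangle])$. Then $\gamma^m$ fixes each $v_i$ for $i\leq k$ (all ray-cycles trivialized) and acts as identity on $N/N_\sigma$, so $\gamma^m=\mathrm{id}$ on $N$. By the injectivity assumption $G\hookrightarrow\aut(T)$ this forces $g^m=1$ in $G$, a contradiction. Hence $\theta$ is admissible. Lemma~A.4 then implies that any quasi-reflection $g\in G_x$ acts as a genuine reflection, and Lemma~A.5 gives $\Sigma_{V_\theta}(g)\geq 1$ whenever $g$ is not a reflection. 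The Reid--Shepherd-Barron--Tai criterion therefore yields that $T_\Sigma/G$ has canonical singularity at the image of $x$; since $x$ was arbitrary, this proves Theorem~A.3, and hence Theorem~A.2.

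The main obstacle is Step~2: pinning down the tangent-space decomposition honestly, in particular the $W_{d_j,\mu_j}$ pieces coming from the simultaneous interaction of the permutation of rays by $\gamma$ and the translational part $a\in T$, and ensuring that the $\mathbb Q$-structure on the transverse part $U$ is not lost under the twist by $a$. Once the correct local picture is extracted, the admissibility check is a short diagram-chase based on the hypothesis that $G$ injects into $\aut(T)$, and the rest is an application of the already-proved linear-algebra Proposition~A.6.
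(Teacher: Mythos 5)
Your proposal is correct and follows essentially the same route as the paper: reduce to the stabilizer and then to cyclic subgroups $\langle g\rangle$, split $T_xT_\Sigma$ into the orbit-tangent part (the $\Q$-rational $\gamma$-action on $(N/N_0)_{\C}$) plus a normal part decomposing into the twisted permutation modules $W_{d_i,\mu_i}$, verify admissibility from the injectivity of $G\to{\rm Aut}(T)$, and invoke Proposition \ref{prop:admissible rep cano sing}. The "main obstacle" you flag (tracking the translation twist $a$ on the normal directions) is exactly what the paper resolves via the fiber trivializations $\varphi_y$, yielding the diagonal-times-cyclic-permutation form of Example \ref{example Wdmu}.
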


In the next subsection we prove this by reducing it to Proposition \ref{prop:admissible rep cano sing}. 
Note that the injectivity condition on $G\to {\rm Aut}(T)$ is essential:  
consider the extreme situation $G\subset T$, where one loses control of the Reid-Tai sum.

\subsection{Proof of Theorem \ref{thm:cano sing toric}}\label{ssec:proof thm A1}

Let $x$ be a point of $T_{\Sigma}$ and $G_x\subset G$ be the stabilizer of $x$. 
It suffices to prove that $T_xT_{\Sigma}/G_x$ has canonical singularity. 
By the well-known cyclic reduction (\cite{Re}, \cite{Ta}), 
this reduces to showing that $T_xT_{\Sigma}/\langle g\rangle$ has canonical singularity for every $g\in G_x$. 
We write $m$ for the order of $g$. 
Let ${\rm orb}(\sigma)$ be the $T$-orbit $x$ belongs to, where $\sigma$ is a regular cone in $\Sigma$. 
Write $g=(\gamma, a)\in{\rm Aut}(T)\ltimes T$. 
Since $g$ preserves ${\rm orb}(\sigma)$, $\gamma$ preserves the cone $\sigma$, permuting its rays. 
The open embedding $T_{\sigma}\hookrightarrow T_{\Sigma}$ is $g$-equivariant, hence 
$T_xT_{\Sigma}=T_xT_{\sigma}$ as $\langle g\rangle$-representation. 
We are thus reduced to showing that $T_xT_{\sigma}/\langle g\rangle$ has canonical singularity. 

Since $g$ has finite order, we have the $g$-decomposition 
\begin{equation*}
T_xT_{\sigma} = T_x({\rm orb}(\sigma)) \oplus N_x({\rm orb}(\sigma)). 
\end{equation*}
Let $N_0={\Z}(\sigma\cap N)$ and $N_1=N/N_0$, which are free $\gamma$-modules. 
We have a natural isomorphism ${\rm orb}(\sigma)\simeq T_{N_1}$ so that $T_x({\rm orb}(\sigma))\simeq (N_1)_{{\C}}$. 
The rays of $\sigma$ define a basis of $N_0$, and $\gamma$ acts on $N_0$ by permuting these basis vectors. 
Let $(d_1, \cdots, d_N)$ be the cyclic type of this permutation ($\sum_i d_i={\rm rk}(N_0)$). 

\begin{proposition}
(1) Via the isomorphism $T_x({\rm orb}(\sigma))\simeq (N_1)_{{\C}}$, 
the $g$-action on $T_x({\rm orb}(\sigma))$ is identified with the $\gamma$-action on $(N_1)_{{\C}}$. 
In particular, it is defined over ${\Q}$. 

(2) As a representation of $\langle g\rangle\simeq{\Z}/m$, 
the normal space $N_x({\rm orb}(\sigma))$ is isomorphic to $\bigoplus_{i=1}^{N}W_{d_i,\mu_i}$ for some $\mu_1, \cdots, \mu_N\in{\Q}/{\Z}$. 

(3) The data $(T_x({\rm orb}(\sigma)), (d_i, \mu_i)_i)$ for $\langle g\rangle\simeq{\Z}/m$ 
is admissible in the sense of Definition \ref{def:admissible data}. 
\end{proposition}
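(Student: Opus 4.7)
The plan is to identify $T_xT_\sigma$ explicitly as a $\langle g\rangle$-representation using the local toric structure at $x$, and then verify the three claims in turn. For (1), the element $g=(\gamma,a)$ acts on $T$ by $t\mapsto\gamma(t)\cdot a$, so on the orbit ${\rm orb}(\sigma)\simeq T_{N_1}$ it acts as $t_1\mapsto\gamma_1(t_1)\cdot a_1$, where $\gamma_1\in{\rm Aut}(N_1)$ is the map induced by $\gamma$ (well-defined because $\gamma$ preserves $N_0$) and $a_1\in T_{N_1}$ is the projection of $a$. Differentiating at the fixed point $x$ kills the translation, leaving the map $\gamma_1\otimes_{\Z}\C$ on $T_x({\rm orb}(\sigma))=(N_1)_{\C}$, which is defined over $\Z$, a fortiori over $\Q$.

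For (2), let $v_1,\ldots,v_d$ be the primitive ray generators of $\sigma$; regularity allows us to extend them to a $\Z$-basis of $N$, so the dual characters $z_i=v_i^{*}\in M:=N^{\vee}$, together with characters dual to a complement of $N_0$, give a local toric isomorphism $T_\sigma\simeq\C^d\times T_{N_1}$ around $x$, with ${\rm orb}(\sigma)=\{z_1=\cdots=z_d=0\}$. Since $\gamma$ preserves $N_0$, we may write $\gamma^{*}v_i^{*}=v_{\pi(i)}^{*}+\eta_i$ in $M$, where $\pi$ is the permutation of the rays of $\sigma$ induced by $\gamma$ and $\eta_i\in M_1:=N_1^{\vee}$. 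The transformation rule $g^{*}\chi^m=\chi^m(a)\cdot\chi^{\gamma^{*}m}$ then gives $g^{*}z_i=v_i^{*}(a)\cdot z_{\pi(i)}\cdot\chi^{\eta_i}$ as functions on $T_\sigma$. Linearizing at $x$, where $z_{\pi(i)}(x)=0$, yields $d(g^{*}z_i)_x=\lambda_i\,dz_{\pi(i),x}$ with $\lambda_i=v_i^{*}(a)\cdot\chi^{\eta_i}(x)\in\C^\times$. Dually, $g$ acts on the normal space $N_x({\rm orb}(\sigma))=\bigoplus_{i=1}^{d}\C\cdot\partial_{z_i}|_x$ as a diagonal matrix times the permutation $\pi^{-1}$. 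Splitting by the cycles of $\pi$ of lengths $d_1,\ldots,d_N$ and applying Example \ref{example Wdmu}, each cycle block of length $d_i$ is isomorphic to $W_{d_i,\mu_i}$ for some $\mu_i\in(1/m)\Z/\Z$, with $d_i\mu_i$ determined mod $\Z$ by the product of the $\lambda$'s around that cycle.

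For (3), suppose some subgroup $G'=\langle g^{m''}\rangle$ with $m'=m/m''>1$ satisfies both: $T_x({\rm orb}(\sigma))|_{G'}$ is trivial, and $d_i'=d_i/(d_i,m'')=1$ for every $i$. The first condition forces $\gamma_1^{m''}={\rm id}$ on $N_1$. The second gives $d_i\mid m''$ for all $i$, so $\pi^{m''}={\rm id}$ and $\gamma^{m''}$ fixes each ray of $\sigma$; since the rays form a $\Z$-basis of $N_0$, $\gamma^{m''}={\rm id}$ on $N_0$. Consequently $\gamma^{m''}-{\rm id}\colon N\to N$ factors as $N\twoheadrightarrow N_1\xrightarrow{\phi}N_0\hookrightarrow N$, so it squares to zero. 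But a unipotent automorphism $u={\rm id}+n$ on a torsion-free $\Z$-module with $n^2=0$ and $u^{m'}={\rm id}$ satisfies ${\rm id}=u^{m'}={\rm id}+m'n$, hence $n=0$ and $\gamma^{m''}={\rm id}$. Then $g^{m''}$ is a pure translation on $T$: nontrivial in $G$ yet trivial in ${\rm Aut}(T)$, contradicting the injectivity of $G\hookrightarrow{\rm Aut}(T)$.

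The main obstacle is the normal-space analysis in (2), which requires checking that the linearization at $x$ of $g^{*}z_i$ really is free of contamination from the tangent-to-orbit directions, so that the normal and tangential summands decouple cleanly as $\langle g\rangle$-representations; this uses essentially both that $z_{\pi(i)}(x)=0$ (to kill the $\eta_i$-derivative contributions) and the regularity of $\sigma$ (to supply the local splitting $T_\sigma\simeq\C^d\times T_{N_1}$). Once (2) is in place, (3) reduces to the algebraic fact that a unipotent automorphism of finite order on a free $\Z$-module is trivial, together with the indispensable injectivity hypothesis $G\hookrightarrow{\rm Aut}(T)$.
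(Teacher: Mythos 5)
Your proof is correct and follows essentially the same route as the paper: (1) via the canonical trivialization of the tangent bundle of the torus (so the translation part differentiates to the identity), (2) by exhibiting the $g$-action on the normal space as a diagonal matrix composed with the ray permutation --- you do this in monomial coordinates using the regularity of $\sigma$, while the paper uses the fiberwise trivializations of $T_{\sigma}\simeq T\times_{T_0}(T_0)_{\sigma}$, but the resulting linear algebra and the appeal to Example \ref{example Wdmu} are identical --- and (3) by the same reduction to the injectivity of $G\to{\rm Aut}(T)$. One genuine improvement: in (3) the paper simply asserts that $\gamma^{m''}$ acting trivially on $N_0$ and on $N_1=N/N_0$ forces $\gamma^{m''}={\rm id}$; your observation that the resulting automorphism is unipotent with square-zero nilpotent part and of finite order on a torsion-free ${\Z}$-module, hence trivial, is exactly the justification that step needs.
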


Theorem \ref{thm:cano sing toric} follows from the assertion (3) and Proposition \ref{prop:admissible rep cano sing}. 

\begin{proof}
We first show that (3) follows from (1) and (2). 
Suppose we have a factorization $m=m'm''$ with $m'\ne1$ and consider the restriction of 
$((N_1)_{{\C}}, (d_i, \mu_i)_i)$ to the subgroup $\langle g^{m''} \rangle\simeq{\Z}/m'$ of  $\langle g\rangle\simeq{\Z}/m$. 
As explained in Example \ref{example Wdmu}, the restriction of the cyclic permutation $(2, \cdots, d_i, 1)$ 
to ${\Z}/m'\subset {\Z}/m$ splits into 
copies of $(2, \cdots, d_i', 1)$ where $d_i'=d_i/(d_i, m'')$. 
Therefore, if $d_i'=1$ for all $1\leq i \leq N$, the $\gamma^{m''}$-action on $N_0$ must be trivial. 
If furthermore $\gamma^{m''}$ acts on $N_1$ trivially, then $\gamma^{m''}={\rm id}$. 
By the injectivity of $\langle g\rangle \to {\rm GL}(N)$, we have $g^{m''}={\rm id}$, so $m'=1$. 
This shows that $((N_1)_{{\C}}, (d_i, \mu_i)_i)$ is admissible. 

We check (1). We write $T_1=T_{N_1}$. 
We have a canonical isomorphism $T_yT_1\simeq (N_1)_{{\C}}$ for every $y\in T_{N_1}$. 
Via this $\gamma :T_xT_1\to T_{\gamma x}T_1$ is identified with $\gamma :(N_1)_{{\C}}\to(N_1)_{{\C}}$, 
and the translation $t_a:T_{\gamma x}T_1\to T_xT_1$ with the identity of $(N_1)_{{\C}}$. 

We verify (2). 
We write $T_0=T_{N_0}$. 
Via the generators of the rays of $\sigma$, $T_0\subset (T_0)_{\sigma}$ is isomorphic to $({\C}^{\times})^r \subset {\C}^r$, 
and $\gamma$ acts on $(T_0)_{\sigma}\simeq{\C}^r$ by permuting the basis vectors. 
We have a canonical isomorphism $T_{\sigma}\simeq T\times_{T_0}(T_0)_{\sigma}$ 
which makes $T_{\sigma}$ a vector bundle over $T_1$ with zero section ${\rm orb}(\sigma)$. 
Let $\pi\colon T_{\sigma} \to T_1\simeq {\rm orb}(\sigma)$ be the projection. 
If $y\in T$, the $\pi$-fiber through $y$ gets isomorphic to $(T_0)_{\sigma}$ by 
\begin{equation*}
\varphi_y : \pi^{-1}(\pi(y)) \to (T_0)_{\sigma}, \qquad [(y, z)]\mapsto z. 
\end{equation*}
This trivialization depends on $y$: 
if we replace $y$ by $y'=b^{-1} y$ where $b\in T_0$, then $\varphi_{y'}\circ \varphi_{y}^{-1}$ acts on $(T_0)_{\sigma}$ 
by the torus action by $b$. 

Now take a point $y\in T$ with $\pi(y)=x$, the fixed point of $g=t_a\circ \gamma$ in question. 
Via $\varphi_y$ and $\varphi_{\gamma y}$ 
the map $\gamma\colon \pi^{-1}(x) \to \pi^{-1}(\gamma x)$ is identified with the permuting action of $\gamma$ on $(T_0)_{\sigma}$, 
and via $\varphi_{\gamma y}$ and $\varphi_y$ 
the map $t_a\colon \pi^{-1}(\gamma x) \to \pi^{-1}(x)$ with the torus action of an element of $T_0$ on $(T_0)_{\sigma}$. 
Via the trivialization $(T_0)_{\sigma}\simeq{\C}^r$, the last action is expressed by a diagonal matrix. 
Hence via $\varphi_y$ and $(T_0)_{\sigma}\simeq{\C}^r$, 
the map $g\colon \pi^{-1}(x)\to \pi^{-1}(x)$ is expressed by a direct sum of linear transformations of the form 
\begin{equation*}
{\rm diag}(e(\alpha_{?}), \cdots, e(\alpha_{?})) \circ (2, 3, \cdots, d_i, 1)
\end{equation*}
over $i=1, \cdots, N$. 
In view of Example \ref{example Wdmu}, this proves our assertion. 
\end{proof}

\subsection{No ramifying boundary divisor}

We keep the notation in \S \ref{ssec:toroidal cpt 0dim cusp}. 
In \cite{G-H-S1}, Gritsenko-Hulek-Sankaran also proved the following. 

\begin{proposition}
The natural projection $X_{\Sigma_l}\to \mathcal{F}(\Gamma)^{\Sigma}$ has no ramification divisor at the boundary. 
\end{proposition}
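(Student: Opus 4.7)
The plan is to show that for each boundary divisor $D$ of $X_{\Sigma_l}$, the stabilizer in $\overline{N(l)}_{\Z}$ of a generic point of $D$ is trivial. Since the natural map $X_{\Sigma_l}/\overline{N(l)}_{\Z} \to \mathcal{F}(\Gamma)^{\Sigma}$ is a local isomorphism at points over the $0$-dimensional cusp, this will imply that the cover $X_{\Sigma_l} \to \mathcal{F}(\Gamma)^{\Sigma}$ is unramified in codimension one along the boundary.

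Boundary divisors of $X_{\Sigma_l}$ correspond to rays $\sigma \in \Sigma_l$. Let $v \in N := U(l)_{\Z}$ be the primitive generator of such a ray, set $N_0 = \Z v$ and $N_1 = N/N_0$; the associated boundary divisor is $D_\sigma = {\rm orb}(\sigma) \simeq T_{N_1}$. Suppose $g \in \overline{N(l)}_{\Z}$ fixes a generic point of $D_\sigma$. Since $g$ has finite order and permutes the boundary divisors, it must preserve $D_\sigma$ setwise, and then fixing a generic point forces $g$ to act as the identity on all of $D_\sigma$.

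Write $g = (\gamma, a) \in {\rm Aut}(T_l) \ltimes T_l$ via $\varphi_{l'}$. Since $\gamma$ preserves the ray $\sigma$ and has finite order, it must fix the primitive generator $v$, so $\gamma$ acts trivially on $N_0$. On the other hand, the action of $g$ on ${\rm orb}(\sigma) \simeq T_{N_1}$ is the affine transformation $(\gamma|_{N_1}, \bar{a})$, where $\bar{a}$ denotes the image of $a$ in $T_{N_1}$; requiring this to be the identity of $T_{N_1}$ forces both $\gamma|_{N_1} = {\rm id}$ on $N_1$ and $\bar{a} = 0$.

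Thus $\gamma - {\rm id}$ sends $N$ into $N_0$ and annihilates $N_0$, giving $(\gamma - {\rm id})^2 = 0$; combined with the finite order of $\gamma$, this forces $\gamma = {\rm id}$. The injectivity of the projection $\varphi_{l'}(\overline{N(l)}_{\Z}) \to {\rm Aut}(T_l)$ noted earlier then yields $g = {\rm id}$. The only delicate ingredient is this last injectivity: it is precisely what disposes of the translational part $a$ once $\gamma$ has been shown to be trivial, and without it the statement would fail, since pure translations in $T_{N_0}\subset T_l$ of appropriate torsion order would act trivially on $D_\sigma$ while being non-identity on $X_{\Sigma_l}$.
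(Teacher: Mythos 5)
Your proof is correct and follows essentially the same route as the paper: reduce to showing no nontrivial $g=(\gamma,a)\in\overline{N(l)}_{\Z}$ fixes a boundary divisor ${\rm orb}(\sigma)$ pointwise, observe that $\gamma$ then acts trivially on both $N_0={\Z}(\sigma\cap N)$ and $N_1=N/N_0$ and that $\bar a$ vanishes, conclude $\gamma={\rm id}$ from finite order, and finish with the injectivity of $\varphi_{l'}(\overline{N(l)}_{\Z})\to{\rm Aut}(T_l)$. You merely make explicit two points the paper leaves implicit, namely the unipotency argument $(\gamma-{\rm id})^2=0$ and the role of that injectivity in disposing of the translation part.
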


This is equivalent to saying that no nontrivial element of $\overline{N(l)}_{{\Z}}$ fixes a boundary divisor of $X_{\Sigma_l}$. 
By the same reason the proof of this assertion also needs to be modified, 
but this is easier than Theorem \ref{thm:cano sing 0dim cusp}. 
It suffices to check the following. 

\begin{lemma}
Let $N$ and $T$ be as in Theorem \ref{thm:cano sing toric}. 
Let $g=(\gamma, a)$ be a finite order element of ${\rm Aut}(T)\ltimes T$ such that $\gamma\ne {\rm id}$. 
Let $\sigma\subset N_{{\R}}$ be a ray fixed by $\gamma$. 
Then the $g$-action on $T_{\sigma}$ does not fix the boundary divisor ${\rm orb}(\sigma)$. 
\end{lemma}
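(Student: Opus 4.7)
The plan is to argue by contradiction: assume that $g = (\gamma, a)$ acts trivially on the boundary divisor ${\rm orb}(\sigma)$, and derive $\gamma = {\rm id}$. Write $v \in N$ for the primitive generator of the ray $\sigma$. Since $\gamma$ preserves $\sigma$ as a cone in $\Sigma$ and $v$ is the unique primitive lattice vector on $\sigma$, one has $\gamma(v) = v$, so $\gamma$ descends to an automorphism $\bar{\gamma}$ of $N_1 := N/{\Z}v$.

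Next I would identify ${\rm orb}(\sigma)$ with the torus $T_{N_1}$ and describe the induced action of $g$ on it. The affine element $g = (\gamma, a)$ descends to the map $z \mapsto \bar{a} \cdot \bar{\gamma}(z)$ on $T_{N_1}$, where $\bar{a} \in T_{N_1}$ is the image of $a$ under $T_N \to T_{N_1}$. Pointwise triviality of this affine-torus action forces $\bar{\gamma} = {\rm id}_{N_1}$ (and incidentally $\bar{a} = 1$, though only the former is needed below).

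The main step is then to promote $\bar{\gamma} = {\rm id}$ together with $\gamma(v) = v$ to $\gamma = {\rm id}$. Setting $c(x) := \gamma(x) - x$, the hypothesis $\bar{\gamma} = {\rm id}$ says that $c$ is a group homomorphism $N \to {\Z}v$, and $\gamma(v) = v$ makes ${\Z}v$ pointwise $\gamma$-fixed; a short induction then yields $\gamma^k(x) - x = k\, c(x)$ for all $k \geq 0$. Since $\gamma$ has finite order and ${\Z}v$ is torsion-free, this forces $c \equiv 0$, hence $\gamma = {\rm id}$, contradicting the standing hypothesis $\gamma \ne {\rm id}$. The argument is essentially formal once the induced action on the boundary divisor is identified correctly; I see no genuine obstacle, and the finite-order assumption on $g$ enters crucially only at this very last step (without it, unipotent automorphisms of $N$ acting trivially on $N_1$ would furnish counterexamples).
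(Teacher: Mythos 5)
Your proof is correct and follows essentially the same route as the paper: identify the induced action on ${\rm orb}(\sigma)\simeq T_{N_1}$ as $t_{\bar a}\circ\bar\gamma$, deduce $\bar\gamma={\rm id}$, and combine with $\gamma(v)=v$ to conclude $\gamma={\rm id}$. Your final step is in fact slightly more careful than the paper's, which simply asserts that triviality on $N_0$ and $N_1$ forces $\gamma={\rm id}$, whereas you correctly note that the finite order of $\gamma$ is needed to rule out a unipotent part.
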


\begin{proof}
Let $N_0={\Z}(\sigma\cap N)$ and $N_1=N/N_0$. 
Via the natural isomorphism ${\rm orb}(\sigma)\simeq T_{N_1}$, 
$g$ acts on ${\rm orb}(\sigma)$ by $t_{\bar{a}}\circ \bar{\gamma}$ 
where $\bar{a}\in T_{N_1}$ is the image of $a$ and $\bar{\gamma}$ is the $\gamma$-action on $N_1$. 
If this was identity, then $\bar{a}=1$ and $\bar{\gamma}={\rm id}$. 
Hence $\gamma$ acts on both $N_0$ and $N_1$ trivially, so $\gamma={\rm id}$. 
\end{proof}


\end{document}